\newcommand{\dsum}{\oplus}
\newcommand{\Dsum}{\bigoplus}
\newcommand{\tensor}{\otimes}
\newcommand{\tk}{\tensor_k}
\newcommand{\tl}{\tensor_\Lambda}
\newcommand{\ts}{\tensor_\Sigma}
\newcommand{\opposite}[1]{#1^{\operatorname{op}}}
\newcommand{\e}[1]{#1^{\operatorname{e}}}
\newcommand{\iso}{\cong}
\newcommand{\equivalence}{\simeq}
\newcommand{\leftmod}[2]{{}_{#1}#2}
\newcommand{\rightmod}[2]{#1_{#2}}
\newcommand{\bimod}[3]{{}_{#1}#2_{#3}}
\newcommand{\HH}[1]{\operatorname{HH}^{#1}}
\newcommand{\fgtext}[1]{\textup{\textrm{\textbf{#1}}}}
\newcommand{\projhom}{\mathscr{P}}
\newcommand{\syzygy}{\Omega}
\newcommand{\into}{\hookrightarrow}
\newcommand{\fcomp}{\circ}
\newcommand{\extcomp}{\circ}
\newcommand{\defterm}[1]{\textit{#1}}
\newcommand{\Dsg}{\mathsf{D}_\mathsf{sg}}
\newcommand{\Db}{\mathsf{D}^\mathsf{b}}
\newcommand{\extring}[3][*]{\mathcal{E}^{#1}_{#2}(#3)}
\DeclareMathOperator{\pd}{pd}
\DeclareMathOperator{\id}{id}
\DeclareMathOperator{\Hom}{Hom}
\DeclareMathOperator{\stHom}{\underline{Hom}}
\DeclareMathOperator{\Ext}{Ext}
\DeclareMathOperator{\fmod}{mod}
\DeclareMathOperator{\stmod}{\underline{mod}}
\DeclareMathOperator{\CM}{CM}
\DeclareMathOperator{\sCM}{\underline{CM}}
\DeclareMathOperator{\im}{im}
\DeclareMathOperator{\perf}{perf}
\DeclareMathOperator{\rad}{rad}
\DeclareMathOperator{\add}{add}
\numberwithin{equation}{section}
\newtheorem{thm}{Theorem}[section]
\newtheorem{cor}[thm]{Corollary}
\newtheorem{lem}[thm]{Lemma}
\newtheorem{prop}[thm]{Proposition}
\theoremstyle{definition}
\newtheorem*{defn}{Definition}
\newtheorem{rem}[thm]{Remark}
\newtheorem{ex}[thm]{Example}
\begin{document}

\title{Singular equivalence and the (Fg) condition}
\author{\O{}ystein Skarts\ae{}terhagen}
\address{Institutt for matematiske fag, NTNU \\ N-7491 Trondheim \\ Norway}
\email{Oystein.Skartsaterhagen@math.ntnu.no}
\date{\today}

\keywords{%
Finite generation condition,
Hochschild cohomology,
singular equivalences}

\subjclass[2010]{%
Primary
16E40, % (Co)homology of rings and algebras
%%%%%%%%  (e.g. Hochschild, cyclic, dihedral, etc.)
16E65, % Homological conditions on rings
18E30; % Derived categories, triangulated categories
Secondary
16G10%%% Representation theory of artinian rings
}

\begin{abstract}
We show that singular equivalences of Morita type with level between
finite-dimensional Gorenstein algebras over a field preserve the
\fgtext{(Fg)}~condition.
\end{abstract}

\maketitle

\setcounter{tocdepth}{1}
\tableofcontents

\section{Introduction}
\label{sec:introduction}

Throughout the paper, we let $k$ be a fixed field.

Support varieties for modules over a group algebra $kG$ were
introduced by J.~F.~Carlson in~\cite{carlson}, using the group
cohomology ring $\operatorname{H}^*(G,k)$.  Later, Snashall and
Solberg~\cite{ss} defined support varieties for modules over an
arbitrary finite-dimensional $k$-algebra $\Lambda$, using the
Hochschild cohomology ring $\HH*(\Lambda)$.

We say that a finite-dimensional $k$-algebra $\Lambda$ satisfies the
\fgtext{(Fg)}~condition if the Hochschild cohomology ring
$\HH*(\Lambda)$ of $\Lambda$ is Noetherian and the Yoneda algebra
$\Ext_\Lambda^*(M,M)$ is a finitely generated $\HH*(\Lambda)$-module
for every finitely generated $\Lambda$-module $M$ (for more details,
see the definition in Section~\ref{sec:fg}).  It was shown
in~\cite{ehsst} that many of the results for support varieties over a
group algebra also hold for support varieties over a selfinjective
algebra which satisfies the \fgtext{(Fg)}~condition.  We can thus think
of the \fgtext{(Fg)}~condition as a criterion for deciding whether a
given algebra has a nice theory of support varieties.

It is therefore interesting to investigate whether the
\fgtext{(Fg)}~condition holds for various algebras, and to find out
which relations between algebras preserve the \fgtext{(Fg)}~condition.
This question has been considered in~\cite{pss} for algebras whose
module categories are related by a recollement of abelian categories,
and in~\cite{kps} for derived equivalence of algebras.  In this paper,
we consider singular equivalence of algebras.

The \defterm{singularity category} $\Dsg(\Lambda)$ of a $k$-algebra
$\Lambda$, introduced by Buchweitz in~\cite{buchweitz}, is defined as
the Verdier quotient
\[
\Dsg(\Lambda) = \Db(\fmod \Lambda)/\perf(\Lambda)
\]
of the bounded derived category $\Db(\fmod \Lambda)$ by the
subcategory of perfect complexes.  This is a triangulated category.
We say that two $k$-algebras $\Lambda$ and~$\Sigma$ are
\defterm{singularly equivalent} if there exists a triangle equivalence
$f\colon \Dsg(\Lambda) \to \Dsg(\Sigma)$ between their singularity
categories, and the functor~$f$ is then called a \defterm{singular
  equivalence} between the algebras $\Lambda$ and~$\Sigma$.

The purpose of this paper is to investigate to what extent singular
equivalences preserve the \fgtext{(Fg)}~condition.  Since arbitrary
singular equivalences are hard to work with and do not necessarily
have nice properties, we restrict our attention to special classes
of singular equivalences.

A \defterm{singular equivalence of Morita type} (introduced by Chen
and Sun in~\cite{chen-sun}) between $k$-algebras $\Lambda$
and~$\Sigma$ is a singular equivalence
\[
\Dsg(\Lambda) \xrightarrow{N \tl -} \Dsg(\Sigma)
\]
which is induced by a tensor functor $N \tl -$, where $N$ is a
$\Sigma$--$\Lambda$ bimodule subject to some technical requirements.
Wang~\cite{wang} has introduced a generalized version of singular
equivalence of Morita type called \defterm{singular equivalence of
  Morita type with level}.  We recall the definitions of these two
types of singular equivalences in Section~\ref{sec:semtl}.  The
question we want to answer in this paper is: Do singular equivalences
of Morita type with level preserve the \fgtext{(Fg)}~condition?

All algebras that satisfy the \fgtext{(Fg)} condition are Gorenstein
algebras (see Theorem~\ref{thm:fg=>gorenstein}), and singular
equivalences of Morita type with level do not preserve Gorensteinness.
Moreover, even if one of the algebras involved in a singular
equivalence of Morita type with level satisfies the
\fgtext{(Fg)}~condition, the other algebra does not need to be a
Gorenstein algebra (see Example~\ref{ex:fg-not-preserved}).  This
means that the \fgtext{(Fg)}~condition is in general not preserved under
singular equivalence of Morita type with level.

However, we can consider the question of whether it is only when one
of the algebras is non-Gorenstein that such counterexamples arise.  In
other words, if we require all our algebras to be Gorenstein, is it
then true that singular equivalences of Morita type with level
preserve the \fgtext{(Fg)}~condition?  The main result of this paper,
Theorem~\ref{thm:main}, answers this question affirmatively: A
singular equivalence of Morita type with level between
finite-dimensional Gorenstein algebras over a field preserves the
\fgtext{(Fg)}~condition.  As a consequence of this, we obtain a
similar statement for \emph{stable} equivalence of Morita type
(Corollary~\ref{cor:stable-equivalence}), where we do not need the
assumption of Gorensteinness.

The content of the paper is structured as follows.

In Section~\ref{sec:semtl}, we state the definitions of singular
equivalence of Morita type and singular equivalence of Morita type
with level, and look at some easily derived consequences.

In Section~\ref{sec:gorenstein-cm}, we begin to look at what more we
can deduce from a singular equivalence of Morita type with level when
the assumption of Gorensteinness is added.  We recall the well-known
result stating that the singularity category of a Gorenstein algebra
is equivalent to the stable category of maximal Cohen--Macaulay
modules.  This implies that a singular equivalence
\[
f\colon \Dsg(\Lambda) \xrightarrow{\equivalence} \Dsg(\Sigma)
\]
between Gorenstein algebras $\Lambda$ and~$\Sigma$ gives
an equivalence
\[
g\colon \sCM(\Lambda) \xrightarrow{\equivalence} \sCM(\Sigma)
\]
between their stable categories of maximal Cohen--Macaulay modules.
We show that if the singular equivalence $f$ is of Morita type with
level, and thus induced by a tensor functor, then the equivalence $g$
is induced by the same tensor functor.

In Section~\ref{sec:rot}, we consider certain maps of the form
\[
\Ext_\Lambda^n(U, V)
\to \Ext_\Lambda^n(\syzygy_\Lambda^i(U), \syzygy_\Lambda^i(V)),
\]
which we call rotation maps.  We show that these maps are isomorphisms
if the algebra~$\Lambda$ is Gorenstein and $n > \id_\Lambda \Lambda$.
This means that in extension groups of sufficiently high degree over a
Gorenstein algebra, we can replace both modules by syzygies.  This
result is used in the following three sections.

In Section~\ref{sec:ext}, we show that if we have a singular
equivalence of Morita type with level
\[
\Dsg(\Lambda) \xrightarrow[N \tl -]{\equivalence} \Dsg(\Sigma)
\]
between two Gorenstein algebras $\Lambda$ and~$\Sigma$, then we have
isomorphisms
\begin{equation}
\label{eqn:intro-ext-iso}
\Ext_\Lambda^n(A, B)
\xrightarrow[N \tl -]{\iso}
\Ext_\Sigma^n(N \tl A, N \tl B)
\qquad
\text{(for $A$ and $B$ in $\fmod \Lambda$)}
\end{equation}
between extension groups over~$\Lambda$ and extension groups
over~$\Sigma$, in all sufficiently large degrees~$n$.  In the
terminology of~\cite{pss}, this implies that a tensor functor inducing
a singular equivalence of Morita type with level between Gorenstein
algebras is an eventually homological isomorphism.  The proof of this
result builds on the result about stable categories of Cohen--Macaulay
modules from Section~\ref{sec:gorenstein-cm}.

In Section~\ref{sec:hh}, we show that a singular equivalence of Morita
type with level between Gorenstein algebras preserves Hochschild
cohomology in almost all degrees.  That is, if two Gorenstein algebras
$\Lambda$ and~$\Sigma$ are singularly equivalent of Morita type with
level, then there are isomorphisms
\begin{equation}
\label{eqn:intro-hh-iso}
\HH{n}(\Lambda) \iso \HH{n}(\Sigma)
\end{equation}
for almost all~$n$, and these isomorphisms respect the ring structure
of the Hochschild cohomology.

In Section~\ref{sec:fg}, we show the main result of the paper: A
singular equivalence of Morita type with level between
finite-dimensional Gorenstein algebras over a field preserves the
\fgtext{(Fg)} condition.  The main ingredients in the proof of this
result are the isomorphism~\eqref{eqn:intro-ext-iso} of extension
groups from Section~\ref{sec:ext} and the
isomorphism~\eqref{eqn:intro-hh-iso} of Hochschild cohomology groups
from Section~\ref{sec:hh}.

\subsection*{Acknowledgments}

I would like to thank \O{}yvind Solberg and Chrysostomos Psaroudakis
for helpful discussions and suggestions.  I would also like to thank
Yiping Chen for informing me about the result stated in
Corollary~\ref{cor:stable-equivalence} and how it follows from the
main result of this paper.

\section{Singular equivalences of Morita type with level}
\label{sec:semtl}

In this section, we recall the definitions we need regarding singular
equivalences.  We begin with the concept of singularity categories.

\begin{defn}
Let $\Lambda$ be a $k$-algebra.  The \defterm{singularity category}
$\Dsg(\Lambda)$ of~$\Lambda$ is a triangulated category defined as the
Verdier quotient
\[
\Dsg(\Lambda) = \Db(\fmod \Lambda)/\perf(\Lambda)
\]
of the bounded derived category $\Db(\fmod \Lambda)$ by the
subcategory of perfect complexes.  We say that two algebras $\Lambda$
and~$\Sigma$ are \defterm{singularly equivalent} if their singularity
categories $\Dsg(\Lambda)$ and $\Dsg(\Sigma)$ are equivalent as
triangulated categories.  A~triangle equivalence between
$\Dsg(\Lambda)$ and $\Dsg(\Sigma)$ is called a \defterm{singular
  equivalence} between the algebras $\Lambda$ and~$\Sigma$.
\end{defn}

The singularity category of an algebra was first defined by Buchweitz
in \cite[Definition~1.2.2]{buchweitz}.  In his definition, the
singularity category is called the \emph{stabilized derived category},
and it is denoted by $\underline{\Db(\Lambda)}$.  Later,
Orlov~\cite{orlov} used the same construction in algebraic geometry to
define the \emph{triangulated category of singularities} of a scheme
$X$, denoted $\mathbf{D}_{Sg}(X)$.  We follow the recent convention of
using Orlov's terminology and notation for algebras as well.  The term
\emph{singular equivalence} was introduced by Chen~\cite{chen}.

Analogously to the special type of stable equivalences called
\emph{stable equivalences of Morita type}, Chen and Sun have defined a
special type of singular equivalences called \emph{singular
  equivalences of Morita type} in their preprint~\cite{chen-sun}.
This concept was further explored by Zhou and Zimmermann in~\cite{zz}.

\begin{defn}
Let $\Lambda$ and~$\Sigma$ be finite-dimensional $k$-algebras, and let
$M$ be a $\Lambda$--$\Sigma$ bimodule and $N$ a $\Sigma$--$\Lambda$
bimodule.  We say that $M$ and~$N$ induce a \defterm{singular
  equivalence of Morita type} between $\Lambda$ and~$\Sigma$ (and that
$\Lambda$ and~$\Sigma$ are \defterm{singularly equivalent of Morita
  type}) if the following conditions are satisfied:
\begin{enumerate}
\item $M$ is finitely generated and projective as a left
$\Lambda$-module and as a right $\Sigma$-module.
\item $N$ is finitely generated and projective as a left
$\Sigma$-module and as a right $\Lambda$-module.
\item There is a finitely generated $\e{\Lambda}$-module $X$ with
finite projective dimension such that $M \ts N \iso \Lambda \dsum X$
as $\e{\Lambda}$-modules.
\item There is a finitely generated $\e{\Sigma}$-module $Y$ with
finite projective dimension such that $N \tl M \iso \Sigma \dsum Y$ as
$\e{\Sigma}$-modules.
\end{enumerate}
\end{defn}

Notice that the definition is precisely the same as the definition of
stable equivalence of Morita type, except that the modules $X$ and~$Y$
are not necessarily projective, but only have finite projective
dimension.  Thus stable equivalences of Morita type occur as a special
case of singular equivalences of Morita type.

The following proposition describes how a singular equivalence of
Morita type is a singular equivalence, thus justifying the name.

\begin{prop}\cite[Proposition 2.3]{zz}
\label{prop:semt-equivalence}
Let $\bimod{\Lambda}{M}{\Sigma}$ and~$\bimod{\Sigma}{N}{\Lambda}$ be
bimodules which induce a singular equivalence of Morita type between
two $k$-algebras $\Lambda$ and $\Sigma$.  Then the functors
\[
N \tl - \colon \Dsg(\Lambda) \to \Dsg(\Sigma)
\qquad\text{and}\qquad
M \ts - \colon \Dsg(\Sigma) \to \Dsg(\Lambda)
\]
are equivalences of triangulated categories, and they are
quasi-inverses of each other.
\end{prop}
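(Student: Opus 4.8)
The plan is to verify that $F := N \tl -$ and $G := M \ts -$ induce well-defined triangle functors on the singularity categories, and then to show that the two composites are isomorphic to the identity functors, using conditions~(3) and~(4).

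First I would record that $F$ and $G$ are exact functors on the module categories: since $N$ is projective, hence flat, as a right $\Lambda$-module and $M$ is projective as a right $\Sigma$-module, the functors $N \tl -$ and $M \ts -$ are exact, and since $N$ and $M$ are finitely generated on the relevant side they preserve finite generation. Hence they induce triangle functors $F\colon \Db(\fmod\Lambda) \to \Db(\fmod\Sigma)$ and $G\colon \Db(\fmod\Sigma) \to \Db(\fmod\Lambda)$. Next I would check that these preserve perfect complexes. Since $\perf(\Lambda)$ is the thick subcategory of $\Db(\fmod\Lambda)$ generated by the stalk complex $\Lambda$, and a triangle functor maps a thick subcategory into the thick subcategory generated by the images of its generators, it is enough that $F(\Lambda) = N$ is perfect over $\Sigma$ and $G(\Sigma) = M$ is perfect over $\Lambda$; this holds because $N$ is finitely generated projective as a left $\Sigma$-module and $M$ is finitely generated projective as a left $\Lambda$-module, by conditions~(2) and~(1). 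Therefore $F$ and $G$ descend to triangle functors $\bar F\colon \Dsg(\Lambda) \to \Dsg(\Sigma)$ and $\bar G\colon \Dsg(\Sigma) \to \Dsg(\Lambda)$.

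The main point is to show $\bar G \bar F \iso \id_{\Dsg(\Lambda)}$; the isomorphism $\bar F \bar G \iso \id_{\Dsg(\Sigma)}$ then follows by the same argument with $\Lambda$ and $\Sigma$ interchanged and condition~(4) in place of~(3). For a bounded complex $C$ of finitely generated $\Lambda$-modules, I would rewrite $GF(C) = M \ts (N \tl C)$; using the one-sided flatness of $N$ over $\Lambda$ and of $M$ over $\Sigma$, each of these underived tensor products represents the corresponding derived one, so by associativity of the derived tensor product there is an isomorphism $GF(C) \iso (M \ts N) \tensor^{\mathbf{L}}_\Lambda C$ in $\Db(\fmod\Lambda)$, natural in $C$ (here $M \ts N$ already computes $M \tensor^{\mathbf{L}}_\Sigma N$, as $M$ is flat over $\Sigma$). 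Now condition~(3) gives $M \ts N \iso \Lambda \dsum X$ as $\e{\Lambda}$-modules with $\pd_{\e{\Lambda}} X < \infty$, whence $GF(C) \iso C \dsum (X \tensor^{\mathbf{L}}_\Lambda C)$, naturally in $C$. It remains to see that $X \tensor^{\mathbf{L}}_\Lambda C$ is a perfect complex of $\Lambda$-modules. Choosing a finite resolution $P_\bullet \to X$ of $\e{\Lambda}$-modules by finitely generated projective bimodules, each $P_i$ is flat as a right $\Lambda$-module, so $X \tensor^{\mathbf{L}}_\Lambda C$ is represented by the bounded total complex of $P_\bullet \tl C$; each term $P_i \tl C_j$ is a finitely generated projective left $\Lambda$-module, since tensoring a finitely generated projective $\e{\Lambda}$-module with a finite-dimensional module over $\Lambda$ produces one. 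Thus $X \tensor^{\mathbf{L}}_\Lambda C$ is a bounded complex of finitely generated projective $\Lambda$-modules, hence perfect, so it becomes a zero object in $\Dsg(\Lambda)$ and $\bar G \bar F(\bar C) \iso \bar C$, naturally. Combined with the symmetric statement, $\bar F$ and $\bar G$ are mutually quasi-inverse triangle equivalences.

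I expect the perfectness of $X \tensor^{\mathbf{L}}_\Lambda C$ to be the only non-formal step: this is precisely where the finite-projective-dimension hypotheses on $X$ and $Y$ are used, together with the elementary fact that a projective bimodule becomes a one-sided projective module after tensoring with a finite-dimensional module. The remaining work — exactness and finiteness of the tensor functors, preservation of perfect complexes, and the passage through the Verdier quotient — is routine bookkeeping.
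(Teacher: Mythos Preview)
Your argument is correct. Note, however, that the paper does not actually supply its own proof of this proposition: it is quoted from \cite[Proposition~2.3]{zz} and stated without proof, so there is no in-paper argument to compare against. Your proof is the standard one and matches what one finds in the cited reference: pass to the derived category using one-sided projectivity, check that perfect complexes are preserved, and then use conditions~(3) and~(4) to see that the composites differ from the identity only by tensoring with a perfect complex. One small simplification you could make: since $M \ts N$ is flat as a right $\Lambda$-module (being the composite of two exact tensor functors applied to $\Lambda$), its direct summand $X$ is automatically flat on the right, so $X \tl C$ already computes $X \tensor^{\mathbf{L}}_\Lambda C$ and you can avoid the explicit bimodule resolution of $X$; the perfectness of $X \tl C$ then follows directly from $\pd_{\e\Lambda} X < \infty$ together with the fact that projective $\e\Lambda$-modules are projective as left $\Lambda$-modules.
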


Inspired by the notion of singular equivalence of Morita type,
Wang~\cite{wang} has defined a more general type of singular
equivalence called \emph{singular equivalence of Morita type with
  level}.

\begin{defn}
Let $\Lambda$ and~$\Sigma$ be finite-dimensional $k$-algebras, and let
$M$ be a $\Lambda$--$\Sigma$ bimodule and $N$ a $\Sigma$--$\Lambda$
bimodule.  Let $l$ be a nonnegative integer.  We say that $M$ and~$N$
induce a \defterm{singular equivalence of Morita type with level~$l$}
between $\Lambda$ and~$\Sigma$ (and that $\Lambda$ and~$\Sigma$ are
\defterm{singularly equivalent of Morita type with level~$l$}) if the
following conditions are satisfied:
\begin{enumerate}
\item $M$ is finitely generated and projective as a left
$\Lambda$-module and as a right $\Sigma$-module.
\item $N$ is finitely generated and projective as a left
$\Sigma$-module and as a right $\Lambda$-module.
\item There is an isomorphism $M \ts N \iso
\syzygy_{\e\Lambda}^l(\Lambda)$ in the stable category $\stmod
\e\Lambda$.
\item There is an isomorphism $N \tl M \iso
\syzygy_{\e\Sigma}^l(\Sigma)$ in the stable category $\stmod
\e\Sigma$.
\end{enumerate}
\end{defn}

Just as in the case of singular equivalence of Morita type, the
conditions in the definition of singular equivalence of Morita type
with level are designed to ensure that the functors $N \tl -$ and $M
\ts -$ induce singular equivalences.

\begin{prop}\cite[Remark~2.2]{wang}
\label{prop:semtl-equivalence}
Let $\bimod{\Lambda}{M}{\Sigma}$ and~$\bimod{\Sigma}{N}{\Lambda}$ be
bimodules which induce a singular equivalence of Morita type with
level~$l$ between two $k$-algebras $\Lambda$ and~$\Sigma$.  Then the
functors
\[
N \tl - \colon \Dsg(\Lambda) \to \Dsg(\Sigma)
\qquad\text{and}\qquad
M \ts - \colon \Dsg(\Sigma) \to \Dsg(\Lambda)
\]
are equivalences of triangulated categories.  The compositions
\[
M \ts N \tl - \colon \Dsg(\Lambda) \to \Dsg(\Lambda)
\qquad\text{and}\qquad
N \tl M \ts - \colon \Dsg(\Sigma) \to \Dsg(\Sigma)
\]
are isomorphic to the shift functor $[-l]$ on the respective
categories $\Dsg(\Lambda)$ and $\Dsg(\Sigma)$.
\end{prop}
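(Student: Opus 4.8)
The plan is to establish three things in order: (i) the tensor functors $N \tl -$ and $M \ts -$ are well defined on the singularity categories; (ii) the composites $M \ts N \tl -$ and $N \tl M \ts -$ are isomorphic to the shift $[-l]$; and (iii) (ii) forces both functors to be triangle equivalences, by a purely formal argument. For~(i): since $M$ is projective as a right $\Sigma$-module (condition~(1)), the functor $M \ts - \colon \fmod \Sigma \to \fmod \Lambda$ is exact and extends termwise to a triangle functor $\Db(\fmod \Sigma) \to \Db(\fmod \Lambda)$; and since $M \ts \Sigma = M$ is projective as a left $\Lambda$-module, this functor sends finitely generated projectives to finitely generated projectives, hence carries $\perf(\Sigma)$ into $\perf(\Lambda)$ and descends to $M \ts - \colon \Dsg(\Sigma) \to \Dsg(\Lambda)$. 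Conditions~(2) and~(1) give $N \tl - \colon \Dsg(\Lambda) \to \Dsg(\Sigma)$ symmetrically.

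For~(ii), the starting point is the natural associativity isomorphism $M \ts (N \tl X) \iso (M \ts N) \tl X$ of $\Lambda$-modules. Fix a finitely generated projective resolution $P_\bullet \to \Lambda$ over $\e\Lambda$; each $P_i$ is finitely generated projective both as a left and as a right $\Lambda$-module (being a summand of a free $\e\Lambda$-module, which is free on both sides), so $\e\Lambda \tl X \iso \Lambda \tk X$ and each $P_i \tl X$ is a projective $\Lambda$-module. Truncating $P_\bullet$ at stage~$l$ and using that $\Lambda$ and the $P_i$ are flat as right $\Lambda$-modules, a dimension-shifting argument shows that $- \tl X$ leaves the truncated exact complex exact; this exhibits $\syzygy^l_{\e\Lambda}(\Lambda) \tl X$ as an $l$th syzygy of $X$, so $\syzygy^l_{\e\Lambda}(\Lambda) \tl X \iso \syzygy^l_\Lambda(X)$ in $\stmod \Lambda$, naturally in~$X$. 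Combined with the stable isomorphism $M \ts N \iso \syzygy^l_{\e\Lambda}(\Lambda)$ of condition~(3) — noting that $- \tl X$ sends projective $\e\Lambda$-modules to projective $\Lambda$-modules and hence descends to a functor $\stmod \e\Lambda \to \stmod \Lambda$, and that projective $\Lambda$-modules vanish in $\Dsg(\Lambda)$ — we get $M \ts N \tl X \iso \syzygy^l_\Lambda(X)$ in $\Dsg(\Lambda)$, naturally in~$X$. Finally, the short exact sequences $0 \to \syzygy_\Lambda X \to P \to X \to 0$ with $P$ projective yield, in $\Dsg(\Lambda)$ where $P \iso 0$, a natural isomorphism $\syzygy_\Lambda \iso [-1]$ and hence $\syzygy^l_\Lambda \iso [-l]$; thus $M \ts N \tl - \iso [-l]$ on $\Dsg(\Lambda)$, and symmetrically $N \tl M \ts - \iso [-l]$ on $\Dsg(\Sigma)$. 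Since $\Dsg(\Lambda)$ is generated as a triangulated category by the modules and all the functors involved are triangle functors, the isomorphism obtained on modules upgrades to one of functors on $\Dsg(\Lambda)$.

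For~(iii), write $F = N \tl -$ and $G = M \ts -$. From $GF \iso [-l]_\Lambda$ and $FG \iso [-l]_\Sigma$, both autoequivalences, a standard formal argument applies: faithfulness of $F$ and of $G$ follows from faithfulness of the composites; $G$ faithful together with $GF$ full gives $F$ full; and $FG$ essentially surjective gives $F$ essentially surjective; so $F$ is a triangle equivalence, with quasi-inverse $[l]_\Sigma \fcomp G$, and likewise $G$. The step I expect to be the main obstacle is the bookkeeping in~(ii): the hypothesis~(3) is only an isomorphism in the stable category $\stmod \e\Lambda$, not an honest bimodule isomorphism, so one must verify carefully that $- \tl X$ is a well-defined functor on stable categories, that it sends projective summands to projective $\Lambda$-modules (which vanish in $\Dsg(\Lambda)$), and that the various syzygy identifications are natural in~$X$ and mutually compatible. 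Note also that $M \ts N$ need not be flat as a right $\Lambda$-module, which is precisely why one argues via~(3) rather than directly with $M \ts N$. Everything else is routine.
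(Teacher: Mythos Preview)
The paper does not actually prove this proposition: it is stated with a citation to \cite[Remark~2.2]{wang} and no proof environment follows. There is therefore no ``paper's own proof'' to compare against; your proposal supplies the argument that the paper takes for granted.

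Your argument is essentially correct and is the standard one. A couple of minor remarks on part~(ii). First, the sentence about $\Dsg(\Lambda)$ being generated by modules is not really needed (and extending natural transformations along triangulated generators is in general delicate). A cleaner route: work in $\Db$ from the start. The truncated resolution gives a triangle $\syzygy^l_{\e\Lambda}(\Lambda)[l-1] \to Q \to \Lambda \to \syzygy^l_{\e\Lambda}(\Lambda)[l]$ in $\Db(\e\Lambda)$ with $Q$ perfect. Since $\Lambda$, $\syzygy^l_{\e\Lambda}(\Lambda)$ and the $P_i$ are all projective as right $\Lambda$-modules, the (underived) functor $-\tl X$ applied termwise sends this to a triangle in $\Db(\Lambda)$ for any complex~$X$, with middle term perfect; passing to $\Dsg(\Lambda)$ yields the natural isomorphism $\syzygy^l_{\e\Lambda}(\Lambda)\tl X \iso X[-l]$ for all objects~$X$ at once. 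Second, your observation that $-\tl X$ takes projective $\e\Lambda$-modules to projective $\Lambda$-modules (so that the stable isomorphism in condition~(3) survives) is exactly the right point; this is what makes the whole argument go through despite $M\ts N$ not being flat on the right.

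Part~(iii) is a routine and correct formal argument.
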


We now show that the notion of singular equivalence of Morita type
with level generalizes the notion of singular equivalence of Morita
type, in the sense that any equivalence of the latter type is also of
the former type.  This is mentioned without proof in~\cite{wang}.

\begin{prop}
\label{prop:semt=>semtl}
Let $\Lambda$ and~$\Sigma$ be finite-dimensional $k$-algebras.  If a
functor $f \colon \Dsg(\Lambda) \to \Dsg(\Sigma)$ is a singular
equivalence of Morita type, then it is also a singular equivalence of
Morita type with level.
\end{prop}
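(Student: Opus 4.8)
The data of the singular equivalence of Morita type consists of bimodules $\bimod{\Lambda}{M}{\Sigma}$ and $\bimod{\Sigma}{N}{\Lambda}$, each projective on both sides, together with an $\e\Lambda$-module $X$ and an $\e\Sigma$-module $Y$ of finite projective dimension such that $M \ts N \iso \Lambda \dsum X$ as $\e\Lambda$-modules and $N \tl M \iso \Sigma \dsum Y$ as $\e\Sigma$-modules; the functor is $f = N \tl - \colon \Dsg(\Lambda) \to \Dsg(\Sigma)$. Put $l = \max\{\pd_{\e\Lambda} X, \pd_{\e\Sigma} Y\}$, a nonnegative integer. The plan is to leave $N$ untouched and replace $M$ by $M' = \syzygy_{\Lambda \tk \opposite\Sigma}^l(M)$, the $l$-th syzygy of $M$ as a module over the finite-dimensional algebra $\Lambda \tk \opposite\Sigma$, and then to prove that $\bimod{\Lambda}{M'}{\Sigma}$ and $\bimod{\Sigma}{N}{\Lambda}$ induce a singular equivalence of Morita type with level $l$. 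Since $N$ is not changed, the induced functor $\Dsg(\Lambda) \to \Dsg(\Sigma)$ is still $f$, so this proves the proposition.

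First I would check that $M'$ satisfies condition~(1), i.e. that it is finitely generated and projective as a left $\Lambda$-module and as a right $\Sigma$-module. The point is that $\Lambda \tk \opposite\Sigma$ is free, hence projective, both as a left $\Lambda$-module and as a right $\Sigma$-module, so any projective $\Lambda \tk \opposite\Sigma$-module restricts to a projective module on each side. Taking a minimal projective resolution of $M$ over $\Lambda \tk \opposite\Sigma$ and cutting it into short exact sequences $0 \to \syzygy^{i+1}(M) \to P_i \to \syzygy^i(M) \to 0$ with $\syzygy^0(M) = M$, an induction on $i$ — at each step the surjection onto the one-sided projective module $\syzygy^i(M)$ splits on that side, so its kernel is again one-sided projective — shows that every $\syzygy_{\Lambda \tk \opposite\Sigma}^i(M)$, and $M'$ in particular, is finitely generated projective as a left $\Lambda$-module and as a right $\Sigma$-module.

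Next I would compute $M' \ts N$ and $N \tl M'$ in the relevant stable categories by dimension shifting along the same short exact sequences. Since each $\syzygy^i(M)$ is projective as a right $\Sigma$-module, these sequences split as sequences of right $\Sigma$-modules and hence stay exact after applying $- \ts N$; and since each $P_i$ is projective over $\Lambda \tk \opposite\Sigma$, the $\e\Lambda$-module $P_i \ts N$ is a direct summand of a direct sum of copies of $\Lambda \tk N$, which is projective over $\e\Lambda$ because $N$ is projective as a right $\Lambda$-module. Iterating therefore gives $M' \ts N \iso \syzygy_{\e\Lambda}^l(M \ts N) \iso \syzygy_{\e\Lambda}^l(\Lambda \dsum X) \iso \syzygy_{\e\Lambda}^l(\Lambda) \dsum \syzygy_{\e\Lambda}^l(X)$ in $\stmod \e\Lambda$, and $\syzygy_{\e\Lambda}^l(X) \iso 0$ there because $l \geq \pd_{\e\Lambda} X$; so $M' \ts N \iso \syzygy_{\e\Lambda}^l(\Lambda)$ in $\stmod \e\Lambda$, which is condition~(3). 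Running the same argument with $N \tl -$ in place of $- \ts N$ — using now that each $\syzygy^i(M)$ is projective as a left $\Lambda$-module and that $N \tl P_i$ is a summand of a direct sum of copies of $N \tk \opposite\Sigma$, which is projective over $\e\Sigma$ because $N$ is projective as a left $\Sigma$-module — yields $N \tl M' \iso \syzygy_{\e\Sigma}^l(N \tl M) \iso \syzygy_{\e\Sigma}^l(\Sigma)$ in $\stmod \e\Sigma$, which is condition~(4).

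The genuinely substantive point underlying both inductions is the same: the one-sided projectivity hypotheses on $M$ and $N$ force every enveloping-algebra syzygy $\syzygy_{\Lambda \tk \opposite\Sigma}^i(M)$ to stay projective on each side, and this is exactly what keeps $- \ts N$ and $N \tl -$ exact on the syzygy sequences and makes the comparison terms $P_i \ts N$ and $N \tl P_i$ projective over $\e\Lambda$ and $\e\Sigma$ respectively. I expect this bookkeeping to be the main obstacle; it is not deep, and once it is in place, choosing $l$ large enough to annihilate $X$ and $Y$ in the stable categories is immediate, and the fact that $f$ is unchanged is automatic since $N$ is never modified.
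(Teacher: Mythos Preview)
Your proposal is correct and follows essentially the same approach as the paper: replace $M$ by its $l$-th bimodule syzygy with $l = \max\{\pd_{\e\Lambda} X, \pd_{\e\Sigma} Y\}$, check one-sided projectivity of the new bimodule, and then verify that tensoring the syzygy sequence with $N$ on either side produces projective resolutions over $\e\Lambda$ and $\e\Sigma$ so that $M' \ts N$ and $N \tl M'$ become $l$-th syzygies of $\Lambda$ and $\Sigma$ in the respective stable categories. The only cosmetic difference is that you justify exactness after tensoring via one-sided splitting of the syzygy sequences, whereas the paper simply invokes that $N$ is projective (hence flat) on the relevant side; both arguments are fine.
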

\begin{proof}
Let $M$, $N$, $X$ and~$Y$ be bimodules satisfying the requirements of
a singular equivalence of Morita type, such that $f = (N \tl -)$.  Let
$l = \max \{ \pd_{\e\Lambda} X, \pd_{\e\Sigma} Y \}$.  Let $M'$ be
an $l$-th syzygy of $M$ as $\Lambda$--$\Sigma$-bimodule, and let
\begin{equation}
\label{eq:M'}
0 \to M' \to P_{l-1} \to \cdots \to P_0 \to M \to 0
\end{equation}
be the beginning of a projective resolution of~$M$.  We show that the
bimodules $M'$ and~$N$ induce a singular equivalence of Morita type
with level~$l$.

If we consider the bimodules in sequence~\eqref{eq:M'} as one-sided
modules (left $\Lambda$-modules or right $\Sigma$-modules), then $M$
and the modules $P_0, \ldots, P_{l-1}$ are projective, and thus $M'$
must be projective as well.  Thus condition~(1) in the definition is
satisfied.  Condition~(2) is trivially satisfied, since it is the same
as condition~(2) in the definition of singular equivalence of Morita
type.

Tensoring sequence~\eqref{eq:M'} with~$N$ gives the sequence
\[
0 \to M' \ts N \to P_{l-1} \ts N \to \cdots \to P_0 \ts N \to M \ts N \to 0.
\]
This sequence is exact since $N$ is projective as left
$\Sigma$-module, and the modules $P_i \ts N$ are projective
$\e\Lambda$-modules since $N$ is projective as right $\Lambda$-module.
The $\e\Lambda$-module $M' \ts N$ is therefore an $l$-th syzygy of $M \ts N$.
Since $M \ts N$ is isomorphic to $\Lambda \dsum X$ and the projective
dimension of $X$ is at most $l$, this means that $M' \ts N$ is an
$l$-th syzygy of $\Lambda$ as $\e\Lambda$-module.  Similarly, we can
show that $N \tl M'$ is an $l$-th syzygy of $\Sigma$ as
$\e\Sigma$-module.  This means that conditions (3)~and~(4) in the
definition are satisfied.
\end{proof}

In the rest of the paper we work with singular equivalences of Morita
type with level.  By the above proposition, all results where we
assume such an equivalence are also applicable to singular
equivalences of Morita type.

As seen above, if $\bimod{\Lambda}{M}{\Sigma}$
and~$\bimod{\Sigma}{N}{\Lambda}$ are bimodules which induce a singular
equivalence of Morita type with level, then the functors $N
\tensor_\Lambda -$ and $M \tensor_\Sigma -$ are equivalences between
the singularity categories of $\Lambda$ and~$\Sigma$.  We end this
section by examining some properties of these tensor functors when
viewed as functors between the module categories $\fmod \Lambda$ and
$\fmod \Sigma$.

\begin{lem}
\label{lem:semtl-functor-properties}
Let $\bimod{\Lambda}{M}{\Sigma}$ and~$\bimod{\Sigma}{N}{\Lambda}$ be
bimodules which induce a singular equivalence of Morita type with
level between two $k$-algebras $\Lambda$ and~$\Sigma$.  Then the
functors
\[
N \tl - \colon \fmod \Lambda \to \fmod \Sigma
\qquad\text{and}\qquad
M \ts - \colon \fmod \Sigma \to \fmod \Lambda
\]
are exact and take projective modules to projective modules.  In
particular, this means that they take projective resolutions to
projective resolutions.
\end{lem}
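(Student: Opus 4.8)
The plan is to verify each of the two asserted properties—exactness and preservation of projectivity—directly from the one-sided projectivity hypotheses on $M$ and $N$, since conditions~(1) and~(2) in the definition of a singular equivalence of Morita type with level are exactly the statements that $\leftmod{\Lambda}{M}$, $\rightmod{M}{\Sigma}$, $\leftmod{\Sigma}{N}$ and $\rightmod{N}{\Lambda}$ are all finitely generated and projective. By symmetry it suffices to treat the functor $N \tl - \colon \fmod \Lambda \to \fmod \Sigma$; the argument for $M \ts -$ is identical with the roles of $\Lambda$ and $\Sigma$ interchanged. Note first that $N \tl -$ does land in $\fmod\Sigma$: for a finitely generated $\Lambda$-module $V$, the $\Sigma$-module $N \tl V$ is a quotient of a finite direct sum of copies of $N$, hence finitely generated.

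For exactness, I would argue as follows. The tensor functor $N \tl -$ is always right exact, so the only thing to check is that it preserves injections, i.e.\ that it is also left exact. Since $N$ is projective—hence flat—as a right $\Lambda$-module, tensoring an exact sequence of left $\Lambda$-modules by $N$ over $\Lambda$ keeps it exact; in particular injections are sent to injections. This gives exactness of $N \tl -$.

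For preservation of projectivity, the key point is that $N$ is finitely generated and projective as a \emph{left} $\Sigma$-module. First, $N \tl \Lambda \iso N$ as a left $\Sigma$-module, which is projective by hypothesis. More generally, for any finite $n$, $N \tl \Lambda^n \iso N^n$ is a finite direct sum of copies of $N$, hence again projective over $\Sigma$. Now an arbitrary finitely generated projective $\Lambda$-module $P$ is a direct summand of some $\Lambda^n$, and since $N \tl -$ is additive it carries this summand decomposition to a decomposition of $N^n$; thus $N \tl P$ is a direct summand of $N^n$, hence finitely generated projective over $\Sigma$.

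The final sentence of the statement is then immediate: given a projective resolution $\cdots \to P_1 \to P_0 \to V \to 0$ of a $\Lambda$-module $V$, applying the exact functor $N \tl -$ yields an exact complex $\cdots \to N \tl P_1 \to N \tl P_0 \to N \tl V \to 0$ in which every $N \tl P_i$ is projective over $\Sigma$, i.e.\ a projective resolution of $N \tl V$. I do not expect any genuine obstacle here: the lemma is essentially bookkeeping that extracts the consequences of the one-sided projectivity hypotheses, and the only thing one must be slightly careful about is invoking the correct side of projectivity for each claim (right-module projectivity of $N$ for flatness/exactness, left-module projectivity of $N$ for preservation of projective objects).
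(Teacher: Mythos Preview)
Your proof is correct and follows essentially the same approach as the paper: exactness of $N \tl -$ comes from $N$ being projective (hence flat) as a right $\Lambda$-module, and preservation of projectives comes from $N$ being projective as a left $\Sigma$-module. The paper's proof is simply a terser version of what you wrote, omitting the routine verifications you spell out.
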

\begin{proof}
Consider the functor $N \tl -$.  This functor is exact
since $N$ is projective as right $\Lambda$-module, and it takes
projective modules to projective modules since $N$ is projective as
left $\Sigma$-module.
\end{proof}

Let $\Lambda$, $\Sigma$, $M$ and~$N$ be as in the above lemma.  Since
the functor
\[
N \tl - \colon \fmod \Lambda \to \fmod \Sigma
\]
is exact, it induces homomorphisms of extension groups.  By abuse of
notation, we denote these maps by $N \tl -$ as well.  More precisely,
for $\Lambda$-modules $U$ and~$V$ and an integer $n \ge 0$, we define
a map
\begin{equation}
\label{eq:ext-map}
N \tl - \colon
\Ext_\Lambda^n(U, V)
\to \Ext_\Sigma^n(N \tl U, N \tl V).
\end{equation}
For $n = 0$, the map $N \tl -$ simply sends a homomorphism $f \colon U
\to V$ to the homomorphism $N \tl f \colon N \tl U \to N \tl V$.  For
$n > 0$, the map $N \tl -$ sends the element represented by the
extension
\[
0 \to V \to E_n \to \cdots \to E_1 \to U \to 0
\]
to the element represented by the extension
\[
0 \to N \tl V \to N \tl E_n \to \cdots \to N \tl E_1 \to N \tl U \to 0
\]
obtained by applying the functor $N \tl -$ to all objects and maps.

The maps~\eqref{eq:ext-map} play an important role later in the paper.
In Section~\ref{sec:ext}, we show that if $\Lambda$ and~$\Sigma$ are
Gorenstein algebras, then these maps are isomorphisms for almost
all~$n$.  This fact is used in the proof of the main theorem
(Theorem~\ref{thm:main}).

\section{Gorenstein algebras and maximal Cohen--Macaulay modules}
\label{sec:gorenstein-cm}

So far, we have considered the situation of two $k$-algebras $\Lambda$
and~$\Sigma$, together with bimodules $\bimod{\Lambda}{M}{\Sigma}$
and~$\bimod{\Sigma}{N}{\Lambda}$ inducing a singular equivalence of
Morita type with level between $\Lambda$ and~$\Sigma$.  From now on,
we restrict our attention to the special case where both $\Lambda$
and~$\Sigma$ are Gorenstein algebras.  In this section, we prove our
first result under this assumption, namely
Proposition~\ref{prop:semtl->sCM}, which states that the tensor
functors $N \tl -$ and $M \ts -$ induce triangle equivalences between
the stable categories of maximal Cohen--Macaulay modules over
$\Lambda$ and~$\Sigma$.

We begin by recalling the definition of Gorenstein algebras.

\begin{defn}
A $k$-algebra $\Lambda$ is a \defterm{Gorenstein algebra} if the
injective dimension of $\Lambda$ as a left $\Lambda$-module is finite
and the injective dimension of $\Lambda$ as a right $\Lambda$-module
is finite:
\[
\id_\Lambda ({}_\Lambda \Lambda) < \infty
\qquad\text{and}\qquad
\id_{\opposite{\Lambda}} (\Lambda_\Lambda) < \infty.
\]
\end{defn}

%% TODO: citation for definition of Gorenstein?

If $\Lambda$ is a Gorenstein algebra, then $\id_\Lambda
(\leftmod{\Lambda}{\Lambda})$ and $\id_{\opposite{\Lambda}}
(\rightmod{\Lambda}{\Lambda})$ are the same number, and this number is
called the \defterm{Gorenstein dimension} of $\Lambda$.  In later
sections, we need the following result about Gorenstein algebras.

\begin{lem}\cite[Lemma~2.1]{bj}
\label{lem:gorenstein-envalg}
If $\Lambda$ is a Gorenstein $k$-algebra with Gorenstein dimension
$d$, then its enveloping algebra $\e\Lambda$ is a Gorenstein algebra
with Gorenstein dimension at most $2d$.
\end{lem}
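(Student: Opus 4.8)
The plan is to bound, separately, the injective dimension of $\e\Lambda$ as a left module over itself and as a right module over itself, in each case by $2d$. For the right-module case, note that $\opposite{(\e\Lambda)} = \opposite{(\Lambda \tk \opposite\Lambda)} \iso \opposite\Lambda \tk \Lambda$, under which $\e\Lambda$ regarded as a right $\e\Lambda$-module corresponds to the regular left module over $\opposite\Lambda \tk \Lambda$. Thus both statements are instances of a single claim about a tensor product of two finite-dimensional $k$-algebras, with the roles of $\Lambda$ and $\opposite\Lambda$ interchanged. It therefore suffices to prove: if $A$ and $B$ are finite-dimensional $k$-algebras, then
\[
\id_{A \tk B}(\leftmod{A \tk B}{A \tk B}) \le \id_A(\leftmod{A}{A}) + \id_B(\leftmod{B}{B}),
\]
and then to apply this with $A = \Lambda$, $B = \opposite\Lambda$, noting that $\id_A(\leftmod{A}{A}) = d$ and $\id_B(\leftmod{B}{B}) = \id_{\opposite\Lambda}(\rightmod{\Lambda}{\Lambda}) = d$ by the definition of Gorenstein dimension.

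The key technical ingredient is that the functor $- \tk -$ sends a pair consisting of an injective left $A$-module and an injective left $B$-module to an injective left $A \tk B$-module. I would prove this using the $k$-linear duality $D = \Hom_k(-, k)$: since $A$, $B$ are Artin algebras, every finitely generated module has a finitely generated injective resolution, and $D$ is a duality exchanging finitely generated projective and injective modules over $A$, over $B$, and over $A \tk B \iso \opposite{(\opposite A \tk \opposite B)}$. Combining the natural isomorphism $D(P \tk Q) \iso DP \tk DQ$ with the elementary fact that a tensor product over $k$ of a projective $\opposite A$-module and a projective $\opposite B$-module is a projective $\opposite{(A \tk B)} \iso \opposite A \tk \opposite B$-module (tensor products of free modules are free, and projectives are summands of frees), one gets that $I \tk J$ is injective over $A \tk B$ whenever $I$, $J$ are injective over $A$, $B$ respectively.

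Granting that, I would finish as follows. Choose finitely generated injective resolutions $0 \to \leftmod{A}{A} \to I^0 \to \cdots \to I^{d_A} \to 0$ and $0 \to \leftmod{B}{B} \to J^0 \to \cdots \to J^{d_B} \to 0$ of lengths $d_A = \id_A(A)$ and $d_B = \id_B(B)$. Since $k$ is a field, $- \tk -$ is exact, so by the Künneth theorem the total complex of $I^\bullet \tk J^\bullet$ (with the standard sign convention), together with the augmentation $A \tk B \iso A \tk B$, is an exact complex $0 \to A \tk B \to \mathrm{Tot}^\bullet \to 0$. Its length is $d_A + d_B$, and its degree-$n$ term is $\Dsum_{p+q=n} I^p \tk J^q$, a finite direct sum of injective $A \tk B$-modules, hence injective. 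Therefore $\id_{A \tk B}(A \tk B) \le d_A + d_B$. Applied with $A = \Lambda$, $B = \opposite\Lambda$ this gives $\id_{\e\Lambda}(\leftmod{\e\Lambda}{\e\Lambda}) \le 2d$, and the interchange of $\Lambda$ and $\opposite\Lambda$ described in the first paragraph gives the same bound on the right, so $\e\Lambda$ is Gorenstein of Gorenstein dimension at most $2d$.

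The main obstacle is precisely the claim that $- \tk -$ preserves injectivity; everything else is a routine total-complex and Künneth computation. This is the step where finite-dimensionality over the field $k$ is genuinely needed (via the duality $D$), so one should resist the temptation to phrase it for arbitrary modules, where it can fail.
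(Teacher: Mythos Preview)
The paper does not prove this lemma; it merely cites it from \cite{bj}. Your argument is correct and is essentially the standard one: reduce to bounding $\id_{A\tk B}(A\tk B)$ by $\id_A A+\id_B B$ for finite-dimensional $k$-algebras $A$, $B$, show via the duality $D=\Hom_k(-,k)$ that tensor products of injectives are injective, and then take the total complex of the tensor product of two finite injective resolutions. The reduction of the right-module case to the left-module case via $\opposite{(\e\Lambda)}\iso\opposite\Lambda\tk\Lambda$ is clean, and your remark that finite-dimensionality over $k$ is where the argument genuinely needs its hypotheses (for the isomorphism $D(P\tk Q)\iso DP\tk DQ$ and for $D$ to be a duality) is well placed.
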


We continue by recalling the definition of maximal Cohen--Macaulay
modules.

\begin{defn}
Let $\Lambda$ be a $k$-algebra.  A finitely generated $\Lambda$-module
$C$ is a \defterm{maximal Cohen--Macaulay module} if
$\Ext_\Lambda^n(C, \Lambda) = 0$ for every positive integer $n$.  We
denote the subcategory of $\fmod \Lambda$ consisting of all maximal
Cohen--Macaulay modules by $\CM(\Lambda)$, and the corresponding
stable category modulo projectives by $\sCM(\Lambda)$.
\end{defn}

In the following lemma, we recall some characterizations of maximal
Cohen--Macaulay modules over Gorenstein algebras.

\begin{lem}
\label{lem:cm<->syzygy}
Let $\Lambda$ be a finite-dimensional Gorenstein $k$-algebra and $C$ a
finitely generated $\Lambda$-module.  The following are equivalent.
\begin{enumerate}
\item $C$ is a maximal Cohen--Macaulay module.
\item $C$ has a projective coresolution.  That is, there exists an
exact sequence
\[
0 \to C \to P_{-1} \to P_{-2} \to \cdots
\]
where every $P_i$ is a projective $\Lambda$-module.
\item For every $n > 0$, there is a $\Lambda$-module $A$ such that $C$
is an $n$-th syzygy of $A$.
\item For some $n \ge \id_\Lambda \Lambda$, there is a
$\Lambda$-module $A$ such that $C$ is an $n$-th syzygy of $A$.
\end{enumerate}
\end{lem}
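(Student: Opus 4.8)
The plan is to establish a cycle of implications, say $(1)\Rightarrow(2)\Rightarrow(3)\Rightarrow(4)\Rightarrow(1)$, with the Gorenstein hypothesis entering only where we need to bound the injective dimension of $\Lambda$. Write $d = \id_\Lambda \Lambda = \id_{\Lambda^{\mathrm{op}}} \Lambda$ for the Gorenstein dimension. First I would do $(1)\Rightarrow(2)$: given that $\Ext_\Lambda^n(C,\Lambda)=0$ for all $n>0$, I want to build the coresolution $0\to C\to P_{-1}\to P_{-2}\to\cdots$ by repeatedly splicing in short exact sequences $0\to C_i\to P_{-i-1}\to C_{i+1}\to 0$. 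The mechanism is the standard one: apply $\Hom_\Lambda(-,\Lambda)$ to a projective presentation, so that $C$ embeds into $\Lambda^m$ via the natural map $C\to C^{**}$ (this requires knowing $C\to C^{**}$ is injective, which follows from $C$ being a submodule of a free module, which in turn follows once we know the cokernel at each stage is again maximal Cohen--Macaulay). The key bookkeeping point is that the cokernel $C_1 = \operatorname{coker}(C\to \Lambda^m)$ is again maximal Cohen--Macaulay: this follows from the long exact sequence in $\Ext_\Lambda(-,\Lambda)$ together with the vanishing hypothesis, so the construction can be iterated. The Gorenstein condition is not actually needed for $(1)\Rightarrow(2)$.

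Next, $(2)\Rightarrow(3)$ is essentially formal: if $0\to C\to P_{-1}\to P_{-2}\to\cdots$ is a projective coresolution, then splicing it with a projective resolution of $C$ itself produces a doubly infinite exact sequence (a complete resolution), and truncating shows that for any $n>0$, $C$ is an $n$-th syzygy of the module $A = \operatorname{coker}(P_{-n}\to P_{-n+1})$ (with the convention $P_0$ a projective cover of $C$ in the appropriate place); one just reads off $0\to C\to P_{-1}\to\cdots\to P_{-n}\to A\to 0$. The implication $(3)\Rightarrow(4)$ is trivial, since $(4)$ merely asks for the existence of such an $A$ for a single value of $n$, which $(3)$ supplies for all of them.

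The implication $(4)\Rightarrow(1)$ is where Gorensteinness does the real work, and I expect it to be the main obstacle. Suppose $C$ is an $n$-th syzygy of some module $A$ with $n\ge d$, so we have an exact sequence $0\to C\to P_{-1}\to\cdots\to P_{-n}\to A\to 0$ with the $P_{-i}$ projective. I want to conclude $\Ext_\Lambda^j(C,\Lambda)=0$ for all $j>0$. Dimension-shifting along this sequence gives $\Ext_\Lambda^j(C,\Lambda)\cong \Ext_\Lambda^{j+n}(A,\Lambda)$ for $j\ge 1$ (the intermediate terms are projective, hence have no higher self-extensions with $\Lambda$). Since $j+n\ge 1+d > d = \id_\Lambda\Lambda$, and $\Lambda$ has injective dimension $d$ as a left module, we get $\Ext_\Lambda^{j+n}(A,\Lambda)=0$ for every finitely generated module $A$. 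Hence $\Ext_\Lambda^j(C,\Lambda)=0$ for all $j>0$, i.e.\ $C$ is maximal Cohen--Macaulay. The only subtlety to check carefully is the precise form of the dimension-shift isomorphism and the requirement that $n\ge d$ versus $n\ge d+1$; one should double-check whether the edge case $n=d$ really goes through, or whether the statement as written needs $n\ge d$ to be read with the degree $j\ge 1$ shift in mind (it does, and it works). Throughout, I would use Lemma~\ref{lem:gorenstein-envalg} nowhere here — it is only the one-sided injective dimension $\id_\Lambda\Lambda$ that matters — and I would phrase everything for finitely generated modules so that syzygies are well-defined via projective covers.
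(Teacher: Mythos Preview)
Your treatment of $(2)\Rightarrow(3)\Rightarrow(4)\Rightarrow(1)$ is fine and agrees with the paper, which simply says these implications are obvious or follow directly from the definitions; your dimension-shift computation for $(4)\Rightarrow(1)$ is exactly what ``directly from the definitions'' unpacks to.

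The gap is in $(1)\Rightarrow(2)$. First, your justification for the injectivity of $C\to C^{**}$ is circular: you say it follows from $C$ being a submodule of a free module, but embedding $C$ into a free module is precisely what you are trying to accomplish. Second, even granting an embedding $0\to C\to P$, the cokernel $C_1$ need not be maximal Cohen--Macaulay for an \emph{arbitrary} such embedding: the long exact sequence gives $\Ext_\Lambda^j(C_1,\Lambda)\iso\Ext_\Lambda^{j-1}(C,\Lambda)=0$ only for $j\ge 2$, while $\Ext_\Lambda^1(C_1,\Lambda)$ is the cokernel of $\Hom_\Lambda(P,\Lambda)\to\Hom_\Lambda(C,\Lambda)$, which has no reason to vanish. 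So the iteration breaks down unless the embedding is chosen carefully. Third, your claim that the Gorenstein hypothesis is not needed for $(1)\Rightarrow(2)$ is unsupported; your argument certainly does not establish it, and the paper's proof uses Gorensteinness here in an essential way.

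The paper handles $(1)\Rightarrow(2)$ by quoting Auslander--Reiten's cotilting theorem: for a cotilting module $T$ one has ${}^\perp T=\mathscr{X}_T$. Since $\Lambda$ is Gorenstein, $\Lambda$ is a cotilting module over itself, and ${}^\perp\Lambda=\CM(\Lambda)$; the equality ${}^\perp\Lambda=\mathscr{X}_\Lambda$ then says precisely that every maximal Cohen--Macaulay module has a coresolution by modules in $\add\Lambda$ with all images again in ${}^\perp\Lambda$. If you want a hands-on argument instead, the standard route is to show that over a Gorenstein algebra the $\Lambda$-dual $C^*=\Hom_\Lambda(C,\Lambda)$ of a maximal Cohen--Macaulay module is again maximal Cohen--Macaulay (over $\opposite{\Lambda}$) and that the biduality map $C\to C^{**}$ is an isomorphism; dualizing a projective resolution of $C^*$ then gives the required coresolution of $C\iso C^{**}$ with maximal Cohen--Macaulay cokernels. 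Both of these facts genuinely use the Gorenstein hypothesis.
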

\begin{proof}
We only need to show that statement~(1) implies statement~(2); the
implications $\text{(2)} \implies \text{(3)} \implies \text{(4)}$ are
obvious, and the implication $\text{(4)} \implies \text{(1)}$ follows
directly from the definitions.

We use Theorem~5.4~(b) from~\cite{contravariantly}.  We first describe
the notation used in~\cite{contravariantly} for certain subcategories
of a module category.

For a $\Lambda$-module $T$ with the property that $\Ext_\Lambda^i(T,T)
= 0$ for every $i>0$, we define the subcategories ${}^{\bot}T$
and~$\mathscr{X}_T$ of $\fmod \Lambda$.  The category ${}^{\bot}T$ is
the subcategory of $\fmod \Lambda$ consisting of all modules $A$ such
that $\Ext_\Lambda^i(A,T) = 0$ for every $i>0$.  The category
$\mathscr{X}_T$ is the subcategory of ${}^{\bot}T$ consisting of all
modules $A$ such that there is an exact sequence
\[
0 \to A
  \to T_0
  \xrightarrow{f_0} T_1
  \xrightarrow{f_1} T_2
  \xrightarrow{f_2} \cdots
\]
where $T_i$ is in $\add T$ and $\im f_i$ is in ${}^{\bot}T$ for every
$i \ge 0$.

Theorem~5.4~(b) in~\cite{contravariantly} says that if $T$ is a
cotilting module, then the categories ${}^{\bot}T$ and~$\mathscr{X}_T$
are equal.

Now consider the case $T = \Lambda$.  Since $\Lambda$ is a Gorenstein
algebra, it is a cotilting module, and then by the above we have
${}^{\bot}\Lambda = \mathscr{X}_\Lambda$.  Furthermore,
${}^{\bot}\Lambda$ is the category $\CM(\Lambda)$ of maximal Cohen--Macaulay modules.
Therefore, every maximal Cohen--Macaulay module is in the category
$\mathscr{X}_\Lambda$, and thus it has a resolution of the form
\[
0 \to C \to P_{-1} \to P_{-2} \to \cdots
\]
where every $P_i$ is a projective $\Lambda$-module.
\end{proof}

We now recall the theorem by Buchweitz which provides the connection
we need between singularity categories and stable categories of
maximal Cohen--Macaulay modules.

\begin{thm}\cite[Theorem~4.4.1]{buchweitz}
\label{thm:sCM-equiv-Dsg}
Let $\Lambda$ be a finite-dimensional Gorenstein algebra.  Then there
is an equivalence of triangulated categories
\[
\sCM(\Lambda) \xrightarrow{\equivalence} \Dsg(\Lambda)
\]
given by sending every object in $\sCM(\Lambda)$ to a stalk complex
concentrated in degree~$0$.
\end{thm}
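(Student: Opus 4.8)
The plan is to study the functor in the statement directly. It is the composite
\[
\CM(\Lambda) \into \fmod\Lambda \to \Db(\fmod\Lambda) \to \Dsg(\Lambda),
\]
in which the middle arrow views a module as a stalk complex in degree~$0$ and the last arrow is the Verdier quotient. Projective $\Lambda$-modules are perfect complexes, so they --- and every morphism factoring through one --- are sent to zero in $\Dsg(\Lambda)$; hence the composite factors through $\sCM(\Lambda)$, yielding the functor $F\colon \sCM(\Lambda) \to \Dsg(\Lambda)$ that we must show is a triangle equivalence. First I would check that $F$ is a triangle functor. The subcategory $\CM(\Lambda) = {}^{\bot}\Lambda$ is closed under extensions, syzygies and cosyzygies (this is implicit in the proof of Lemma~\ref{lem:cm<->syzygy}), and with the exact structure inherited from $\fmod\Lambda$ it is a Frobenius category whose projective--injective objects are exactly the projective $\Lambda$-modules; thus $\sCM(\Lambda)$ is triangulated with shift the cosyzygy functor $\syzygy_\Lambda^{-1}$. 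A short exact sequence of maximal Cohen--Macaulay modules maps to a triangle in $\Db(\fmod\Lambda)$, hence in $\Dsg(\Lambda)$, and applying this to $0 \to C \to P \to \syzygy_\Lambda^{-1}C \to 0$ with $P$ projective gives a natural isomorphism $F(\syzygy_\Lambda^{-1}C) \iso F(C)[1]$; so $F$ is exact.

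Next I would establish essential surjectivity. Any object of $\Dsg(\Lambda)$ is represented by a bounded complex $X$, and using the isomorphisms $\syzygy_\Lambda^{i+1}M \iso \syzygy_\Lambda^i M[-1]$ in $\Dsg(\Lambda)$ (coming from the syzygy sequences $0 \to \syzygy_\Lambda^{i+1}M \to P \to \syzygy_\Lambda^i M \to 0$) together with truncation, one reduces $X$, up to isomorphism in $\Dsg(\Lambda)$, to $M[j]$ for a single module $M$ and an integer~$j$. Picking $n \ge \id_\Lambda\Lambda$ and replacing $M$ by $\syzygy_\Lambda^n M$, which is maximal Cohen--Macaulay by Lemma~\ref{lem:cm<->syzygy}, gives $X \iso \syzygy_\Lambda^n M[j+n]$; and shifts of Cohen--Macaulay modules lie in the image of $F$, since they are $F$ applied to iterated syzygies and cosyzygies. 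Hence $F$ is essentially surjective.

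The real work is full faithfulness: for $C, D \in \CM(\Lambda)$ one must show that
\[
\stHom_\Lambda(C, D) \to \Hom_{\Dsg(\Lambda)}(C, D)
\]
is bijective. This is where the Gorenstein hypothesis is essential, through the same approximation theory used for Lemma~\ref{lem:cm<->syzygy}: every finitely generated $\Lambda$-module $A$ sits in a short exact sequence $0 \to Y \to C_A \to A \to 0$ with $C_A$ maximal Cohen--Macaulay and $\pd_\Lambda Y < \infty$. Using such sequences one straightens an arbitrary roof $C \xleftarrow{s} Z \xrightarrow{g} D$ with perfect cone of~$s$: one first replaces $Z$ by a module, then by a maximal Cohen--Macaulay module mapping to $C$, so that the roof becomes an honest module homomorphism --- this gives surjectivity; and if a module homomorphism $C \to D$ becomes zero in $\Dsg(\Lambda)$ then it factors through a complex of finite projective dimension, which, since $C$ and $D$ are maximal Cohen--Macaulay, can be reorganized into a factorization through a projective module, so the map is zero in $\sCM(\Lambda)$ --- this gives injectivity. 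I expect this last step --- the careful bookkeeping with roofs and with factorizations through finite-projective-dimension objects, all driven by the approximation sequences --- to be the principal obstacle; the rest is formal. This statement is of course Buchweitz's theorem~\cite[Theorem~4.4.1]{buchweitz}, and in the paper one would simply cite it.
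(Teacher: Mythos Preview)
Your final sentence is exactly right: the paper does not prove this statement at all. It is stated as a theorem with the citation \cite[Theorem~4.4.1]{buchweitz} attached and no proof environment following it; the result is imported wholesale from Buchweitz and then used in the proof of Proposition~\ref{prop:semtl->sCM}. So there is nothing to compare your argument against in the paper itself.

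That said, your sketch is a faithful outline of the standard proof. The identification of $\CM(\Lambda)$ as a Frobenius category, the reduction of an arbitrary object of $\Dsg(\Lambda)$ to a shifted stalk of a high syzygy, and the use of the Cohen--Macaulay approximation sequences $0 \to Y \to C_A \to A \to 0$ with $\pd_\Lambda Y < \infty$ to handle full faithfulness are precisely the ingredients Buchweitz uses. Your assessment of where the difficulty lies---the bookkeeping with roofs in the Verdier quotient and the factorization-through-projectives step for injectivity---is accurate. For the purposes of this paper, though, you would indeed just cite the result and move on, exactly as the author does.
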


A direct consequence of Theorem~\ref{thm:sCM-equiv-Dsg} is that if two
finite-dimensional Gorenstein algebras $\Lambda$ and~$\Sigma$ are
singularly equivalent, then the categories $\sCM(\Lambda)$ and
$\sCM(\Sigma)$ are triangle equivalent.  If the algebras are not only
singularly equivalent, but singularly equivalent of Morita type (with
level), then there are tensor functors $N \tl -$ and $M \ts -$ that
induce equivalences between the singularity categories $\Dsg(\Lambda)$
and $\Dsg(\Sigma)$.  What we aim to prove now is that these tensor
functors also induce equivalences between the stable categories
$\sCM(\Lambda)$ and $\sCM(\Sigma)$ of maximal Cohen--Macaulay modules.
We first show that these functors preserve the property of being a
maximal Cohen--Macaulay module.

\begin{lem}
\label{lem:tensor-functor-cm}
Let $\bimod{\Lambda}{M}{\Sigma}$ and~$\bimod{\Sigma}{N}{\Lambda}$ be
bimodules which induce a singular equivalence of Morita type with
level between two finite-dimensional Gorenstein $k$-algebras $\Lambda$
and~$\Sigma$.  Then the functors
\[
N \tl - \colon \fmod \Lambda \to \fmod \Sigma
\qquad\text{and}\qquad
M \ts - \colon \fmod \Sigma \to \fmod \Lambda
\]
send maximal Cohen--Macaulay modules to maximal Cohen--Macaulay
modules.
\end{lem}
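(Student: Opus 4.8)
The plan is to use the characterization of maximal Cohen--Macaulay modules given by Lemma~\ref{lem:cm<->syzygy}, specifically the equivalence $\text{(1)} \Leftrightarrow \text{(4)}$, which says that a module is maximal Cohen--Macaulay over a Gorenstein algebra precisely when it occurs as an $n$-th syzygy for some fixed $n \ge \id_\Lambda \Lambda$. By symmetry, it suffices to treat the functor $N \tl - \colon \fmod \Lambda \to \fmod \Sigma$. So let $C$ be a maximal Cohen--Macaulay $\Lambda$-module; we want to show $N \tl C$ is a maximal Cohen--Macaulay $\Sigma$-module.

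First I would fix an integer $n$ large enough to serve for both algebras, say $n \ge \max\{\id_\Lambda \Lambda, \id_\Sigma \Sigma\}$. By Lemma~\ref{lem:cm<->syzygy}, since $C$ is maximal Cohen--Macaulay over $\Lambda$, there is a $\Lambda$-module $A$ with $C \iso \syzygy_\Lambda^n(A)$, i.e. there is an exact sequence
\[
0 \to C \to P_{n-1} \to \cdots \to P_0 \to A \to 0
\]
with each $P_i$ projective. Now apply the functor $N \tl -$. By Lemma~\ref{lem:semtl-functor-properties}, this functor is exact and sends projectives to projectives, so the resulting sequence
\[
0 \to N \tl C \to N \tl P_{n-1} \to \cdots \to N \tl P_0 \to N \tl A \to 0
\]
is exact with each $N \tl P_i$ a projective $\Sigma$-module. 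Hence $N \tl C$ is an $n$-th syzygy of the $\Sigma$-module $N \tl A$. Since $n \ge \id_\Sigma \Sigma$, Lemma~\ref{lem:cm<->syzygy}~$\text{(4)} \Rightarrow \text{(1)}$ applied over $\Sigma$ tells us that $N \tl C$ is a maximal Cohen--Macaulay $\Sigma$-module. The argument for $M \ts -$ is identical, with the roles of $\Lambda$ and $\Sigma$ interchanged.

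I expect there to be no serious obstacle here: the work has already been done in Lemma~\ref{lem:cm<->syzygy} and Lemma~\ref{lem:semtl-functor-properties}. The only point requiring a little care is that the syzygy bound must be chosen uniformly so that it is valid over both algebras simultaneously; once that is arranged, the proof is just "apply an exact functor preserving projectives to a projective resolution of a module having $C$ as a high syzygy." It is perhaps worth remarking explicitly that this lemma is what makes it sensible to talk about the \emph{restriction} of the induced singularity-category equivalence to the stable categories of Cohen--Macaulay modules, which is the content of the subsequent Proposition~\ref{prop:semtl->sCM}.
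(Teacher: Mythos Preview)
Your proof is correct and follows essentially the same approach as the paper: choose $n = \max\{\id_\Lambda \Lambda, \id_\Sigma \Sigma\}$, use Lemma~\ref{lem:cm<->syzygy} to realize $C$ as an $n$-th syzygy, push the truncated resolution through $N \tl -$ via Lemma~\ref{lem:semtl-functor-properties}, and apply Lemma~\ref{lem:cm<->syzygy} again over~$\Sigma$. The paper's argument is slightly terser but structurally identical.
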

\begin{proof}
Let $n = \max \{ \id_\Lambda \Lambda, \id_\Sigma \Sigma \}$ (this is
finite since the algebras $\Lambda$ and~$\Sigma$ are Gorenstein).  Let
$C$ be a maximal Cohen--Macaulay module over $\Lambda$.  Then by
Lemma~\ref{lem:cm<->syzygy}, there is a $\Lambda$-module $A$ such that
$C$ is an $n$-th syzygy of $A$.  By
Lemma~\ref{lem:semtl-functor-properties}, the $\Sigma$-module $N \tl
C$ is an $n$-th syzygy of $N \tl A$, and therefore by
Lemma~\ref{lem:cm<->syzygy} it is a maximal Cohen--Macaulay module.
\end{proof}

Finally, we are ready to prove the main the result of this section.

\begin{prop}
\label{prop:semtl->sCM}
Let $\bimod{\Lambda}{M}{\Sigma}$ and~$\bimod{\Sigma}{N}{\Lambda}$ be
bimodules which induce a singular equivalence of Morita type with
level between two finite-dimensional Gorenstein $k$-algebras $\Lambda$
and~$\Sigma$.  Then the functors
\[
N \tl - \colon \sCM(\Lambda) \to \sCM(\Sigma)
\qquad\text{and}\qquad
M \ts - \colon \sCM(\Sigma) \to \sCM(\Lambda)
\]
are equivalences of triangulated categories.
\end{prop}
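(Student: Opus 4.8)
The plan is to combine the functor-level facts already established with Buchweitz's equivalence (Theorem~\ref{thm:sCM-equiv-Dsg}) to transport the triangle equivalences from the singularity categories to the stable categories of maximal Cohen--Macaulay modules. First I would note that by Lemma~\ref{lem:tensor-functor-cm}, the functor $N \tl -$ restricts to a functor $\CM(\Lambda) \to \CM(\Sigma)$, and since by Lemma~\ref{lem:semtl-functor-properties} it sends projectives to projectives, it descends to a functor $\sCM(\Lambda) \to \sCM(\Sigma)$; likewise $M \ts -$ descends to $\sCM(\Sigma) \to \sCM(\Lambda)$. These are the functors in the statement.

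Next I would set up the compatibility with Buchweitz's equivalence. Write $\iota_\Lambda \colon \sCM(\Lambda) \xrightarrow{\equivalence} \Dsg(\Lambda)$ for the equivalence of Theorem~\ref{thm:sCM-equiv-Dsg}, which sends a maximal Cohen--Macaulay module to the corresponding stalk complex in degree~$0$, and similarly $\iota_\Sigma$. The key point is that the diagram relating $N \tl -$ on $\sCM$ and $N \tl -$ on $\Dsg$ commutes up to natural isomorphism: for a maximal Cohen--Macaulay $\Lambda$-module $C$, the object $\iota_\Sigma(N \tl C)$ is the stalk complex on $N \tl C$, while $(N \tl -)(\iota_\Lambda(C))$ is the complex obtained by applying $N \tl -$ degreewise to the stalk complex on $C$ — but since $N$ is projective as a right $\Lambda$-module the functor $N \tl -$ is exact (Lemma~\ref{lem:semtl-functor-properties}), so this is again the stalk complex on $N \tl C$. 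Thus $\iota_\Sigma \circ (N \tl -) \iso (N \tl -) \circ \iota_\Lambda$ as functors $\sCM(\Lambda) \to \Dsg(\Sigma)$, and similarly on the other side.

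From this compatibility the conclusion is formal: the functor $N \tl - \colon \Dsg(\Lambda) \to \Dsg(\Sigma)$ is a triangle equivalence by Proposition~\ref{prop:semtl-equivalence}, and $\iota_\Lambda$, $\iota_\Sigma$ are triangle equivalences by Theorem~\ref{thm:sCM-equiv-Dsg}; hence $\iota_\Sigma^{-1} \circ (N \tl -) \circ \iota_\Lambda \colon \sCM(\Lambda) \to \sCM(\Sigma)$ is a triangle equivalence, and by the commuting diagram this composite is naturally isomorphic to the functor $N \tl - \colon \sCM(\Lambda) \to \sCM(\Sigma)$. A functor naturally isomorphic to a triangle equivalence is itself a triangle equivalence (once one checks it is exact, which follows because it is isomorphic to an exact functor), so $N \tl - \colon \sCM(\Lambda) \to \sCM(\Sigma)$ is a triangle equivalence; the same argument applies to $M \ts -$.

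The only genuinely delicate point — and the step I would treat with the most care — is verifying that the square relating the two versions of the tensor functor genuinely commutes up to a \emph{natural} isomorphism, not merely object-wise. This requires knowing precisely how Buchweitz's equivalence $\iota_\Lambda$ is defined on morphisms of $\sCM(\Lambda)$ (not just on objects), and checking that applying $N \tl -$ degreewise is compatible with that definition. In practice this is unproblematic because $\iota_\Lambda$ is induced by the exact inclusion $\CM(\Lambda) \into \fmod \Lambda$ followed by the canonical functor to $\Dsg(\Lambda)$, and $N \tl -$ commutes with all of these constructions by exactness; but it is the place where one must be precise rather than wave hands. An alternative that sidesteps even this is to invoke a general principle: a triangle functor between singularity categories that is induced by an exact functor of module categories preserving projectives automatically restricts, under Buchweitz's equivalence, to the functor induced on Cohen--Macaulay modules. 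Either route gives the result.
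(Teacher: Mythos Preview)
Your proposal is correct and follows essentially the same approach as the paper: establish that $N \tl -$ descends to a functor $\sCM(\Lambda) \to \sCM(\Sigma)$ via Lemma~\ref{lem:tensor-functor-cm} and Lemma~\ref{lem:semtl-functor-properties}, set up the commuting square with the Buchweitz equivalences of Theorem~\ref{thm:sCM-equiv-Dsg}, and conclude from Proposition~\ref{prop:semtl-equivalence} that the top arrow is a triangle equivalence. Your treatment of the commutativity of the square is in fact more careful than the paper's, which simply asserts that the diagram commutes.
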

\begin{proof}
We first check that $N \tl -$ actually gives a functor from
$\sCM(\Lambda)$ to $\sCM(\Sigma)$.  We know from
Lemma~\ref{lem:tensor-functor-cm} that it gives a functor from
$\CM(\Lambda)$ to $\CM(\Sigma)$.  By
Lemma~\ref{lem:semtl-functor-properties}, we see that if $f$ is a map
of $\Lambda$-modules that factors through a projective module, then
the map $N \tl f$ also factors through a projective module.  Thus $N
\tl -$ gives a well-defined functor from $\sCM(\Lambda)$ to
$\sCM(\Sigma)$.

Consider the diagram
\[
\xymatrix@C=5em{
\sCM(\Lambda)
  \ar[r]^{N \tl -}
  \ar[d]^{\equivalence}
&
\sCM(\Sigma)
  \ar[d]^{\equivalence}
\\
\Dsg(\Lambda)
  \ar[r]^{N \tl -}_{\equivalence}
&
\Dsg(\Sigma)
}
\]
of categories and functors, where the vertical functors are the
equivalences from Theorem~\ref{thm:sCM-equiv-Dsg}, and the functor $N
\tl -$ in the bottom row is an equivalence by
Proposition~\ref{prop:semtl-equivalence}.  The diagram commutes, and
therefore the functor $N \tl -$ in the top row is also an
equivalence.
\end{proof}

\section{Rotations of extensions}
\label{sec:rot}

If $U$ and~$V$ are modules over an algebra $\Lambda$, then dimension
shift gives isomorphisms $\Ext_\Lambda^n(U, V) \iso
\Ext_\Lambda^{n-i}(\syzygy_\Lambda^i(U), V)$ for integers $n$ and~$i$
with $n > i > 0$.  If the algebra $\Lambda$ is Gorenstein, then all
projective $\Lambda$-modules have finite injective dimension.  This
means that for sufficiently large~$n$ (more precisely, $n >
\id_\Lambda \Lambda$), we can use projective resolutions to do
dimension shifting in the second argument of Ext as well.  That is, we
have isomorphisms $\Ext_\Lambda^n(U, V) \iso \Ext_\Lambda^{n+i}(U,
\syzygy_\Lambda^i(V))$.  By dimension shifting in both arguments, we
then get isomorphisms
\[
\Ext_\Lambda^n(U, V)
\iso \Ext_\Lambda^n(\syzygy_\Lambda^i(U), \syzygy_\Lambda^i(V)),
\]
where we stay in the same degree~$n$, but replace both arguments to
Ext by their $i$-th syzygies.  In this section, we describe such
isomorphisms, which we call rotation maps, and which are going to be
used several times in later sections.

For defining the rotation maps, we do not need to assume that we are
working over a Gorenstein algebra.  This however means that the maps
are not necessarily isomorphisms.  We first define the maps in a
general setting, and then in Lemma~\ref{lem:rotation} describe the
conditions we need for ensuring that they are isomorphisms.

\begin{defn}
Let $\Lambda$ be a finite-dimensional $k$-algebra, and let $U$ and~$V$
be finitely generated $\Lambda$-modules.  Choose projective
resolutions $\pi \colon \cdots \to P_1 \to P_0 \to U \to 0$ and $\tau
\colon \cdots \to Q_1 \to Q_0 \to V \to 0$ of the modules $U$ and~$V$.
Let $i$ and~$n$ be integers with $i < n$, and let
\begin{align*}
\pi_i &\colon
0 \to \syzygy_\Lambda^i(U) \to P_{i-1} \to \cdots \to P_0 \to U \to 0, \\
\tau_i &\colon
0 \to \syzygy_\Lambda^i(V) \to Q_{i-1} \to \cdots \to Q_0 \to V \to 0
\end{align*}
be truncations of the chosen projective resolutions.  We define the
\defterm{$i$-th rotation} of the extension group $\Ext_\Lambda^n(U,
V)$ with respect to the resolutions $\pi$ and~$\tau$ to be the map
\[
\xymatrix{
{\rho_i \colon \Ext_\Lambda^n(U, V)}
\ar[rr] \ar[dr]_{(\tau_i)_*} &&
{\Ext_\Lambda^n(\syzygy_\Lambda^i(U), \syzygy_\Lambda^i(V))} \\
&
{\Ext_\Lambda^{n+i}(U, \syzygy_\Lambda^i(V))}
\ar[ur]_{(\pi_i^*)^{-1}}
}
\]
given by $\rho_i = (\pi_i^*)^{-1} (\tau_i)_*$.
\end{defn}

Consider the situation in the above definition.  If the algebra
$\Lambda$ is Gorenstein and $n > \id_\Lambda \Lambda$, then for each
of the projective modules~$Q_j$, we have $\id_\Lambda Q_j \le
\id_\Lambda \Lambda < n$, and thus the map $(\tau_i)_*$ is an
isomorphism.  This gives the following result.

\begin{lem}
\label{lem:rotation}
Let $\Lambda$ be a finite-dimensional Gorenstein $k$-algebra, and let
$U$ and~$V$ be finitely generated $\Lambda$-modules.  For every $n >
\id_\Lambda \Lambda$ and every $i < n$, the $i$-th rotation
\[
\rho_i \colon
\Ext_\Lambda^n(U, V) \to
\Ext_\Lambda^n(\syzygy_\Lambda^i(U), \syzygy_\Lambda^i(V))
\]
\textup{(}with respect to any projective resolutions of $U$
and~$V$\textup{)} is an isomorphism.
\end{lem}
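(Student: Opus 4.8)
The plan is to show that both maps in the composition $\rho_i = (\pi_i^*)^{-1}(\tau_i)_*$ defining the rotation are isomorphisms. Recall that $(\tau_i)_*\colon \Ext_\Lambda^n(U,V) \to \Ext_\Lambda^{n+i}(U,\syzygy_\Lambda^i(V))$ is the connecting map induced by the truncated resolution $\tau_i$, and $\pi_i^*\colon \Ext_\Lambda^n(\syzygy_\Lambda^i(U),\syzygy_\Lambda^i(V)) \to \Ext_\Lambda^{n+i}(U,\syzygy_\Lambda^i(V))$ is the map induced by $\pi_i$. The map $\pi_i^*$ is always an isomorphism for $i < n$: this is standard dimension shifting in the first variable, since $\pi_i$ exhibits $\syzygy_\Lambda^i(U)$ as an $i$-th syzygy of $U$ through projective modules, so splicing long exact sequences (equivalently, applying $\Hom$ of a projective resolution and using that $P_0,\dots,P_{i-1}$ are projective) gives $\Ext_\Lambda^{n}(\syzygy_\Lambda^i(U),-) \iso \Ext_\Lambda^{n+i}(U,-)$ for all $n \geq 1$, in particular for $n > \id_\Lambda\Lambda \geq 1$. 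Hence $(\pi_i^*)^{-1}$ makes sense and is an isomorphism without any Gorenstein hypothesis.

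The point where Gorensteinness enters is the claim that $(\tau_i)_*$ is an isomorphism. First I would reduce to the case $i = 1$ by induction: the rotation $\rho_i$ factors as a composite of single-step rotations $\Ext_\Lambda^n(U,V) \to \Ext_\Lambda^n(\syzygy_\Lambda^1(U),\syzygy_\Lambda^1(V)) \to \cdots \to \Ext_\Lambda^n(\syzygy_\Lambda^i(U),\syzygy_\Lambda^i(V))$ (this requires a small compatibility check, that truncating at level $i$ agrees with iterating truncations at level $1$, which is immediate from how the chosen resolution of $\syzygy_\Lambda^j(U)$ is induced from that of $U$), and each factor stays in a degree $> \id_\Lambda\Lambda$ since the degree is unchanged. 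So it suffices to handle $(\tau_1)_*$, which is the connecting homomorphism in the long exact sequence obtained from the short exact sequence $0 \to \syzygy_\Lambda^1(V) \to Q_0 \to V \to 0$ by applying $\Hom_\Lambda(U,-)$:
\[
\Ext_\Lambda^n(U,Q_0) \to \Ext_\Lambda^n(U,V) \xrightarrow{(\tau_1)_*} \Ext_\Lambda^{n+1}(U,\syzygy_\Lambda^1(V)) \to \Ext_\Lambda^{n+1}(U,Q_0).
\]
To conclude that $(\tau_1)_*$ is an isomorphism, I need $\Ext_\Lambda^n(U,Q_0) = 0$ and $\Ext_\Lambda^{n+1}(U,Q_0) = 0$. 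Since $Q_0$ is projective, it is a summand of a finite free module, so $\id_\Lambda Q_0 \leq \id_\Lambda({}_\Lambda\Lambda)$; this is exactly where the Gorenstein hypothesis is used (the Gorenstein dimension is finite, so projectives have finite, uniformly bounded injective dimension). For $n > \id_\Lambda\Lambda$ we then have $n, n+1 > \id_\Lambda Q_0$, hence both outer Ext groups vanish.

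I expect the main subtlety — though it is not really an obstacle — to be the bookkeeping in the inductive reduction to $i = 1$: one must be careful that ``the'' projective resolution of $\syzygy_\Lambda^i(U)$ used in defining the next rotation is precisely the tail of the fixed resolution $\pi$ of $U$, so that the composite of truncation maps equals the single truncation $\pi_i$, and similarly for $V$; this functoriality of the construction in the resolution is what makes the factorization $\rho_i = \rho_1 \circ \cdots \circ \rho_1$ hold on the nose. Once that is set up, the statement follows by combining the unconditional isomorphism $\pi_i^*$ with the vanishing $\Ext_\Lambda^{>\id_\Lambda\Lambda}(-,Q_j) = 0$ for projective $Q_j$, which is the content of the remark immediately preceding the lemma. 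I would phrase the final argument by simply invoking that remark: for $n > \id_\Lambda\Lambda$ each $(\tau_i)_*$ is an isomorphism because $\id_\Lambda Q_j \le \id_\Lambda\Lambda < n$ for every $j$, and $(\pi_i^*)^{-1}$ is an isomorphism by dimension shifting, so their composite $\rho_i$ is an isomorphism.
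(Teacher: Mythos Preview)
Your proposal is correct and takes essentially the same approach as the paper: the paper's proof is precisely the one-line observation you give at the end, namely that $(\pi_i^*)^{-1}$ is an isomorphism by ordinary dimension shifting and $(\tau_i)_*$ is an isomorphism because $\id_\Lambda Q_j \le \id_\Lambda\Lambda < n$ for every projective $Q_j$ appearing in $\tau_i$. Your inductive reduction to $i=1$ is fine but unnecessary, and the small aside ``$\id_\Lambda\Lambda \ge 1$'' is not generally true (e.g.\ for selfinjective $\Lambda$), though this does not affect the argument since $n > \id_\Lambda\Lambda$ already forces $n \ge 1$.
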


If we look at a rotation map of an extension group
$\Ext_\Lambda^n(U,U)$ with the same module in both arguments, then the
action of the map can be viewed as a concrete ``rotation'' of the
extensions, as we will now see.  Let $\pi \colon \cdots \to P_1 \to
P_0 \to U \to 0$ be a projective resolution of~$U$, and consider the
$i$-th rotation map
\[
\rho_i \colon \Ext_\Lambda^n(U, U)
\to \Ext_\Lambda^n(\syzygy_\Lambda^i(U), \syzygy_\Lambda^i(U))
\]
with respect to the resolution $\pi$.  Every element of
$\Ext_\Lambda^n(U,U)$ can be represented by an exact sequence of the
form
\[
\xymatrix@C=2.5ex@R=1ex{
0 \ar[r] &
U \ar[r] &
E \ar[r] &
P_{n-2} \ar[r] &
\cdots \ar[r] &
P_i \ar[rr]\ar[rd] &&
P_{i-1} \ar[r] &
\cdots \ar[r] &
P_0 \ar[r] &
U \ar[r] &
0 \\
&&&&&& \syzygy_\Lambda^i(U) \ar[ur]
}
\]
Applying the map~$\rho_i$ to the element represented by this sequence
produces the element represented by the following sequence:
\[
\xymatrix@C=2.5ex@R=1ex{
0 \ar[r] &
\syzygy_\Lambda^i(U) \ar[r] &
P_{i-1} \ar[r] &
\cdots \ar[r] &
P_0 \ar[rr]\ar[rd] &&
E \ar[r] &
P_{n-2} \ar[r] &
\cdots \ar[r] &
P_i \ar[r] &
\syzygy_\Lambda^i(U) \ar[r] &
0 \\
&&&&& U \ar[ur]
}
\]
We have thus rotated the sequence by removing an $i$-fold sequence
from the right side and moving it to the left side.

\section{Isomorphisms between extension groups} % or Eventually homological isomorphisms
\label{sec:ext}

In this section, we show that if $\Lambda$ and~$\Sigma$ are Gorenstein
algebras which are singularly equivalent of Morita type with level,
then we have isomorphisms between extension groups over $\Lambda$ and
extension groups over $\Sigma$ in sufficiently high degrees.  More
precisely, if $\bimod{\Lambda}{M}{\Sigma}$
and~$\bimod{\Sigma}{N}{\Lambda}$ are bimodules which induce a singular
equivalence of Morita type with level between the algebras $\Lambda$
and~$\Sigma$, then the functor $N \tl -$ induces an isomorphism
\begin{equation}
\label{eq:ext-iso}
\Ext_\Lambda^n(A,B)
\iso
\Ext_\Sigma^n(N \tl A,
              N \tl B)
\end{equation}
for every $n \ge \max\{\id_\Lambda \Lambda, \id_\Sigma \Sigma\}$ and
for any $\Lambda$-modules $A$ and~$B$.  This is stated as
Proposition~\ref{prop:ext-iso}.

To prove this result, we use maximal Cohen--Macaulay modules and the
results from Section~\ref{sec:gorenstein-cm}, as well as the rotation
maps from Section~\ref{sec:rot}.  By
Proposition~\ref{prop:semtl->sCM}, we know that in the setting
described above, we have isomorphisms
\begin{equation}
\label{eq:stHom-iso}
\stHom_\Lambda(C,C') \iso
\stHom_\Sigma(N \tl C,
              N \tl C')
\end{equation}
between stable $\Hom$ groups over $\Lambda$ and~$\Sigma$ for maximal
Cohen--Macaulay $\Lambda$-modules $C$ and~$C'$.
Lemma~\ref{lem:stHom=ext} below relates stable $\Hom$ groups to
extension groups.  Using this and isomorphism~\eqref{eq:stHom-iso}, we
show (Proposition~\ref{prop:ext-iso-cm}) that there are isomorphisms
\[
\Ext_\Lambda^n(C, C') \iso
\Ext_\Sigma^n(N \tl C,
              N \tl C')
\]
for all maximal Cohen--Macaulay modules $C$ and $C'$ and every
positive integer~$n$.  Finally, to arrive at
isomorphism~\eqref{eq:ext-iso} for any $\Lambda$-modules $A$ and~$B$
in Proposition~\ref{prop:ext-iso}, we use
Proposition~\ref{prop:ext-iso-cm} together with two facts about
Gorenstein algebras from earlier sections: all syzygies of
sufficiently high degree are maximal Cohen--Macaulay modules, and by
using a rotation map, we can replace the modules $A$ and~$B$ by their
syzygies.

We begin this section by showing, in the following two lemmas, how
extension groups between maximal Cohen--Macaulay modules can be
described as stable Hom groups.  If $C$ and~$C'$ are maximal
Cohen--Macaulay modules over an algebra $\Lambda$, then we get
(Lemma~\ref{lem:stHom=ext}) an isomorphism
\[
\Ext_\Lambda^n(C, C') \iso \stHom_\Lambda(K_n, C')
\]
for every positive integer~$n$, with $K_n$ an $n$-th syzygy of~$C$.

In fact, it turns out that the conditions on $C$ and~$C'$ can be
relaxed somewhat.  Recall that $C$ being a maximal Cohen--Macaulay
module means that $\Ext_\Lambda^i(C,\Lambda) = 0$ for every positive
integer~$i$.  To get the above isomorphism in degree~$n$, it is
sufficient to assume that $\Ext_\Lambda^n(C,\Lambda) = 0$, and we do
not need to put any assumptions on the module~$C'$.  We use this
weaker assumption in the lemmas.

The following notation is used in the two lemmas.  Given two modules
$A$ and $B$ over an algebra~$\Lambda$, we write $\projhom_\Lambda(A,B)
\subseteq \Hom_\Lambda(A,B)$ for the subspace of $\Hom_\Lambda(A,B)$
consisting of morphisms that factor through a projective module; then
the stable Hom group is $\stHom_\Lambda(A, B) = \Hom_\Lambda(A, B) /
\projhom_\Lambda(A, B)$.

In the first lemma, we consider the special case $n = 1$.

\begin{lem}
\label{lem:stHom=ext1}
Let $\Lambda$ be a finite-dimensional $k$-algebra, and let $A$ and~$C$
be finitely generated $\Lambda$-modules such that $\Ext_\Lambda^1(C,
\Lambda) = 0$.  Let
\[
\eta \colon 0 \to K \xrightarrow{\alpha} P \xrightarrow{\beta} C \to 0.
\]
be a short exact sequence of $\Lambda$-modules with $P$ projective.
Then the sequence
\[
0 \to \projhom_\Lambda(K, A)
  \into \Hom_\Lambda(K, A)
  \xrightarrow{\eta^*} \Ext_\Lambda^1(C, A)
  \to 0
\]
of $k$-vector spaces is exact.
\end{lem}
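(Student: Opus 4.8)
The statement to prove is Lemma~\ref{lem:stHom=ext1}: given $\Ext_\Lambda^1(C,\Lambda) = 0$ and a short exact sequence $\eta: 0 \to K \to P \to C \to 0$ with $P$ projective, the sequence
\[
0 \to \projhom_\Lambda(K,A) \into \Hom_\Lambda(K,A) \xrightarrow{\eta^*} \Ext_\Lambda^1(C,A) \to 0
\]
is exact. The plan is to read this off from the long exact sequence in $\Ext_\Lambda(-,A)$ associated to $\eta$, and then separately identify the kernel of $\eta^*$ with $\projhom_\Lambda(K,A)$.

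First I would write down the long exact sequence obtained by applying $\Hom_\Lambda(-,A)$ to $\eta$:
\[
0 \to \Hom_\Lambda(C,A) \to \Hom_\Lambda(P,A) \xrightarrow{\alpha^*} \Hom_\Lambda(K,A) \xrightarrow{\eta^*} \Ext_\Lambda^1(C,A) \to \Ext_\Lambda^1(P,A) \to \cdots.
\]
Surjectivity of $\eta^*$ is immediate: since $P$ is projective, $\Ext_\Lambda^1(P,A) = 0$, so $\eta^*$ is onto. Exactness at $\Hom_\Lambda(K,A)$ gives $\ker \eta^* = \im \alpha^*$, i.e. $\ker \eta^*$ consists exactly of those $f \colon K \to A$ that extend along $\alpha$ to a map $P \to A$. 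So the whole statement reduces to the identification $\im \alpha^* = \projhom_\Lambda(K,A)$, i.e. a map $K \to A$ factors through a projective module if and only if it factors through $\alpha \colon K \to P$.

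The inclusion $\im \alpha^* \subseteq \projhom_\Lambda(K,A)$ is trivial, since $P$ itself is projective. For the reverse inclusion, suppose $f \colon K \to A$ factors as $K \xrightarrow{s} Q \xrightarrow{t} A$ with $Q$ projective. I want to produce a map $P \to A$ restricting to $f$ along $\alpha$; it suffices to extend $s \colon K \to Q$ to a map $P \to Q$ along $\alpha$, i.e. to show $s \in \im(\alpha^* \colon \Hom_\Lambda(P,Q) \to \Hom_\Lambda(K,Q))$. By the long exact sequence applied to $\eta$ and the module $Q$, the obstruction to extending $s$ lives in $\Ext_\Lambda^1(C,Q)$ (it is the image of $s$ under $\eta^*$), and this obstruction vanishes precisely because $\Ext_\Lambda^1(C,Q) = 0$ — which follows from the hypothesis $\Ext_\Lambda^1(C,\Lambda) = 0$, since $Q$ is a summand of a free module $\Lambda^{(I)}$ and $\Ext_\Lambda^1(C,-)$ commutes with the relevant (finite, as everything is finitely generated) direct sums. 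Hence $s$ extends to $\tilde{s}\colon P \to Q$, and then $t \tilde{s} \colon P \to A$ satisfies $(t\tilde{s})\alpha = t s = f$, so $f \in \im \alpha^*$.

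The main obstacle — really the only non-formal point — is the step showing $\Ext_\Lambda^1(C,Q) = 0$ for an arbitrary projective $Q$ from the hypothesis $\Ext_\Lambda^1(C,\Lambda) = 0$; this needs the finite generation of $C$ so that $Q$ can be taken a summand of a \emph{finite} free module (or, more simply, one observes $Q$ is a direct summand of some $\Lambda^n$ and uses additivity of $\Ext$ in the second variable together with the fact that $\Ext^1$ of a summand is a summand of $\Ext^1$). Everything else is the standard long exact sequence of $\Ext$ together with the definition of $\projhom_\Lambda$. I would present the argument in the order: (1) long exact sequence, giving surjectivity of $\eta^*$ and $\ker\eta^* = \im\alpha^*$; (2) reduce to $\im\alpha^* = \projhom_\Lambda(K,A)$; (3) prove the nontrivial inclusion via the vanishing of $\Ext_\Lambda^1(C,Q)$.
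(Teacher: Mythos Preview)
Your proof is correct and follows essentially the same approach as the paper: apply $\Hom_\Lambda(-,A)$ to $\eta$ to get the long exact sequence, reduce to showing $\im\alpha^* = \projhom_\Lambda(K,A)$, and for the nontrivial inclusion use that $\Ext_\Lambda^1(C,Q)=0$ for any projective $Q$ so that every map $K\to Q$ extends along $\alpha$. You are slightly more explicit than the paper in justifying why $\Ext_\Lambda^1(C,Q)=0$ follows from the hypothesis, but otherwise the arguments coincide.
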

\begin{proof}
By applying the functor $\Hom_\Lambda(-, A)$ to the sequence $\eta$,
we get the exact sequence
\[
0 \to \Hom_\Lambda(C, A)
  \xrightarrow{\beta^*} \Hom_\Lambda(P, A)
  \xrightarrow{\alpha^*} \Hom_\Lambda(K, A)
  \xrightarrow{\eta^*} \Ext_\Lambda^1(C, A)
  \to 0.
\]
From this we obtain the short exact sequence
\[
0 \to \im \alpha^*
  \into \Hom_\Lambda(K, A)
  \xrightarrow{\eta^*} \Ext_\Lambda^1(C, A)
  \to 0.
\]

Now we only need to show that $\im \alpha^* = \projhom_\Lambda(K, A)$.
If a homomorphism $f \colon K \to A$ lies in $\im \alpha^*$, then it
factors through the map $\alpha \colon K \to P$, and since the module
$P$ is projective, this means that $f$ lies in $\projhom_\Lambda(K,
A)$.  We thus have $\im \alpha^* \subseteq \projhom_\Lambda(K, A)$.

For the opposite inclusion, let $Q$ be a projective $\Lambda$-module.
Since we have assumed that $\Ext_\Lambda^1(C, \Lambda) = 0$, we also
have $\Ext_\Lambda^1(C, Q) = 0$.  Then from the long exact sequence
obtained by applying the functor $\Hom_\Lambda(-,Q)$ to the short
exact sequence $\eta$, we see that every homomorphism $g\colon K \to
Q$ factors through the homomorphism $\alpha \colon K \to P$.  Thus
every homomorphism which starts in $K$ and factors through some
projective module, also factors through $\alpha$, and we get
$\projhom_\Lambda(K,A) \subseteq \im \alpha^*$.
\end{proof}

Now we continue to extension groups in arbitrary degree by using the
above lemma and dimension shifting.

\begin{lem}
\label{lem:stHom=ext}
Let $\Lambda$ be a finite-dimensional $k$-algebra, let $A$ and~$C$ be
finitely generated $\Lambda$-modules, and let $n$ be a positive
integer.  Assume that $\Ext_\Lambda^n(C, \Lambda) = 0$.  Let
\[
\pi_n \colon 0 \to K_n \to P_{n-1}
               \to P_{n-2} \to \cdots \to P_1 \to P_0 \to C \to 0
\]
be the beginning of a projective resolution of~$C$ with $K_n$ as the
$n$-th syzygy.  Then the sequence
\[
0 \to \projhom_\Lambda(K_n, A)
  \into \Hom_\Lambda(K_n, A)
  \xrightarrow{\pi_n^*} \Ext_\Lambda^n(C, A)
  \to 0
\]
of $k$-vector spaces is exact, and thus the map $\pi_n^*$ induces an
isomorphism
\[
\overline{\pi_n^*} \colon
\stHom_\Lambda(K_n, A) \xrightarrow{\iso} \Ext_\Lambda^n(C, A).
\]
\end{lem}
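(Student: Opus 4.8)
The plan is to prove the lemma by induction on $n$, using dimension shifting to reduce everything to the case $n = 1$, which is exactly Lemma~\ref{lem:stHom=ext1}.

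For the base case $n = 1$, the sequence $\pi_1$ is itself a short exact sequence $0 \to K_1 \to P_0 \to C \to 0$ with $P_0$ projective, and the assumption $\Ext_\Lambda^1(C,\Lambda) = 0$ is precisely the hypothesis of Lemma~\ref{lem:stHom=ext1} (with $C$ and $A$ as given). That lemma therefore yields the claimed exact sequence, and $\pi_1^*$ induces the isomorphism $\overline{\pi_1^*}$.

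For the inductive step, suppose $n \ge 2$ and that the lemma is known in degree $n-1$. Write $K_1 = \syzygy_\Lambda^1(C)$ for the first syzygy, let $\eta \colon 0 \to K_1 \to P_0 \to C \to 0$ be the first short exact sequence in the resolution, and let $\pi_n' \colon 0 \to K_n \to P_{n-1} \to \cdots \to P_1 \to K_1 \to 0$ be the truncation of the induced projective resolution of $K_1$, which exhibits $K_n$ as the $(n-1)$-st syzygy of $K_1$. First I would observe that dimension shifting along $\eta$ gives $\Ext_\Lambda^{n-1}(K_1,\Lambda) \iso \Ext_\Lambda^n(C,\Lambda) = 0$, using $n-1 \ge 1$ and that $\Ext_\Lambda^{n-1}(P_0,\Lambda) = \Ext_\Lambda^n(P_0,\Lambda) = 0$ since $P_0$ is projective. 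Hence the inductive hypothesis applies with $K_1$ in place of $C$, degree $n-1$, and $(n-1)$-st syzygy $K_n$, giving the exact sequence
\[
0 \to \projhom_\Lambda(K_n, A)
  \into \Hom_\Lambda(K_n, A)
  \xrightarrow{(\pi_n')^*} \Ext_\Lambda^{n-1}(K_1, A)
  \to 0 .
\]
Next, applying $\Hom_\Lambda(-,A)$ to $\eta$ and using $\Ext_\Lambda^{n-1}(P_0,A) = \Ext_\Lambda^n(P_0,A) = 0$ (valid because $n \ge 2$), the connecting homomorphism is an isomorphism $\delta \colon \Ext_\Lambda^{n-1}(K_1,A) \xrightarrow{\iso} \Ext_\Lambda^n(C,A)$. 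The key point is the compatibility $\delta \circ (\pi_n')^* = \pi_n^*$: this holds because $\pi_n$ is the Yoneda splice of $\eta$ with $\pi_n'$, so for $\phi \colon K_n \to A$ we have $\pi_n^*(\phi) = \phi_*[\pi_n] = \phi_*\bigl([\eta]\cdot[\pi_n']\bigr) = [\eta]\cdot\bigl(\phi_*[\pi_n']\bigr) = \delta\bigl((\pi_n')^*(\phi)\bigr)$, by bilinearity of the Yoneda product and the standard identification of the connecting map of $\Ext_\Lambda^*(-,A)$ along $\eta$ with Yoneda splicing by $[\eta] \in \Ext_\Lambda^1(C,K_1)$. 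Combining the displayed exact sequence with the isomorphism $\delta$, the composite $\pi_n^* = \delta \circ (\pi_n')^*$ is a surjection with kernel exactly $\projhom_\Lambda(K_n,A)$, which is the asserted exactness; passing to the quotient $\stHom_\Lambda(K_n,A) = \Hom_\Lambda(K_n,A)/\projhom_\Lambda(K_n,A)$ produces the isomorphism $\overline{\pi_n^*}$.

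I expect the only delicate point to be the compatibility $\delta \circ (\pi_n')^* = \pi_n^*$ in the inductive step. Making this precise requires being careful about the conventions for the Yoneda product and for the direction of the connecting homomorphism, but once those conventions are fixed it is a standard fact; everything else is routine dimension shifting, which is why I would organize the whole argument as a clean induction rather than splicing the resolution by hand.
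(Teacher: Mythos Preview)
Your proof is correct, but it organizes the argument differently from the paper. You split off the first short exact sequence $0 \to K_1 \to P_0 \to C \to 0$ at the bottom of the resolution and proceed by induction on $n$, applying the inductive hypothesis to $K_1$ in degree $n-1$ and then composing with the connecting isomorphism $\delta\colon \Ext_\Lambda^{n-1}(K_1,A) \to \Ext_\Lambda^n(C,A)$. The paper instead splits off the last short exact sequence $\eta\colon 0 \to K_n \to P_{n-1} \to K_{n-1} \to 0$ at the top, applies Lemma~\ref{lem:stHom=ext1} directly to $\eta$ (which is legitimate since dimension shifting gives $\Ext_\Lambda^1(K_{n-1},\Lambda) \iso \Ext_\Lambda^n(C,\Lambda) = 0$), and then uses the single dimension-shift isomorphism $\pi_{n-1}^*\colon \Ext_\Lambda^1(K_{n-1},A) \xrightarrow{\iso} \Ext_\Lambda^n(C,A)$ to conclude. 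The paper's route avoids induction entirely and sidesteps the compatibility check $\delta \circ (\pi_n')^* = \pi_n^*$ that you flag as the delicate point; on the other hand, your inductive framing is perfectly valid and makes the reduction to the $n=1$ case more explicit. Either way the content is the same dimension-shifting mechanism, just cut at opposite ends of the resolution.
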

\begin{proof}
Decompose the sequence $\pi_n$ into two exact sequences
\begin{align*}
\eta \colon & 0 \to K_n \to P_{n-1} \to K_{n-1} \to 0 \\
\text{and}\qquad
\pi_{n-1} \colon & 0 \to K_{n-1} \to P_{n-2} \to P_{n-3} \to \cdots
                     \to P_1 \to P_0 \to C \to 0,
\end{align*}
such that $\pi_n = \eta \extcomp \pi_{n-1}$.  By dimension shifting, we
have an isomorphism
\[
\pi_{n-1}^* \colon \Ext_\Lambda^1(K_{n-1}, A) \xrightarrow{\iso} \Ext_\Lambda^n(C, A).
\]
We observe that $\pi_n^* = \pi_{n-1}^* \fcomp \eta^*$, so the following
diagram is commutative.
\[
\xymatrix{
0 \ar[r] &
\projhom_\Lambda(K_n, A) \ar@{^{(}->}[r] \ar@{=}[d] &
\Hom_\Lambda(K_n, A) \ar[r]^{\eta^*} \ar@{=}[d] &
\Ext_\Lambda^1(K_{n-1}, A) \ar[r] \ar[d]_{\pi_{n-1}^*}^{\iso} &
0 \\
0 \ar[r] &
\projhom_\Lambda(K_n, A) \ar@{^{(}->}[r] &
\Hom_\Lambda(K_n, A) \ar[r]^{\pi_n^*} &
\Ext_\Lambda^n(C, A) \ar[r] &
0
}
\]
By Lemma~\ref{lem:stHom=ext1}, the top row of this diagram is exact.
Since all the vertical maps are isomorphisms, the bottom row is also
exact.
\end{proof}

We now show that we get the isomorphisms we want between extension
groups in the special case where the involved modules are maximal
Cohen--Macaulay modules.  In this case, we get isomorphisms between
extension groups in all positive degrees, while in the general case
which is considered afterwards (Proposition~\ref{prop:ext-iso}), we
only get isomorphisms in almost all degrees.

\begin{prop}
\label{prop:ext-iso-cm}
Let $\Lambda$ and~$\Sigma$ be finite-dimensional Gorenstein algebras
which are singularly equivalent of Morita type with level, and let
$\bimod{\Lambda}{M}{\Sigma}$ and~$\bimod{\Sigma}{N}{\Lambda}$ be
bimodules which induce a singular equivalence of Morita type with
level between $\Lambda$ and~$\Sigma$.  Let $C$ and~$C'$ be maximal
Cohen--Macaulay modules over~$\Lambda$.  Then for every positive
integer~$n$, the map
\[
\Ext_\Lambda^n(C, C')
\xrightarrow{N \tl -}
\Ext_\Sigma^n(N \tl C, N \tl C')
\]
is an isomorphism.
\end{prop}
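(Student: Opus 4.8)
The plan is to reduce the statement about $\Ext^n$ for all positive $n$ to the degree-one fact that the tensor functor induces isomorphisms on stable $\Hom$ groups between maximal Cohen--Macaulay modules, which we already have from Proposition~\ref{prop:semtl->sCM} in the form of isomorphism~\eqref{eq:stHom-iso}. First I would fix a positive integer $n$ and choose the beginning of a projective resolution $\pi_n\colon 0\to K_n\to P_{n-1}\to\cdots\to P_0\to C\to 0$ of $C$ over $\Lambda$, so that $K_n$ is an $n$-th syzygy of $C$. Since $C$ is maximal Cohen--Macaulay we have $\Ext_\Lambda^n(C,\Lambda)=0$, so Lemma~\ref{lem:stHom=ext} applies and gives an isomorphism $\overline{\pi_n^*}\colon \stHom_\Lambda(K_n,C')\xrightarrow{\iso}\Ext_\Lambda^n(C,C')$. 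By Lemma~\ref{lem:semtl-functor-properties}, applying $N\tl-$ to $\pi_n$ yields the beginning of a projective resolution of $N\tl C$ over $\Sigma$, with $n$-th syzygy $N\tl K_n$; and since $N\tl C$ is maximal Cohen--Macaulay (Lemma~\ref{lem:tensor-functor-cm}), Lemma~\ref{lem:stHom=ext} again gives an isomorphism $\overline{(N\tl\pi_n)^*}\colon \stHom_\Sigma(N\tl K_n, N\tl C')\xrightarrow{\iso}\Ext_\Sigma^n(N\tl C, N\tl C')$.

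Next I would assemble these into a commutative square. The left vertical map is $\overline{\pi_n^*}$, the right vertical map is $\overline{(N\tl\pi_n)^*}$, the top horizontal map is the map $N\tl-\colon \stHom_\Lambda(K_n,C')\to\stHom_\Sigma(N\tl K_n,N\tl C')$ induced on stable Hom by the functor, and the bottom horizontal map is the Ext map $N\tl-$ from~\eqref{eq:ext-map}. Commutativity of this square should be a routine verification: chasing a homomorphism $f\colon K_n\to C'$, the composite $\overline{(N\tl\pi_n)^*}$ of $N\tl f$ is the class of the Yoneda extension obtained by splicing $N\tl f$ with $N\tl\pi_n$, which is exactly $N\tl-$ applied to the class $\overline{\pi_n^*}(f)$ of the splice of $f$ with $\pi_n$, since the functor $N\tl-$ commutes with splicing of exact sequences (it is exact). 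Since $\Lambda$-modules $C, C'$ are maximal Cohen--Macaulay, so the top map $N\tl-$ on stable Hom is precisely the isomorphism~\eqref{eq:stHom-iso} coming from Proposition~\ref{prop:semtl->sCM} (noting $K_n$ is maximal Cohen--Macaulay by Lemma~\ref{lem:cm<->syzygy}, being a syzygy). With three sides of the square isomorphisms, the fourth side---the Ext map---is an isomorphism, as desired.

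I expect the main obstacle to be the bookkeeping around the commutativity of the square and making sure the identifications are the right ones: one must check that the isomorphism $\sCM(\Lambda)\to\sCM(\Sigma)$ of Proposition~\ref{prop:semtl->sCM} is genuinely induced by the functor $N\tl-$ on stable Hom sets (so that the top map really is the stable-Hom isomorphism and not merely some abstract isomorphism), and that the passage through $\Dsg$ in the proof of that proposition does not obstruct this---but this is exactly what the commuting diagram in the proof of Proposition~\ref{prop:semtl->sCM} delivers, since the vertical equivalences there send a module to its stalk complex and hence identify $\stHom$ with $\Hom$ in $\Dsg$ compatibly with $N\tl-$. The rest is formal diagram chasing together with the two applications of Lemma~\ref{lem:stHom=ext}.
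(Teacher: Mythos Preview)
Your proposal is correct and follows essentially the same argument as the paper: both reduce to stable Hom via Lemma~\ref{lem:stHom=ext}, build the same commutative square with $\overline{\pi_n^*}$ and $\overline{(N\tl\pi_n)^*}$ as vertical isomorphisms and $N\tl-$ as horizontal maps, and invoke Proposition~\ref{prop:semtl->sCM} for the top map. The paper's proof is just a terser version of what you wrote, omitting the explicit commutativity check and the discussion of potential obstacles.
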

\begin{proof}
The idea is to translate the two Ext groups to stable Hom groups by
using Lemma~\ref{lem:stHom=ext}, and then use the equivalence of
stable categories of Cohen--Macaulay modules from
Proposition~\ref{prop:semtl->sCM}.

Let
\[
\pi_n \colon 0 \to K_n \to P_{n-1} \to \cdots \to P_0 \to C \to 0
\]
be the beginning of a projective resolution of~$C$ with $K_n$ as
$n$-th syzygy.  By Lemma~\ref{lem:semtl-functor-properties}, the
sequence $N \tl \pi_n$, which is obtained by applying the functor $N \tl
-$ to all objects and maps in $\pi_n$, is the beginning of a projective
resolution of the $\Sigma$-module $N \tl C$, with $N \tl K_n$ as the
$n$-th syzygy.

Since $C$ and $C'$ are maximal Cohen--Macaulay modules, we deduce that
$N \tl C$, $N \tl C'$, $K_n$ and $N \tl K_n$ are also maximal
Cohen--Macaulay modules, by using Lemma~\ref{lem:cm<->syzygy} and
Lemma~\ref{lem:tensor-functor-cm}.  We form the following commutative
diagram of $k$-vector spaces.
\[
\xymatrix@C=5em{
{\stHom_\Lambda(K_n, C')}
\ar[r]^-{N \tl -}_-{\iso}
\ar[d]_{\pi_n^*}^{\iso}
&
{\stHom_\Sigma(N \tl K_n, N \tl C')}
\ar[d]^{(N \tl \pi_n)^*}_{\iso}
\\
{\Ext^n_\Lambda(C, C')}
\ar[r]^-{N \tl -}
&
{\Ext^n_\Sigma(N \tl C, N \tl C')}
}
\]
The vertical maps are isomorphisms by Lemma~\ref{lem:stHom=ext}, and
the map in the top row is an isomorphism by
Proposition~\ref{prop:semtl->sCM}.  Therefore the map in the bottom row
is also an isomorphism, and this concludes the proof.
\end{proof}

Finally, we come to the main result of this section, where we show
that if two Gorenstein algebras $\Lambda$ and~$\Sigma$ are singularly
equivalent of Morita type with level, then for every extension group
(of sufficiently high degree) over~$\Lambda$, there is an isomorphic
extension group over~$\Sigma$.

\begin{prop}
\label{prop:ext-iso}
Let $\Lambda$ and~$\Sigma$ be finite-dimensional Gorenstein
$k$-algebras which are singularly equivalent of Morita type with
level, and let $\bimod{\Lambda}{M}{\Sigma}$
and~$\bimod{\Sigma}{N}{\Lambda}$ be bimodules which induce a singular
equivalence of Morita type with level between $\Lambda$ and~$\Sigma$.
Let
\[
d = \max \{ \id_\Lambda \Lambda, \id_\Sigma \Sigma \}
\]
be the maximum of the injective dimensions of $\Lambda$ and~$\Sigma$.
Then for every integer $n > d$, we have $k$-vector space isomorphisms
\begin{align*}
\Ext_\Lambda^n(A, B)
&\xrightarrow[N \tl -]{\iso}
\Ext_\Sigma^n(N \tl A, N \tl B)
&&
\text{for $\Lambda$-modules $A$ and $B$,} \\
\Ext_\Sigma^n(A', B')
&\xrightarrow[M \ts -]{\iso}
\Ext_\Lambda^n(M \ts A', M \ts B')
&&
\text{for $\Sigma$-modules $A'$ and $B'$.}
\end{align*}
\end{prop}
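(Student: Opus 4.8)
The plan is to reduce the general statement to the maximal Cohen--Macaulay case (Proposition~\ref{prop:ext-iso-cm}) by replacing the modules $A$ and~$B$ with sufficiently high syzygies, which become maximal Cohen--Macaulay by Lemma~\ref{lem:cm<->syzygy}, and then using the rotation maps of Section~\ref{sec:rot} to return to the original degree~$n$. By symmetry (the roles of $\Lambda$, $\Sigma$, $M$, $N$ are interchangeable), it suffices to establish the first isomorphism.

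First I would fix $n > d$ and choose an integer $i \ge d$ (for instance $i = n$ would work, but any $i \ge d$ with $i < n$ is immaterial; what matters is $i \ge \id_\Lambda\Lambda$ and $i \ge \id_\Sigma\Sigma$). Let $K = \syzygy_\Lambda^i(A)$ and $L = \syzygy_\Lambda^i(B)$ be the $i$-th syzygies of $A$ and~$B$ with respect to chosen projective resolutions. By Lemma~\ref{lem:cm<->syzygy}, since $i \ge \id_\Lambda\Lambda$, both $K$ and~$L$ are maximal Cohen--Macaulay $\Lambda$-modules. By Lemma~\ref{lem:semtl-functor-properties}, the functor $N \tl -$ takes the chosen projective resolutions of $A$ and~$B$ to projective resolutions of $N \tl A$ and~$N \tl B$, so $N \tl K = \syzygy_\Sigma^i(N \tl A)$ and $N \tl L = \syzygy_\Sigma^i(N \tl B)$, and by Lemma~\ref{lem:tensor-functor-cm} these are maximal Cohen--Macaulay over~$\Sigma$.

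Next I would assemble a commutative square relating the four Ext groups: the top row is the rotation isomorphism $\rho_i \colon \Ext_\Lambda^n(A,B) \xrightarrow{\iso} \Ext_\Lambda^n(K,L)$ from Lemma~\ref{lem:rotation} (valid since $n > \id_\Lambda\Lambda$ and $i < n$), the bottom row is the analogous rotation isomorphism $\Ext_\Sigma^n(N\tl A, N\tl B) \xrightarrow{\iso} \Ext_\Sigma^n(N\tl K, N\tl L)$ over~$\Sigma$ (valid since $n > \id_\Sigma\Sigma$), the left vertical map is $N \tl -$, and the right vertical map is $N \tl - \colon \Ext_\Lambda^n(K,L) \to \Ext_\Sigma^n(N\tl K, N\tl L)$, which is an isomorphism by Proposition~\ref{prop:ext-iso-cm} since $K$ and~$L$ are maximal Cohen--Macaulay. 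Once commutativity of this square is verified, the left vertical map is an isomorphism because the other three maps are, which is exactly the desired statement.

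The main obstacle is verifying that this square commutes, i.e. that the functor $N \tl -$ is compatible with the rotation maps. Unwinding the definition, $\rho_i = (\pi_i^*)^{-1}(\tau_i)_*$, where $\pi_i$ and~$\tau_i$ are the relevant truncations; applying $N \tl -$ to $\pi_i$ and~$\tau_i$ yields the corresponding truncations over~$\Sigma$ (using Lemma~\ref{lem:semtl-functor-properties} again), so one must check that $N \tl -$ commutes with the connecting maps $(-)^*$ and $(-)_*$ induced by Yoneda composition with these truncations. This is a formal but slightly tedious diagram chase: it amounts to the fact that the functor $N \tl -$, being exact, respects splicing (Yoneda composition) of extensions, which is essentially built into the definition of the map~\eqref{eq:ext-map} on Ext given in Section~\ref{sec:semtl}. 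I would spell this out by noting that both $(\pi_i^*)$ and $(\tau_i)_*$ are given by Yoneda composition with fixed extensions, and that $N \tl -$ sends the Yoneda composite of two extensions to the Yoneda composite of their images, hence commutes with both maps; commutativity of the square then follows immediately.
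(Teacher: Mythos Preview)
Your proposal is correct and matches the paper's proof almost exactly: the paper builds the same commutative square (taking $i = d$), invokes Lemma~\ref{lem:rotation} for the rotation isomorphisms, Lemma~\ref{lem:cm<->syzygy} to see the $d$-th syzygies are maximal Cohen--Macaulay, and Proposition~\ref{prop:ext-iso-cm} for the isomorphism on the CM side, then reads off the conclusion. The only slip is your parenthetical ``for instance $i = n$ would work,'' which contradicts the constraint $i < n$ required in the definition of the rotation map; simply take $i = d$ as the paper does (this satisfies $d \le i < n$ since $n > d$).
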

\begin{proof}
Let $A$ and~$B$ be $\Lambda$-modules, and let
\[
\pi \colon
\cdots \to
P_1 \to
P_0 \to
A \to
0
\qquad\text{and}\qquad
\tau \colon
\cdots \to
Q_1 \to
Q_0 \to
B \to
0
\]
be projective resolutions.  Then by
Lemma~\ref{lem:semtl-functor-properties}, the sequences $N \tl \pi$
and $N \tl \tau$ are projective resolutions of the $\Sigma$-modules $N
\tl A$ and $N \tl B$.  We form the following commutative diagram,
where $\rho_d$ is the $d$-th rotation map with respect to the
resolutions $\pi$ and $\tau$, and $\rho'_d$ the $d$-th rotation map
with respect to the resolutions $N \tl \pi$ and $N \tl \tau$.  These
maps are isomorphisms by Lemma~\ref{lem:rotation}.
\[
\xymatrix@C=5em{
{\Ext^n_\Lambda(A, B)}
\ar[r]^-{N \tl -}
\ar[d]_{\rho_d}^\iso
&
{\Ext^n_\Sigma(N \tl A, N \tl B)}
\ar[d]^{\rho'_d}_\iso
\\
{\Ext^{n+d}_\Lambda(\syzygy_\Lambda^d(A), \syzygy_\Lambda^d(B))}
\ar[r]^-{N \tl -}_-\iso
&
{\Ext^{n+d}_\Sigma(N \tl \syzygy_\Lambda^d(A), N \tl \syzygy_\Lambda^d(B))}
}
\]
By Lemma~\ref{lem:cm<->syzygy}, the syzygies $\syzygy_\Lambda^d(A)$
and $\syzygy_\Lambda^d(B)$ are maximal Cohen--Macaulay modules, and
then by Proposition~\ref{prop:ext-iso-cm}, the map $N \tl -$ in the
bottom row is an isomorphism.  It follows that the map $N \tl -$ in
the top row is an isomorphism.  This gives the first of the two
isomorphisms we want.  The second isomorphism follows by symmetry.
\end{proof}

\section{Hochschild cohomology rings}
\label{sec:hh}

In this section, we define the Hochschild cohomology ring
$\HH*(\Lambda)$ of an algebra~$\Lambda$, and we show that if two
Gorenstein $k$-algebras are singularly equivalent of Morita type with
level, then their Hochschild cohomology rings are isomorphic in almost
all degrees.

We first introduce some notation for rings of extensions.  If
$\Lambda$ is a $k$-algebra and $A$ a $\Lambda$-module, then we define
\[
\extring{\Lambda}{A}
= \Ext_\Lambda^*(A, A)
= \Dsum_{n \ge 0} \Ext_\Lambda^n(A, A).
\]
That is, $\extring{\Lambda}{A}$ denotes the graded $k$-algebra which
is the direct sum of all extension groups of $A$ by itself, with
multiplication given by Yoneda product.

We are interested in the ``asymptotic'' behaviour of such graded rings
of extensions; that is, we want to find isomorphisms which hold in all
degrees above some finite bound.  Given an extension ring
$\extring{\Lambda}{A}$, we therefore consider the graded ideals of the
form
\[
\extring[>d]{\Lambda}{A}
= \Dsum_{n > d} \Ext_\Lambda^n(A, A)
\]
for some integer~$d$.  We use the term \defterm{rng} for a ``ring
without identity''.  The object $\extring[>d]{\Lambda}{A}$ is thus a
graded rng.  In order to study the asymptotic behaviour of extension
rings, the appropriate morphisms to look at are the morphisms of
graded rngs between objects of the form $\extring[>d]{\Lambda}{A}$.

We define the Hochschild cohomology of an algebra as the extension
ring of the algebra over its enveloping algebra.

\begin{defn}
Let $\Lambda$ be a finite-dimensional $k$-algebra.  The
\defterm{Hochschild cohomology ring} of $\Lambda$ is the extension
ring $\HH*(\Lambda) = \extring{\e\Lambda}{\Lambda}$.
\end{defn}

Hochschild cohomology was first defined by G.~Hochschild
in~\cite{hochschild}.  The original definition uses the bar
resolution.  We follow the definition in~\cite{ce}, where Hochschild
cohomology is given by extension groups.  Since we have assumed that
$k$ is a field, this definition is equivalent to the original one.
More generally, the two definitions are equivalent whenever $\Lambda$
is projective over $k$ (see~\cite[IX, \S6]{ce}).

We now turn to the problem of showing that singular equivalences of
Morita type with level between Gorenstein algebras preserve Hochschild
cohomology in almost all degrees.  We need the following diagram
lemma, known as the ``$3 \times 3$ splice''.

\begin{lem} \cite[Lemma VIII.3.1]{maclane}
\label{lem:3x3-splice}
Let $R$ be a ring, and let
\[
\xymatrix@C=.1em@R=.1em{
             & \eta'\colon     && \eta\colon     && \eta''\colon \\
\eta_A\colon & A' \ar[rr]\ar[dd] && A \ar[rr]\ar[dd] && A'' \ar[dd]   \\\\
\eta_B\colon & B' \ar[rr]\ar[dd] && B \ar[rr]\ar[dd] && B'' \ar[dd]   \\\\
\eta_C\colon & C' \ar[rr]       && C \ar[rr]       && C''          \\\\
}
\]
% \[
% \xymatrix@C=1em@R=1em{
%              & \eta'\colon     & \eta\colon     & \eta''\colon     \\
% \eta_A\colon & A' \ar@{^{(}->}[r]\ar@{^{(}->}[d] & A \ar@{->>}[r]\ar@{^{(}->}[d] & A'' \ar@{^{(}->}[d] \\
% \eta_B\colon & B' \ar@{^{(}->}[r]\ar@{->>}[d] & B \ar@{->>}[r]\ar@{->>}[d] & B'' \ar@{->>}[d] \\
% \eta_C\colon & C' \ar@{^{(}->}[r]       & C \ar@{->>}[r]       & C''        \\
% }
% \]
% \[
% \xymatrix@C=1em@R=1em{
%              &          & \eta'\colon     & \eta\colon     & \eta''\colon         \\
%              &          & 0        \ar[d] & 0       \ar[d] & 0   \ar[d]           \\
% \eta_A\colon & 0 \ar[r] & A' \ar[r]\ar[d] & A \ar[r]\ar[d] & A'' \ar[r]\ar[d] & 0 \\
% \eta_B\colon & 0 \ar[r] & B' \ar[r]\ar[d] & B \ar[r]\ar[d] & B'' \ar[r]\ar[d] & 0 \\
% \eta_C\colon & 0 \ar[r] & C' \ar[r]\ar[d] & C \ar[r]\ar[d] & C'' \ar[r]\ar[d] & 0 \\
%              &          & 0               & 0              & 0                    \\
% }
% \]
be a commutative diagram of $R$-modules, where the three rows
$\eta_A$, $\eta_B$ and $\eta_C$, as well as the three columns $\eta'$,
$\eta$ and $\eta''$, are short exact sequences.  Then the elements in
the extension group $\Ext_R^1(C'', A')$ represented by the composition
$\eta_A \extcomp \eta''$ and by the composition $\eta' \extcomp
\eta_C$ are the additive inverses of each other:
\[
[\eta_A \extcomp \eta''] = - [\eta' \extcomp \eta_C].
\]
\end{lem}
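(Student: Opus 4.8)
The plan is to reduce the statement to the \emph{anticommutativity of the connecting homomorphisms} associated with a bicomplex, or equivalently to prove it directly by a diagram chase using the standard description of Yoneda splices. I will pursue the direct route. Recall that for a short exact sequence $\xi\colon 0\to X\to Y\to Z\to 0$ and a module $W$, the map $\xi^*\colon \Hom_R(X,W)\to\Ext_R^1(Z,W)$ and the map $\xi_*\colon\Hom_R(W,Z)\to\Ext_R^1(W,X)$ are the connecting homomorphisms, and the Yoneda composite $\xi\extcomp\xi'$ of two extensions is obtained by splicing; concretely, $[\eta_A\extcomp\eta'']$ is the class $\partial(\eta_A)$ where $\partial\colon \Ext_R^0 \dashrightarrow \Ext_R^1$ is induced by $\eta''$, applied to the identity, etc. The cleanest bookkeeping is: $[\eta_A\extcomp\eta'']=(\eta'')^*\bigl([\eta_A]\bigr)$ where $(\eta'')^*\colon\Ext_R^1(A'',A')\to\Ext_R^1(C'',A')$ is induced by the surjection-part map $C''\to A''$, no — rather $(\eta'')_*$ acting via the inclusion; I will fix the precise variance when writing it out. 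The point is that both $[\eta_A\extcomp\eta'']$ and $[\eta'\extcomp\eta_C]$ are images of a single ``universal'' class under two connecting maps that differ by a sign.

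First I would set up the pullback/pushout interpretation. The composite $\eta_A\extcomp\eta''$ is, up to the usual identifications, obtained as follows: form the pullback of $\eta_B\colon 0\to A'\to B'\to C'\to 0$ — wait, that is not quite the right sequence. Let me instead argue via the total complex. Consider the $R$-modules arranged as in the diagram and take the obvious double complex with the nine displayed terms (padded by zeros), placing $A'$ in the corner. The rows $\eta_A,\eta_B,\eta_C$ and columns $\eta',\eta,\eta''$ being short exact means every row and column of this double complex is exact except at the corners. A standard fact (the ``staircase'' or ``zig-zag'' argument, e.g.\ \cite[Lemma VIII.3.1]{maclane} itself, or the snake lemma iterated) identifies the two ways of threading a connecting homomorphism from $C''$ down-and-left to $A'$: one way uses $\eta''$ then $\eta_A$, the other uses $\eta_C$ then $\eta'$, and the commutativity of the double complex forces these to agree up to the sign $(-1)$ coming from transposing the two differentials of the bicomplex.

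The concrete diagram chase I would carry out: choose $c''\in C''$; lift it to $c\in C$ (surjectivity in $\eta_C$... no, in the column $\eta''$: $C\to C''$). Lift $c$ to $b\in B$. Its images $b'\in B'$ (down) — wait, $B$ maps down to $C$, not to $B'$. I need to be careful about directions: horizontal maps go $(')\to(\ )\to('')$, vertical maps go $A\to B\to C$. So from $c''$: pull back along $C\to C''$ to get $c\in C$; pull back along $B\to C$ to get $b\in B$; push $b$ along $B\to B''$ to get $b''$, which maps to $0$ in $C''$ hence lifts to $a''\in A''$; this $a''$ represents the class $\eta''$ applied to $c''$. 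Continuing: pull $a''$ back along $A\to A''$ to $a\in A$, push to $a'\in A'$... I will track the two resulting cocycles representing $\eta_A\extcomp\eta''$ and $\eta'\extcomp\eta_C$ and exhibit the explicit sign discrepancy.

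The main obstacle I anticipate is purely one of \emph{sign conventions}: the definitions of the connecting homomorphism, of the Yoneda product, and of the identification $\Ext^1(C,A)\iso\Ext^1$-via-connecting-map all carry signs that depend on the source, and getting the minus sign (rather than a spurious plus) requires fixing one consistent convention and propagating it through the whole chase. I would handle this by citing \cite[Lemma VIII.3.1]{maclane} for the result with MacLane's conventions and remarking that our conventions for Ext and Yoneda product agree with his (or differ by a global sign that cancels), so that the stated identity $[\eta_A\extcomp\eta'']=-[\eta'\extcomp\eta_C]$ holds as written. Since the lemma is quoted from the literature, the honest move is to attribute it and only verify that no convention mismatch flips the sign in a way that would matter for its later uses.
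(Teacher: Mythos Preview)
The paper does not prove this lemma at all: it is stated with the citation \cite[Lemma VIII.3.1]{maclane} and no proof is given. Your final suggestion---to simply cite MacLane and at most check that the sign conventions match---is therefore exactly what the paper does, and is the right move.

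That said, the bulk of your proposal is a rather tangled attempt at a direct diagram chase, with several false starts (``wait, that is not quite the right sequence'', ``no, rather $(\eta'')_*$'', confusion over which maps go which way). None of this is needed, and as written it would not constitute a proof: you never actually complete the chase or pin down where the sign comes from. If you want to include an argument rather than just a citation, the cleanest route is the one you gesture at but do not carry out: view the $3\times 3$ diagram as a double complex, use that the two iterated connecting homomorphisms $\Hom_R(C'',C'')\to\Ext_R^2(C'',A')$ (one going row-then-column, the other column-then-row) differ by the sign $(-1)^{1\cdot 1}=-1$ from the Koszul sign rule for bicomplexes, and identify each with the corresponding Yoneda splice. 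But for the purposes of this paper, the bare citation suffices.
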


If two bimodules $\bimod{\Lambda}{M}{\Sigma}$
and~$\bimod{\Sigma}{N}{\Lambda}$ induce a singular equivalence of
Morita type with level between algebras $\Lambda$ and~$\Sigma$, then
the $\e\Lambda$-module $M \ts N$ is a syzygy of~$\Lambda$.  In the
following lemma, we use Lemma~\ref{lem:3x3-splice} to show that under
certain assumptions, the tensor functors $(M \ts N) \tl -$ and $- \tl
(M \ts N)$ induce isomorphisms of Ext groups in almost all degrees.
This is afterwards used in the proof of Theorem~\ref{thm:hh-iso}.

\begin{lem}
\label{lem:tensor-syzygy}
Let $\Lambda$ be a finite-dimensional Gorenstein $k$-algebra, and let
$U$ be a $\e\Lambda$-module which is projective as a left
$\Lambda$-module and as a right $\Lambda$-module.  Let $d \ge 2 \cdot
\id_\Lambda \Lambda$.  Let $K$ be an $i$-th syzygy of $\Lambda$ as
$\e\Lambda$-module, for some $i < d$.  Then the maps
\[
K \tl - \colon
\extring[>d]{\e\Lambda}{U} \to
\extring[>d]{\e\Lambda}{K \tl U}
\quad \text{and} \quad
- \tl K \colon
\extring[>d]{\e\Lambda}{U} \to
\extring[>d]{\e\Lambda}{U \tl K}
\]
are graded rng isomorphisms.
\end{lem}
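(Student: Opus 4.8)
The plan is to identify the map $K \tl -$ with a rotation map in the sense of Section~\ref{sec:rot}, up to a harmless sign, and then to apply Lemma~\ref{lem:rotation}. First I would note that the functor $K \tl - \colon \fmod \e\Lambda \to \fmod \e\Lambda$ is exact and takes projective modules to projective modules. Every projective $\e\Lambda$-module is projective as a left $\Lambda$-module and as a right $\Lambda$-module, and $K$ is an $i$-th syzygy of $\Lambda$ over $\e\Lambda$; so the same argument as in the proof of Proposition~\ref{prop:semt=>semtl} shows that $K$ is projective as a left $\Lambda$-module and as a right $\Lambda$-module. Hence $K \tl -$ is exact (because $K$ is flat as a right $\Lambda$-module) and preserves projectives (because $K \tl P$ is a projective $\e\Lambda$-module for every projective $\e\Lambda$-module $P$, since $K \tl \e\Lambda \iso K \tk \Lambda$ and $K$ is projective on the left). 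An exact functor induces maps of extension groups that are compatible with Yoneda products and with the grading, so $K \tl -$ gives a graded $k$-algebra homomorphism $\extring{\e\Lambda}{U} \to \extring{\e\Lambda}{K \tl U}$ whose restriction to high degrees is a morphism of graded rngs $\extring[>d]{\e\Lambda}{U} \to \extring[>d]{\e\Lambda}{K \tl U}$. It remains to show that this restriction is bijective in every degree $n > d$.

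Next I would make the identification $K \tl U \iso \syzygy^i_{\e\Lambda}(U)$ explicit. Fix a projective resolution $\cdots \to Q_1 \to Q_0 \to \Lambda \to 0$ of $\Lambda$ over $\e\Lambda$ with $K$ as the $i$-th syzygy, and let $K_j = \syzygy^j_{\e\Lambda}(\Lambda)$ denote the associated syzygies, so that $K_0 = \Lambda$ and $K_i = K$. Applying $- \tl U$ to this resolution gives a complex $\cdots \to Q_1 \tl U \to Q_0 \tl U \to U \to 0$ that is exact, because $U$ is flat as a left $\Lambda$-module, and has projective terms, because $U$ is projective as a right $\Lambda$-module; hence it is a projective resolution of the $\e\Lambda$-module $U$ whose $j$-th syzygy is $K_j \tl U$. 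In particular $K \tl U \iso \syzygy^i_{\e\Lambda}(U)$, and for every $\e\Lambda$-module $Z$ we obtain a natural $i$-fold extension
\[
\Theta_Z \colon 0 \to K \tl Z \to Q_{i-1} \tl Z \to \cdots \to Q_0 \tl Z \to Z \to 0
\]
representing an element of $\Ext^i_{\e\Lambda}(Z, K \tl Z)$; it is the Yoneda product $\Theta_Z = \omega^{(i)}_Z \extcomp \cdots \extcomp \omega^{(1)}_Z$ of the short exact sequences $\omega^{(j)}_Z \colon 0 \to K_j \tl Z \to Q_{j-1} \tl Z \to K_{j-1} \tl Z \to 0$, each of which is natural in $Z$, being obtained by applying $- \tl Z$ to a short exact sequence of $\e\Lambda$-modules.

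The heart of the argument — and the step I expect to take the most care — is to show that, under the above identification $K \tl U \iso \syzygy^i_{\e\Lambda}(U)$, the map $K \tl - \colon \Ext^n_{\e\Lambda}(U, U) \to \Ext^n_{\e\Lambda}(K \tl U, K \tl U)$ coincides with the $i$-th rotation map $\rho_i$ (taken with respect to the resolution $Q_\bullet \tl U$ in both arguments) up to a sign depending only on $n$ and $i$. Since $\Theta_U$ plays the role of both $\pi_i$ and $\tau_i$ in the definition of $\rho_i$, this amounts to the identity $(K \tl \xi) \extcomp \Theta_U = \pm\, \Theta_U \extcomp \xi$ in $\Ext^{n+i}_{\e\Lambda}(U, K \tl U)$ for $\xi \in \Ext^n_{\e\Lambda}(U, U)$. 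I would first reduce to the case where $\xi$ is a single short exact sequence, using that $K \tl -$ respects Yoneda composition and that $\Theta$ is natural. For a short exact sequence $\xi \colon 0 \to Y \to E \to X \to 0$ of $\e\Lambda$-modules and each $j \in \{1, \dots, i\}$, applying the functorial short exact sequence $\omega^{(j)}$ to $\xi$ produces a commutative $3 \times 3$ diagram of $\e\Lambda$-modules whose rows are $K_j \tl \xi$, $Q_{j-1} \tl \xi$, $K_{j-1} \tl \xi$ (all exact, as these functors are exact) and whose columns are three copies of $\omega^{(j)}$; Lemma~\ref{lem:3x3-splice} then gives $(K_j \tl \xi) \extcomp \omega^{(j)}_X = -\, \omega^{(j)}_Y \extcomp (K_{j-1} \tl \xi)$. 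Feeding these $i$ relations successively into the decomposition of $\Theta$ into short exact sequences (and then passing from short exact sequences to arbitrary $\xi$ via the reduction above) yields the identity, with overall sign $(-1)^{ni}$. Keeping track of these signs is the only delicate point; everything else is formal.

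Finally, I would conclude. By Lemma~\ref{lem:gorenstein-envalg} the enveloping algebra $\e\Lambda$ is Gorenstein with $\id_{\e\Lambda} \e\Lambda \le 2 \id_\Lambda \Lambda \le d$, and for $n > d$ both $n > \id_{\e\Lambda} \e\Lambda$ and $i < d < n$ hold, so $\rho_i$ is an isomorphism on $\Ext^n_{\e\Lambda}(U, U)$ by Lemma~\ref{lem:rotation}. Hence $K \tl -$ is bijective in every degree $n > d$, and therefore the graded rng morphism $K \tl - \colon \extring[>d]{\e\Lambda}{U} \to \extring[>d]{\e\Lambda}{K \tl U}$ is an isomorphism. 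For the functor $- \tl K$ one argues symmetrically: it is exact because $K$ is projective as a left $\Lambda$-module and preserves projectives because $K$ is projective as a right $\Lambda$-module, tensoring the resolution of $\Lambda$ with $U$ on the left shows $U \tl K \iso \syzygy^i_{\e\Lambda}(U)$, and the same $3 \times 3$-splice computation identifies $- \tl K$ with $\rho_i$ up to sign.
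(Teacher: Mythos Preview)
Your proposal is correct and follows essentially the same strategy as the paper: identify $K \tl U$ with $\syzygy^i_{\e\Lambda}(U)$ by tensoring a projective resolution of $\Lambda$ with $U$, then show that $K \tl -$ agrees with the rotation map $\rho_i$ up to the sign $(-1)^{in}$ via repeated applications of the $3\times 3$ splice (Lemma~\ref{lem:3x3-splice}), and conclude with Lemma~\ref{lem:gorenstein-envalg} and Lemma~\ref{lem:rotation}. The only organizational difference is that you decompose both $\Theta$ and $\xi$ into short exact sequences and commute them one step at a time, whereas the paper tensors the truncated resolution $\pi_i$ with the entire $n$-fold extension $\eta$ to produce a single large grid and applies the splice lemma across it; the two computations are equivalent.
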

\begin{proof}
We show that the map $K \tl -$ is an isomorphism; the proof for $- \tl
K$ is similar.  Let
\[
\pi\colon \cdots \to P_1 \to P_0 \to \Lambda \to 0
\]
be a projective resolution of $\Lambda$ as $\e\Lambda$-module, with
$K$ as the $i$-th syzygy, and let
\[
\sigma \colon
\cdots \to P_1 \tl U \to P_0 \tl U \to U \to 0.
\]
be the result of applying the functor $- \tl U$ to the sequence~$\pi$
and identifying $\Lambda \tl U$ with~$U$ in the last term.  This
sequence is exact since $U$ is projective as left module, and every
$P_j \tl U$ is projective since $U$ is projective as right module.
Thus, $\sigma$ is a projective resolution of~$U$, and $K \tl U$ is an
$i$-th syzygy of~$U$.

By Lemma~\ref{lem:gorenstein-envalg}, the enveloping algebra
$\e\Lambda$ of $\Lambda$ is Gorenstein, and we have $\id_{\e\Lambda}
\e\Lambda \le 2 \cdot \id_\Lambda \Lambda \le d$.  Then by
Lemma~\ref{lem:rotation}, the $i$-th rotation map
\[
\rho_i \colon
\extring[>d]{\e\Lambda}{U} \to \extring[>d]{\e\Lambda}{K \tl U}
\]
(with respect to the resolution $\sigma$) is a graded rng isomorphism.
We show that the map $K \tl -$ is an isomorphism by showing that it is
equal to the map~$\rho_i$, up to sign.  More precisely, we show that
for any homogeneous element $[\eta] \in \extring[>d]{\e\Lambda}{U}$ of
degree $n > d$, we have
\[
K \tensor_\Lambda [\eta] = (-1)^{in} \cdot \rho_i([\eta]).
\]

Let $[\eta] \in \extring[>d]{\e\Lambda}{U}$ be a homogeneous element
of degree $n > d$ represented by an exact sequence
\[
\eta \colon
0 \to U \to E_n \to \cdots \to E_1 \to U \to 0.
\]
We can assume without loss of generality that all the modules $E_j$
are projective as left $\Lambda$-modules and as right
$\Lambda$-modules.  Let
\begin{align*}
\pi_i&\colon 0 \to K \to P_{i-1} \to \cdots \to P_0 \to \Lambda \to 0 \\
\sigma_i&\colon
0 \to K \tl U \to P_{i-1} \tl U \to \cdots \to P_0 \tl U \to U \to 0
\end{align*}
be truncations of the projective resolutions $\pi$ and $\sigma$.
\begin{figure}
\[
\xymatrix@C=1em@R=2em{
& 0 \ar[d] & 0 \ar[d] & & 0 \ar[d] & 0 \ar[d] \\
0 \ar[r] & K \tl U \ar[r]\ar[d] & K \tl E_n \ar[r]\ar[d] & \cdots \ar[r] & K \tl E_1 \ar[r]\ar[d] & K \tl U \ar[r]\ar[d] & 0 \\
0 \ar[r] & P_{i-1} \tl U \ar[r]\ar[d] & P_{i-1} \tl E_n \ar[r]\ar[d] & \cdots \ar[r] & P_{i-1} \tl E_1 \ar[r]\ar[d] & P_{i-1} \tl U \ar[r]\ar[d] & 0 \\
  & \vdots \ar[d] & \vdots \ar[d] & \ddots & \vdots \ar[d] & \vdots \ar[d] & \\
0 \ar[r] & P_0 \tl U \ar[r]\ar[d] & P_0 \tl E_n \ar[r]\ar[d] & \cdots \ar[r] & P_0 \tl E_1 \ar[r]\ar[d] & P_0 \tl U \ar[r]\ar[d] & 0 \\
0 \ar[r] & U \ar[r]\ar[d] & E_n \ar[r]\ar[d] & \cdots \ar[r] & E_1 \ar[r]\ar[d] & U \ar[r]\ar[d] & 0 \\
& 0  & 0  & & 0  & 0
}
\]
\caption{Commutative diagram used in the proof of
  Lemma~\ref{lem:tensor-syzygy}.}
\label{fig:tensor}
\end{figure}
We construct the commutative diagram in Figure~\ref{fig:tensor} by
tensoring $\pi_i$ with~$\eta$ over $\Lambda$ and identifying $\Lambda
\tl -$ with the identity in the last row.  The rows and columns of the
diagram are exact sequences.

The bottom row in the diagram is the sequence~$\eta$, the top row is
the sequence $K \tl \eta$, and the first and the last column are both
equal to the sequence $\sigma_i$.  By using Lemma~\ref{lem:3x3-splice}
repeatedly, we get the equality
\[
[(K \tensor_\Lambda \eta) \extcomp \sigma_i] = (-1)^{in} [\sigma_i \extcomp \eta]
\]
in the extension group $\Ext_{\e\Lambda}^{n+i}(U, K \tensor_\Lambda
U)$.  By the definition of the rotation map~$\rho_i$, we then get
\[
K \tensor_\Lambda [\eta]
= [K \tensor_\Lambda \eta]
= (-1)^{in} \cdot \rho_i([\eta]).
\]
Since the map $\rho_i$ is an isomorphism, this means that the map $K
\tensor_\Lambda -$ is an isomorphism as well.
\end{proof}

We now show that a singular equivalence of Morita type with level
between Gorenstein $k$-algebras preserves the Hochschild cohomology in
almost all degrees.  A weaker form of this result, stating that a
singular equivalence of Morita type preserves Hochschild cohomology
groups in almost all degrees (but not necessarily the ring structure
of the cohomology), appears in~\cite[Remark 4.3]{zz}.

\begin{thm}
\label{thm:hh-iso}
Let $\Lambda$ and~$\Sigma$ be finite-dimensional Gorenstein
$k$-algebras which are singularly equivalent of Morita type with
level.  Then we have the following.
\begin{enumerate}
\item The Hochschild cohomology rings $\HH{*}(\Lambda)$ and
$\HH{*}(\Sigma)$ are isomorphic in almost all degrees, with
isomorphisms that respect the ring structure.
\item Let $\bimod{\Lambda}{M}{\Sigma}$
and~$\bimod{\Sigma}{N}{\Lambda}$ be bimodules which induce a singular
equivalence of Morita type with level~$l \ge 1$ \textup{(}see
Remark~\ref{rem:hh-iso-level}\textup{)} between $\Lambda$
and~$\Sigma$, and let $d = \max \{ l, 2 \cdot \id_\Lambda \Lambda, 2
\cdot \id_\Sigma \Sigma \}$.  Then there are isomorphisms
\[
\xymatrix{
\HH{>d}(\Lambda) \ar[d]_{N \tl - \tl M}^{\iso} \ar[r]^-{\rho_l}_-{\iso} &
\extring[>d]{\e\Lambda}{M \ts \Sigma \ts N} \\
\extring[>d]{\e\Sigma}{N \tl \Lambda \tl M} &
\HH{>d}(\Sigma) \ar[u]_{M \ts - \ts N}^{\iso} \ar[l]^-{\rho'_l}_-{\iso}
}
\]
of graded rngs, where the maps $\rho_l$ and~$\rho'_l$ are rotation
maps.
\end{enumerate}
\end{thm}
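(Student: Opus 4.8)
The plan is to prove part~(2) first, constructing the four maps and showing they are isomorphisms, and then deduce part~(1) as an immediate consequence. The starting point is the observation that, since $M$ and~$N$ induce a singular equivalence of Morita type with level~$l$, the $\e\Lambda$-module $M \ts N$ is an $l$-th syzygy of~$\Lambda$ in $\stmod \e\Lambda$, and likewise $N \tl M$ is an $l$-th syzygy of~$\Sigma$. The module $M \ts \Sigma \ts N$ is isomorphic to $M \ts N$ as $\e\Lambda$-modules (identifying $\Sigma$ with the regular bimodule), hence it too is an $l$-th syzygy of~$\Lambda$; similarly $N \tl \Lambda \tl M \iso N \tl M$ is an $l$-th syzygy of~$\Sigma$. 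This puts us in exactly the situation of Lemma~\ref{lem:tensor-syzygy}, provided the relevant modules are projective on both sides: $\Lambda$ is projective as a left and right $\Lambda$-module, and $\Sigma$ is projective as a left and right $\Sigma$-module, and since $d \ge \max\{2\id_\Lambda\Lambda, 2\id_\Sigma\Sigma\}$ and $l \le d$, the hypotheses of that lemma are met.

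The rotation maps $\rho_l$ and $\rho_l'$ are the maps from Lemma~\ref{lem:rotation}: $\rho_l$ rotates $\HH{>d}(\Lambda) = \extring[>d]{\e\Lambda}{\Lambda}$ to $\extring[>d]{\e\Lambda}{\syzygy_{\e\Lambda}^l(\Lambda)}$, and since $M \ts \Sigma \ts N$ is an $l$-th syzygy of~$\Lambda$, this target is $\extring[>d]{\e\Lambda}{M \ts \Sigma \ts N}$. Because $\e\Lambda$ is Gorenstein with $\id_{\e\Lambda}\e\Lambda \le 2\id_\Lambda\Lambda \le d$ by Lemma~\ref{lem:gorenstein-envalg}, and every degree in $\HH{>d}(\Lambda)$ exceeds this injective dimension, Lemma~\ref{lem:rotation} gives that $\rho_l$ is a graded rng isomorphism; the same applies to $\rho_l'$. (Strictly, one should check that the rotation map respects Yoneda products, so that it is a rng homomorphism and not merely a degreewise isomorphism of graded vector spaces; this is where the ``concrete rotation of extensions'' picture from Section~\ref{sec:rot} is used.)

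For the vertical maps, the map $N \tl - \tl M$ is the composite $(N \tl -) \circ (- \tl M)$ applied to extensions of $\Lambda$ by itself over $\e\Lambda$; it lands in $\extring[>d]{\e\Sigma}{N \tl \Lambda \tl M}$. To see it is an isomorphism, I would factor it through Lemma~\ref{lem:tensor-syzygy}: first, $- \tl M$ does not directly fit that lemma (it tensors on the right by the bimodule~$M$, not by a syzygy of an algebra), so instead the cleaner route is to compose the two known isomorphisms. Namely, $\rho_l$ identifies $\HH{>d}(\Lambda)$ with $\extring[>d]{\e\Lambda}{M \ts N}$, and then Lemma~\ref{lem:tensor-syzygy} applied with $K = N \tl M$ (an $l$-th syzygy of $\Sigma$, and $l \le d$) together with the functor $N \tl - \tl M$ viewed appropriately — one checks that $N \tl (M \ts N) \tl M \iso (N \tl M) \tl (N \tl M)$, and more to the point that the map $\HH{>d}(\Sigma) \to \extring[>d]{\e\Sigma}{N \tl M}$ given by $N \tl - \ts M$ composed with $\rho_l'$ is an iso by Lemma~\ref{lem:tensor-syzygy}. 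So I would set up the square so that going around three sides (two rotations plus one application of Lemma~\ref{lem:tensor-syzygy}) is an isomorphism, forcing the fourth side — the vertical map $N \tl - \tl M$ — to be an isomorphism as well. Once part~(2) is established, part~(1) follows: composing the four isomorphisms around the square (or just reading off $N\tl-\tl M$ together with the two rotation isomorphisms and the symmetric maps for $\Sigma$) yields a graded rng isomorphism $\HH{>d}(\Lambda) \iso \HH{>d}(\Sigma)$, which is precisely an isomorphism of Hochschild cohomology rings in all degrees above~$d$, respecting the ring structure.

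**Main obstacle.** The hard part will be bookkeeping: verifying that all four maps in the square are genuinely \emph{graded rng} morphisms (compatible with Yoneda products), not just collections of vector-space isomorphisms, and checking that the square actually commutes up to the sign factors $(-1)^{in}$ that Lemma~\ref{lem:tensor-syzygy} introduces. The sign discrepancies from the $3\times 3$-splice (Lemma~\ref{lem:3x3-splice}) must be tracked carefully so that they cancel around the square; getting the identifications $N \tl (M \ts \Sigma \ts N) \tl M \iso (N \tl \Lambda \tl M) \tl (N \tl \Lambda \tl M)$ right, and confirming that $\rho_l'$ applied afterward matches up, is the delicate combinatorial core. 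I expect no conceptual difficulty beyond what is already in Lemma~\ref{lem:tensor-syzygy}, but the diagram chase tying everything together is the step most prone to error.
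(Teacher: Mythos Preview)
Your overall plan and your treatment of the rotation maps $\rho_l$, $\rho'_l$ match the paper: part~(2) is established first and part~(1) is read off, and the rotations are isomorphisms because $\e\Lambda$ and~$\e\Sigma$ are Gorenstein with injective dimension bounded by~$d$ (Lemma~\ref{lem:gorenstein-envalg} plus Lemma~\ref{lem:rotation}).

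The argument for the vertical maps, however, does not hold together. You propose to exhibit a commutative square in which three sides are known isomorphisms (``two rotations plus one application of Lemma~\ref{lem:tensor-syzygy}'') and conclude that the fourth side $N\tl-\tl M$ is an isomorphism. But the diagram in the theorem statement is not a commutative square: all four arrows point \emph{out of} the two Hochschild corners and \emph{into} the two extension-ring corners, so nothing composes around a cycle. Your alternative square is never made precise, and the pieces you sketch have type mismatches --- Lemma~\ref{lem:tensor-syzygy} with $K=N\tl M$ acts on $\e\Sigma$-modules, whereas $\rho_l$ lands in an extension ring over~$\e\Lambda$, so one cannot chain them as you describe. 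The identification $N\tl(M\ts N)\tl M \iso (N\tl M)\ts(N\tl M)$ is correct and relevant, but it does not by itself produce the square you need.

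The paper instead builds an \emph{auxiliary} commutative square that you do not write down:
\[
\xymatrix@C=6em{
\HH{n}(\Lambda)
\ar[r]^-{N \tl - \tl M}
\ar[d]_{(M \ts N) \tl -}^{\iso}
&
\extring[n]{\e\Sigma}{N \tl \Lambda \tl M}
\ar[d]^{M \ts - \ts N}
\\
\extring[n]{\e\Lambda}{M \ts N \tl \Lambda}
\ar[r]_-{- \tl (M \ts N)}^-{\iso}
&
\extring[n]{\e\Lambda}{M \ts N \tl \Lambda \tl M \ts N}
}
\]
Both paths compute $(M\ts N)\tl-\tl(M\ts N)$, and the left and bottom arrows are isomorphisms by Lemma~\ref{lem:tensor-syzygy} with $K=M\ts N$. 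This yields only that $N\tl-\tl M$ is a \emph{monomorphism}; a symmetric square shows $M\ts-\ts N$ is a monomorphism as well. The closing step --- absent from your outline --- is a dimension count: combining these monomorphisms with the rotation isomorphisms gives $\dim_k\HH{n}(\Lambda)\le\dim_k\HH{n}(\Sigma)\le\dim_k\HH{n}(\Lambda)$, and since each Hochschild group is finite-dimensional over~$k$, both monomorphisms are isomorphisms. As a side remark, the sign bookkeeping you flag as the ``main obstacle'' is already absorbed into the statement of Lemma~\ref{lem:tensor-syzygy}; once that lemma is in hand, no further sign-tracking is needed here.
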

\begin{proof}
We show part~(2).  Part~(1) then follows directly.

Since $M$ and~$N$ induce a singular equivalence of Morita type with
level~$l$, the module $M \ts \Sigma \ts N \iso M \ts N$ is an $l$-th
syzygy of~$\Lambda$ as a $\e\Lambda$-module.  Let
\[
\rho_l \colon \HH{>d}(\Lambda) \to \extring[>d]{\e\Lambda}{M \ts \Sigma \ts N}
\]
be the $l$-th rotation map with respect to a projective resolution of
$\Lambda$ with $M \ts \Sigma \ts N$ as the $l$-th syzygy.  By
Lemma~\ref{lem:gorenstein-envalg}, the enveloping algebras
$\e\Lambda$ and~$\e\Sigma$ are Gorenstein algebras, and we have
$\id_{\e\Lambda} \e\Lambda \le 2 \cdot \id_\Lambda \Lambda$ and
$\id_{\e\Sigma} \e\Sigma \le 2 \cdot \id_\Sigma \Sigma$.  By
Lemma~\ref{lem:rotation}, the rotation map $\rho_l$ is an isomorphism,
since
\[
\max \{ l, \id_{\e\Lambda} \e\Lambda, \id_{\e\Sigma} \e\Sigma \}
\le \max \{ l, 2 \cdot \id_\Lambda \Lambda, 2 \cdot \id_\Sigma \Sigma \}
= d.
\]
We can similarly define the rotation map~$\rho'_l$ and show that it is
an isomorphism.

We now show that the maps $N \tl - \tl M$ and $M \ts - \ts N$ are
isomorphisms.  For any $n > d$, we can make the following diagram:
\begin{equation}
\label{eqn:hh-diagram}
\vcenter{
\xymatrix{
\HH{n}(\Lambda) \ar[d]_{N \tl - \tl M} \ar[r]^-{\rho_l}_-{\iso} &
\extring[n]{\e\Lambda}{M \ts \Sigma \ts N} \\
\extring[n]{\e\Sigma}{N \tl \Lambda \tl M} &
\HH{n}(\Sigma) \ar[u]_{M \ts - \ts N} \ar[l]^-{\rho'_l}_-{\iso}
}}
\end{equation}
Consider the map $N \tl - \tl M$ in this diagram.  We construct the
following commutative diagram with this map at the top:
\[
\xymatrix@C=6em{
\HH{n}(\Lambda)
\ar[r]^-{N \tl - \tl M}
\ar[d]_{(M \ts N) \tl -}^{\iso}
&
\extring[n]{\e\Sigma}{N \tl \Lambda \tl M}
\ar[d]^{M \ts - \ts N}
\\
\extring[n]{\e\Lambda}{M \ts N \tl \Lambda}
\ar[r]_-{- \tl (M \ts N)}^-{\iso}
&
\extring[n]{\e\Lambda}{M \ts N \tl \Lambda \tl M \ts N}
}
\]
By Lemma~\ref{lem:tensor-syzygy}, the maps $(M \ts N) \tl -$ and $-
\tl (M \ts N)$ in this diagram are isomorphisms, since $M \ts N$ is an
$l$-th syzygy of $\Lambda$ as $\e\Lambda$-module.  Therefore, the map
$N \tl - \tl M$ in diagram~\eqref{eqn:hh-diagram} is a monomorphism.
By a similar argument, the map $M \ts - \ts N$ in
diagram~\eqref{eqn:hh-diagram} is a monomorphism.  Since
$\HH{n}(\Lambda)$ and $\HH{n}(\Sigma)$ are finite-dimensional
over~$k$, it follows that these monomorphisms must be isomorphisms.
\end{proof}

\begin{rem}
\label{rem:hh-iso-level}
In Theorem~\ref{thm:hh-iso}~(2), we assumed that the level~$l$ is
positive.  The reason for this is that if we had allowed $l=0$, then
we could not have made the rotation maps $\rho_l$ and $\rho'_l$.  This
assumption does not strongly affect the applicability of the theorem,
since any equivalence with level~$0$ implies the existence of an
equivalence with level~$1$.  In general, if two bimodules
$\bimod{\Lambda}{M}{\Sigma}$ and $\bimod{\Sigma}{N}{\Lambda}$ induce a
singular equivalence of Morita type with level~$l$ between algebras
$\Lambda$ and $\Sigma$, then the bimodules $\syzygy_{\Lambda \tk
  \opposite{\Sigma}}^1(M)$ and $N$ induce a singular equivalence of
level $l+1$ between $\Lambda$ and $\Sigma$.
\end{rem}

\section{Finite generation}
\label{sec:fg}

Support varieties for modules over artin algebras were defined by
Snashall and Solberg in~\cite{ss}, using the Hochschild cohomology
ring.  In~\cite{ehsst}, Erdmann, Holloway, Snashall, Solberg and
Taillefer defined two finite generation conditions \textbf{Fg1}
and~\textbf{Fg2} for the Hochschild cohomology ring of an algebra.
These conditions ensure that the support varieties for modules over
the given algebra have good properties.  In~\cite{es}, these
conditions were reformulated as a new condition called~\textbf{(Fg)}
which is equivalent to the combination of \textbf{Fg1} and
\textbf{Fg2}.  We use the definition from~\cite{es}.

In this section, we describe the finite generation
condition~\fgtext{(Fg)}.  We then show the main result of this paper
(Theorem~\ref{thm:main}): A singular equivalence of Morita type with
level between finite-dimensional Gorenstein $k$-algebras preserves the
\fgtext{(Fg)} condition.

In order to define the \fgtext{(Fg)} condition, we first describe a way
to view extension rings over an algebra as modules over the Hochschild
cohomology ring.  Let $\Lambda$ be a finite-dimensional $k$-algebra
and $A$ a $\Lambda$-module.  We define a graded ring homomorphism
\[
\varphi_A \colon
\HH*(\Lambda) \to \extring{\Lambda}{A}
\]
as follows.  A homogeneous element of $\HH*(\Lambda)$ can be
represented by an exact sequence
\[
\eta \colon
0 \to
\Lambda \to
E \to
P_n \to
\cdots \to
P_0 \to
\Lambda \to
0
\]
of $\e\Lambda$-modules, where each $P_i$ is projective.  Viewed as a
sequence of right $\Lambda$-modules, this sequence splits.  The
complex
\[
\eta \tl A \colon
0 \to
\Lambda \tl A \to
E \tl A \to
P_n \tl A \to
\cdots \to
P_0 \tl A \to
\Lambda \tl A \to
0
\]
is therefore an exact sequence.  By composition with the isomorphism
$\mu_A\colon \Lambda \tl A \to A$ and its inverse, we get an extension
\[
\mu_A \extcomp (\eta \tl A) \extcomp \mu_A^{-1}
\colon
0 \to
A \to
E \tl A \to
P_n \tl A \to
\cdots \to
P_0 \tl A \to
A \to
0
\]
of $A$ by itself, and thus a representative of a homogeneous element
in the extension ring $\extring{\Lambda}{A}$.  The map $\varphi_A$ is
defined by the action
\[
\varphi_A([\eta]) = [\mu_A \extcomp (\eta \tl A) \extcomp \mu_A^{-1}]
\]
on homogeneous elements.  By the map~$\varphi_A$, the graded ring
$\extring{\Lambda}{A}$ becomes a graded $\HH*(\Lambda)$-module.

\begin{defn}
Let $\Lambda$ be a finite-dimensional $k$-algebra.  We say that
$\Lambda$ satisfies the \fgtext{(Fg)} condition if the following holds.
\begin{enumerate}
\item The ring $\HH*(\Lambda)$ is Noetherian.
\item The $\HH*(\Lambda)$-module $\extring{\Lambda}{\Lambda/\rad
  \Lambda}$ is finitely generated.  (The module structure is given by
the map $\varphi_{\Lambda/\rad \Lambda}$, as described above.)
\end{enumerate}
\end{defn}

By~\cite[Proposition~5.7]{varieties-survey}, the \fgtext{(Fg)} condition
as defined here is equivalent to the combination of the conditions
\fgtext{Fg1} and~\fgtext{Fg2} defined in~\cite{ehsst}.

The following result describes why Gorenstein algebras are important
in connection with the \fgtext{(Fg)} condition.

\begin{thm}\cite[Theorem~1.5~(a)]{ehsst}
\label{thm:fg=>gorenstein}
If an algebra satisfies the \fgtext{(Fg)}~condition, then it is a
Gorenstein algebra.
\end{thm}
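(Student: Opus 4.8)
The plan is to establish the two conditions defining Gorensteinness, $\id_\Lambda\Lambda < \infty$ and $\id_{\opposite\Lambda}\Lambda < \infty$, and by symmetry I concentrate on the first. Write $S = \Lambda/\rad\Lambda$ and $R = \HH{*}(\Lambda)$. The reduction I have in mind is: it is enough to show that $\Ext_\Lambda^n(S,\Lambda) = 0$ for all sufficiently large $n$, because this forces $\id_\Lambda\Lambda < \infty$ (I return to this at the end). The symmetric statement over $\opposite\Lambda$ is available because $\opposite\Lambda$ again satisfies \fgtext{(Fg)}: indeed $\HH{*}(\opposite\Lambda) \iso \HH{*}(\Lambda)$, and the $k$-duality $\Hom_k(-,k)$ carries $\Ext_\Lambda^*(S,S)$ onto $\Ext_{\opposite\Lambda}^*(\opposite\Lambda/\rad\opposite\Lambda, \opposite\Lambda/\rad\opposite\Lambda)$ compatibly with the Hochschild action, so that condition~(2) transfers.

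First I would upgrade condition~(2), which is a statement about $\Ext_\Lambda^*(S,S)$, to the assertion that $\Ext_\Lambda^*(S,\Lambda)$ is a finitely generated graded $R$-module. One route is to use the equivalent form of \fgtext{(Fg)} in which $\Ext_\Lambda^*(M,M)$ is finitely generated over $R$ for \emph{every} finitely generated module $M$ (cf.\ \cite{ehsst, varieties-survey}), applied to $M = S \dsum \Lambda$: then $\Ext_\Lambda^*(S,\Lambda)$ appears as an $R$-module direct summand of $\Ext_\Lambda^*(M,M)$, hence is finitely generated. Alternatively one argues directly, using only condition~(2) as stated: the radical filtration of $\Lambda$ has all its subquotients in $\add S$, and finitely generated graded modules over the Noetherian ring $R$ are closed under submodules, quotients and extensions, so a short d\'evissage through the long exact sequences of $\Ext_\Lambda^*(S,-)$ suffices.

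The heart of the matter --- and the step I expect to be the main obstacle --- is to show that $\Ext_\Lambda^*(S,\Lambda)$ is annihilated by $R_{\ge 1}$, the ideal of elements of positive degree. Here I would use that, for a homogeneous class $z$, the action of $z$ on $\Ext_\Lambda^*(S,\Lambda)$ (by Yoneda precomposition with $\varphi_S(z)$) agrees up to sign with Yoneda postcomposition by $\varphi_\Lambda(z) \in \Ext_\Lambda^*(\Lambda,\Lambda)$ --- the standard compatibility between the two natural $\HH{*}(\Lambda)$-actions on an $\Ext$ bimodule. Since $\Lambda$ is projective, $\Ext_\Lambda^n(\Lambda,\Lambda) = 0$ for all $n > 0$, so $\varphi_\Lambda$ kills $R_{\ge 1}$, and therefore so does the action on $\Ext_\Lambda^*(S,\Lambda)$. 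But a finitely generated graded $R$-module annihilated by $R_{\ge 1}$ is generated over $R/R_{\ge 1} \iso \HH{0}(\Lambda)$ by finitely many homogeneous elements, hence lives in finitely many degrees; so $\Ext_\Lambda^n(S,\Lambda) = 0$ for every $n$ beyond some bound $N$.

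It remains to close the reduction: $\Ext_\Lambda^n(S,\Lambda) = 0$ for $n > N$ implies $\id_\Lambda\Lambda \le N$. Take a truncated injective coresolution $0 \to \Lambda \to I^0 \to \cdots \to I^{N-1} \to C \to 0$; dimension shifting along the associated short exact sequences gives $\Ext_\Lambda^1(S, C) \iso \Ext_\Lambda^{N+1}(S, \Lambda) = 0$, so $\Ext_\Lambda^1(-, C)$ vanishes on every simple module, hence --- by d\'evissage on composition length together with the Baer criterion --- on all of $\fmod\Lambda$; thus $C$ is injective and $\id_\Lambda\Lambda \le N$. Running the identical argument over $\opposite\Lambda$ gives $\id_{\opposite\Lambda}\Lambda < \infty$ as well, so $\Lambda$ is Gorenstein. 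Apart from the compatibility of the two Hochschild actions invoked in the third paragraph, every step is routine work with long exact sequences under the Noetherian hypothesis.
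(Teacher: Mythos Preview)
The paper does not prove this theorem at all: it is quoted from \cite[Theorem~1.5~(a)]{ehsst} and used as a black box, so there is no ``paper's own proof'' to compare against.

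Your argument is essentially the standard proof from the original source, and it is correct. The key step --- that the $\HH{*}(\Lambda)$-action on $\Ext_\Lambda^*(S,\Lambda)$ factors (up to sign) through $\varphi_\Lambda$, hence through $\Ext_\Lambda^*(\Lambda,\Lambda)$, which vanishes in positive degrees --- is exactly the idea used in \cite{ehsst}; the compatibility of the two Hochschild actions that you flag as the one nontrivial input is indeed the point (it is recorded, for instance, in \cite{ss}). Two minor comments: the ``Baer criterion'' is not really what you are invoking in the last step --- over a finite-dimensional algebra a module $C$ is injective as soon as $\Ext_\Lambda^1(T,C)=0$ for all simple $T$, which is a direct d\'evissage on composition length, as you in fact describe; and in the symmetry paragraph the duality $D=\Hom_k(-,k)$ sends $S$ to a module that additively generates the same category as $\opposite\Lambda/\rad\opposite\Lambda$ rather than being literally equal to it, so a word about $\add$-closure (or a reduction to a basic algebra via Morita invariance of \fgtext{(Fg)}) would make that passage watertight. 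Neither point affects the validity of the overall argument.
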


Our aim is to show that if two Gorenstein $k$-algebras are singularly
equivalent of Morita type with level, then the \fgtext{(Fg)} condition
holds for one of the algebras if and only if it holds for the other.
We use the following result, which describes a relation between two
algebras ensuring that \fgtext{(Fg)} for one of the algebras implies
\fgtext{(Fg)} for the other.

\begin{prop}
\label{prop:fg}
Let $\Lambda$ and~$\Sigma$ be finite-dimensional $k$-algebras.  Let $A
= \Lambda/\rad \Lambda$, and assume that we have a commutative diagram
\begin{equation}
\begin{gathered}
\label{eqn:fg-diagram}
\xymatrix@C=5em{
\HH{>d}(\Lambda)
\ar[r]^{\varphi_A}
\ar[d]^{f}_{\iso}
&
\extring[>d]{\Lambda}{A}
\ar[d]_{g}^{\iso}
\\
\HH{>d}(\Sigma)
\ar[r]^{\varphi_B}
&
\extring[>d]{\Sigma}{B}
}
\end{gathered}
\end{equation}
of graded rngs, for some $\Sigma$-module $B$ and some positive
integer~$d$, where the vertical maps $f$ and~$g$ are isomorphisms.
Assume that $\Sigma$ satisfies the \fgtext{(Fg)} condition.  Then
$\Lambda$ also satisfies~\fgtext{(Fg)}.
\end{prop}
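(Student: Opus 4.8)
The strategy is to read off both clauses of the \fgtext{(Fg)} condition for $\Lambda$ from the corresponding facts for $\Sigma$, using the principle that these conditions are insensitive to a finite-dimensional ``initial segment'' of the graded rings and modules involved. Write $R = \HH*(\Lambda)$, $S = \HH*(\Sigma)$, regard $M = \extring{\Lambda}{A}$ as a graded $R$-module via $\varphi_A$, and regard $N = \extring{\Sigma}{B}$ as a graded $S$-module via $\varphi_B$. Since $\Lambda$ and $\Sigma$ are finite-dimensional over $k$ and $A$, $B$ are finitely generated, each graded component of $R$, $S$, $M$, $N$ is finite-dimensional over $k$; in particular the truncations $R_{\le d}$, $S_{\le d}$, $M_{\le d}$, $N_{\le d}$ are finite-dimensional, and the degree-zero parts $R_0 = \HH{0}(\Lambda)$ and $S_0 = \HH{0}(\Sigma)$ are finite-dimensional, hence Noetherian. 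Finally, since $\Sigma$ satisfies \fgtext{(Fg)}, the ring $S$ is Noetherian, and --- by the standard fact that \fgtext{(Fg)} forces $\Ext_\Sigma^*(V,V)$ to be a finitely generated $\HH*(\Sigma)$-module for \emph{every} finitely generated $\Sigma$-module $V$ (see \cite{ehsst}) --- the $S$-module $N = \extring{\Sigma}{B}$ is finitely generated.

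The backbone of the argument consists of two elementary ``eventual'' lemmas. First: for a graded-commutative ring $C = \Dsum_{n \ge 0} C_n$ with $C_{\le d}$ finite-dimensional over $k$, the ring $C$ is Noetherian if and only if the graded rng $C_{>d}$ is finitely generated as a (non-unital) $k$-algebra. Indeed, if $C$ is Noetherian then the graded ideal $C_{>d}$ is generated by finitely many homogeneous elements $x_i$ of degrees $> d$, bounded by some $D$; using that $C_n = \sum_i C_{n - \deg x_i}\, x_i$ for $n > d$, an induction on $n$ shows that a $k$-basis of $C_{d+1} \dsum \cdots \dsum C_{d+D}$ generates $C_{>d}$ as a rng. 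Conversely, rng-generators of $C_{>d}$ together with a $k$-basis of $C_{\le d}$ generate $C$ as a $k$-algebra, so $C$ is Noetherian by Hilbert's basis theorem. The second lemma, proved by the same kind of degree induction: if moreover $C$ is Noetherian and $W$ is a graded $C$-module with $W_{\le d}$ finite-dimensional, then $W$ is finitely generated over $C$ if and only if $W_{>d}$ is finitely generated over the rng $C_{>d}$.

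Granting these, the proof is purely formal. Applying the first lemma to the Noetherian ring $S$ shows that the rng $S_{>d}$ is finitely generated; transporting this along the rng isomorphism $f \colon R_{>d} = \HH{>d}(\Lambda) \to \HH{>d}(\Sigma) = S_{>d}$ shows that $R_{>d}$ is finitely generated as a rng, and the first lemma applied to $R$ then gives that $R = \HH*(\Lambda)$ is Noetherian --- clause~(1) of \fgtext{(Fg)} for $\Lambda$. For clause~(2), the second lemma applied to $S$ and $N$ shows $N_{>d}$ is finitely generated over $S_{>d}$. Commutativity of diagram~\eqref{eqn:fg-diagram} says precisely that $g$ is semilinear over $f$: for $r \in R_{>d}$ and $m \in M_{>d}$ one has $g(\varphi_A(r) \cdot m) = g(\varphi_A(r)) \cdot g(m) = \varphi_B(f(r)) \cdot g(m)$, since $g$ is a rng homomorphism. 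Hence the isomorphism $g \colon M_{>d} \to N_{>d}$ carries finite generation back, so $M_{>d}$ is finitely generated over $R_{>d}$; as $R$ is now known to be Noetherian, the second lemma gives that $M = \extring{\Lambda}{A}$ is finitely generated over $R = \HH*(\Lambda)$, which is clause~(2) since $A = \Lambda/\rad\Lambda$ and the module structure is via $\varphi_A$. The only place where genuine work is hidden is the two eventual lemmas --- the bookkeeping that produces a finite rng-generating set for $C_{>d}$ (resp.\ $W_{>d}$) out of a finite ideal- (resp.\ module-) generating set together with the finite-dimensional initial segment; everything else is transport of structure along $f$ and $g$.
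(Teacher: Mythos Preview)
Your argument is correct. The two ``eventual lemmas'' you isolate are exactly the right tools: they reduce Noetherianity of $\HH*(-)$ and finite generation of $\extring{-}{-}$ to statements about the truncated rngs $\HH{>d}(-)$ and $\extring[>d]{-}{-}$, which are then transported along the isomorphisms $f$ and~$g$ via the semilinearity you extract from commutativity of diagram~\eqref{eqn:fg-diagram}. Your appeal to the fact that \fgtext{(Fg)} for~$\Sigma$ forces $\extring{\Sigma}{B}$ to be finitely generated for \emph{every} finitely generated $B$ (not just $\Sigma/\rad\Sigma$) is necessary here and correctly invoked. One small remark: in the converse of your first lemma, ``Hilbert's basis theorem'' strictly applies to commutative algebras, so for graded-commutative $\HH*(-)$ one needs the standard maneuver of passing to the even-degree commutative subalgebra, over which the whole ring is a finite module; this is routine.

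The paper's own proof, by contrast, is a single sentence citing Proposition~6.3 of~\cite{pss}. So you have supplied a self-contained argument where the paper delegates entirely to an external reference. What you have written is essentially the content one would expect to find behind that citation --- the key insight being that \fgtext{(Fg)} is an asymptotic condition, insensitive to any finite-dimensional initial segment of the graded objects involved. Your approach buys transparency and independence from~\cite{pss}; the paper's approach buys brevity.
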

\begin{proof}
This follows from Proposition~6.3 in~\cite{pss}.
\end{proof}

We are now ready to prove the main result of this paper.

\begin{thm}
\label{thm:main}
Let $\Lambda$ and~$\Sigma$ be finite-dimensional Gorenstein algebras
over the field~$k$.  Assume that $\Lambda$ and~$\Sigma$ are singularly
equivalent of Morita type with level.  Then $\Lambda$ satisfies
\fgtext{(Fg)} if and only if $\Sigma$ satisfies \fgtext{(Fg)}.
\end{thm}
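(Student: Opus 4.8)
The statement is symmetric in $\Lambda$ and~$\Sigma$: if $\bimod{\Lambda}{M}{\Sigma}$ and~$\bimod{\Sigma}{N}{\Lambda}$ induce a singular equivalence of Morita type with level between $\Lambda$ and~$\Sigma$, then $\bimod{\Sigma}{N}{\Lambda}$ and~$\bimod{\Lambda}{M}{\Sigma}$ induce one between $\Sigma$ and~$\Lambda$, and the hypothesis that both algebras are Gorenstein is symmetric as well. It therefore suffices to prove one of the two implications, say that \fgtext{(Fg)} for~$\Sigma$ implies \fgtext{(Fg)} for~$\Lambda$; the other follows by interchanging the roles of $\Lambda$ and~$\Sigma$ (and of $M$ and~$N$). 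For this implication the plan is to construct a commutative diagram as in Proposition~\ref{prop:fg} and then invoke that proposition.

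By Remark~\ref{rem:hh-iso-level} we may assume that the level~$l$ is at least~$1$; fix bimodules $M$ and~$N$ inducing the equivalence with level~$l$, put $d = \max\{\, l,\; 2\id_\Lambda\Lambda,\; 2\id_\Sigma\Sigma \,\}$ (so that $d$ is positive and $d > \max\{\id_\Lambda\Lambda, \id_\Sigma\Sigma\}$), and let $A = \Lambda/\rad\Lambda$ and $B = N \tl A$. I would take $g \colon \extring[>d]{\Lambda}{A} \to \extring[>d]{\Sigma}{B}$ to be the map induced on extension groups by the exact functor $N \tl -$, as in~\eqref{eq:ext-map}. Since $N \tl -$ is exact it sends Yoneda products to Yoneda products, so $g$ is a homomorphism of graded rngs; and $g$ is bijective in every degree $> d$ by Proposition~\ref{prop:ext-iso}, hence an isomorphism of graded rngs. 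For $f \colon \HH{>d}(\Lambda) \to \HH{>d}(\Sigma)$ I would take the isomorphism of graded rngs supplied by Theorem~\ref{thm:hh-iso}, concretely $f = (\rho'_l)^{-1} \fcomp (N \tl - \tl M)$, where $N \tl - \tl M \colon \HH{>d}(\Lambda) \to \extring[>d]{\e\Sigma}{N \tl \Lambda \tl M}$ and $\rho'_l \colon \HH{>d}(\Sigma) \to \extring[>d]{\e\Sigma}{N \tl \Lambda \tl M}$ are the isomorphisms of that theorem (the rotation map $\rho'_l$ exists because $N \tl \Lambda \tl M \iso N \tl M$ is an $l$-th syzygy of~$\Sigma$ over~$\e\Sigma$).

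The heart of the proof is then to check that the resulting square~\eqref{eqn:fg-diagram} commutes, that is, that $g \fcomp \varphi_A = \varphi_B \fcomp f$. Unwinding the construction of~$\varphi$, for a homogeneous class $[\eta] \in \HH{n}(\Lambda)$ with $n > d$ the image $g(\varphi_A[\eta])$ is the class of the $\Sigma$-extension $N \tl \eta \tl A$ of $N \tl A$ by itself, while $\varphi_B(f[\eta])$ is the class of $f[\eta] \ts (N \tl A)$. I would establish the equality by running the argument of Section~\ref{sec:hh} with the $\Lambda$-module~$A$ in place of the bimodule~$\Lambda$: tensoring~$\eta$ first by the functor $N \tl - \tl M$ and then by $N \tl A$ over~$\Sigma$ produces $N \tl \eta \tl (M \ts N) \tl A$, which is the $(N \tl -)$-image of the extension obtained from $\eta \tl A$ by tensoring with the $l$-th syzygy $M \ts N$ of~$\Lambda$; and a $3 \times 3$-splice computation in the style of Lemma~\ref{lem:tensor-syzygy} identifies this, up to the sign $(-1)^{ln}$, with the $l$-th rotation of $N \tl \eta \tl A$, the same sign occurring when $\rho'_l$ is compared with tensoring by $N \tl M$. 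Since the relevant rotation map is injective in degree~$n > d \ge \id_\Sigma\Sigma$ by Lemma~\ref{lem:rotation}, chasing these identities forces $g(\varphi_A[\eta]) = \varphi_B(f[\eta])$. I expect this compatibility verification to be the main obstacle: it demands careful bookkeeping of syzygies and of the signs coming from Lemma~\ref{lem:3x3-splice}, but it uses no idea beyond those already developed in Sections~\ref{sec:rot} and~\ref{sec:hh}.

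Once the commutativity of~\eqref{eqn:fg-diagram} is in hand, Proposition~\ref{prop:fg} gives that $\Lambda$ satisfies \fgtext{(Fg)}; applying the same construction with $\Lambda$ and~$\Sigma$ (and $M$ and~$N$) interchanged yields the converse implication, and hence the stated equivalence.
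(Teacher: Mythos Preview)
Your strategy is the same as the paper's: build a commutative square as in Proposition~\ref{prop:fg}, with the vertical isomorphisms coming from Theorem~\ref{thm:hh-iso} and Proposition~\ref{prop:ext-iso}, and then invoke Proposition~\ref{prop:fg}. The difference lies in the particular choice of $f$ and~$g$, and this choice matters a great deal for how hard the commutativity check becomes.

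You take $g = N \tl -$ and $f = (\rho'_l)^{-1} \fcomp (N \tl - \tl M)$, i.e.\ you pass through the $\e\Sigma$ side of the square in Theorem~\ref{thm:hh-iso}. The paper instead passes through the $\e\Lambda$ side: it takes $f = (M \ts - \ts N)^{-1} \fcomp \rho_l$, and for $g$ it does not use $N \tl -$ directly but the composite $(M \ts -)^{-1} \fcomp \rho'_l$ (here $\rho'_l$ is the rotation on $\extring[>d]{\Lambda}{\Lambda \tl A}$ with respect to $\pi \tl A$, \emph{not} the rotation in Theorem~\ref{thm:hh-iso}). With that choice the large diagram decomposes into small squares each of which commutes for trivial reasons: the square involving $\rho_l$ and~$\rho'_l$ commutes because the rotation map is natural in the functor $- \tl A$ (applied to the same resolution~$\pi$), and the square involving $M \ts - \ts N$ and $M \ts -$ commutes by bare associativity of tensor products, $(M \ts \zeta \ts N) \tl A = M \ts (\zeta \ts N \tl A)$. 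No $3\times 3$ splice, no sign-tracking, no comparison of rotations against tensoring with syzygies is needed.

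By contrast, your route forces you into exactly the computation you flag as ``the main obstacle''. Concretely, after using the naturality of rotation under $- \ts (N \tl A)$ you are reduced to proving
\[
\tilde\rho_l\bigl([N \tl \eta \tl A]\bigr) \;=\; [\,N \tl \eta \tl M \ts N \tl A\,],
\]
where $\tilde\rho_l$ is the rotation on $\extring[>d]{\Sigma}{N \tl A}$ induced from the \emph{$\e\Sigma$-resolution}~$\pi'$ of~$\Sigma$ via $- \ts (N \tl A)$. Your $3\times 3$ splice argument, however, is set up on the \emph{$\e\Lambda$}-side (tensoring $\eta$ with the truncation of~$\pi$) and therefore relates $[N \tl \eta \tl M \ts N \tl A]$ to the rotation with respect to the resolution $N \tl \pi \tl A$ of~$N \tl A$. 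These two resolutions share the same $l$-th syzygy $N \tl M \ts N \tl A$, but rotation maps with respect to different resolutions having the same syzygy are not equal on the nose; they differ by an automorphism. Your sketch does not address this discrepancy, and the cancellation of the two $(-1)^{ln}$'s alone does not close the gap. I believe the argument can be completed, but it is genuinely more delicate than you indicate, and the paper's choice of $f$ and~$g$ sidesteps the whole issue.
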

\begin{proof}
We show that if $\Sigma$ satisfies \fgtext{(Fg)}, then $\Lambda$
satisfies \fgtext{(Fg)}.  The opposite implication then follows by
symmetry.  Let $\bimod{\Lambda}{M}{\Sigma}$
and~$\bimod{\Sigma}{N}{\Lambda}$ be bimodules which induce a singular
equivalence of Morita type with level~$l \ge 1$ (see
Remark~\ref{rem:hh-iso-level}) between $\Lambda$ and~$\Sigma$.  Let $d
= \max \{ l, 2 \cdot \id_\Lambda \Lambda, 2 \cdot \id_\Sigma \Sigma
\}$.  Let $A$ be the $\Lambda$-module $\Lambda/\rad \Lambda$.

The $\e\Lambda$-module $M \ts \Sigma \ts N \iso M \ts N$ is an $l$-th
syzygy of~$\Lambda$ as $\e\Lambda$-module.  Let $\pi$ be a projective
resolution of~$\Lambda$ with $M \ts \Sigma \ts N$ as the $l$-th
syzygy.  Then the complex $\pi \tl A$ is a projective resolution of
$\Lambda \tl A$, with $M \ts \Sigma \ts N \tl A$ as the $l$-th syzygy.
We construct the commutative diagram in
Figure~\ref{fig:hamburger-diagram}, where the maps $\rho_l$ and
$\rho'_l$ are the $l$-th rotation maps with respect to the resolutions
$\pi$ and $\pi \tl A$, respectively.  These maps are isomorphisms by
Lemma~\ref{lem:rotation}.  The map $M \ts - \ts N$ in the diagram is
an isomorphism by Theorem~\ref{thm:hh-iso}, and the map $M \ts -$ is
an isomorphism by Proposition~\ref{prop:ext-iso}.  The isomorphisms
$f$ and~$g$ are defined to be the appropriate compositions of the
other isomorphisms in the diagram.  By Proposition~\ref{prop:fg}, this
diagram shows that if the algebra $\Sigma$ satisfies \fgtext{(Fg)}, then
$\Lambda$ also satisfies \fgtext{(Fg)}.
\begin{figure}
\[
\xymatrix{
&
\HH{>d}(\Lambda)
\ar[r]^{- \tl A}
\ar[d]_{\rho_l}^{\iso}
\ar@(u,u)[rr]^{\varphi_A}
\ar@{-->} `l[dl] `[dd]_{f}^{\iso} [dd]
&
\extring[>d]{\Lambda}{\Lambda \tl A}
\ar[r]^{\iso}
\ar[d]_{\rho'_l}^{\iso}
&
\extring[>d]{\Lambda}{A}
\ar@{-->}[dd]^{g}_{\iso}
\\
&
\extring[>d]{\e\Lambda}{M \ts \Sigma \ts N}
\ar[r]^-{- \tl A}
&
\extring[>d]{\Lambda}{M \ts \Sigma \ts N \tl A}
\\
&
\HH{>d}(\Sigma)
\ar[u]^{M \ts - \ts N}_{\iso}
\ar[r]_-{- \ts (N \tl A)}
\ar@(d,d)[rr]_{\varphi_{N \ts A}}
&
\extring[>d]{\Sigma}{\Sigma \ts N \tl A}
\ar[r]_{\iso}
\ar[u]^{M \ts -}_{\iso}
&
\extring[>d]{\Sigma}{N \tl A}
}
\]
\caption{Commutative diagram used in the proof of
  Theorem~\ref{thm:main}.}
\label{fig:hamburger-diagram}
\end{figure}
\end{proof}

We now show that the assumption of both algebras being Gorenstein is
necessary in the above theorem.  Example~5.5 in~\cite{pss} contains
two singularly equivalent algebras where one algebra satisfies
\fgtext{(Fg)} and the other is not Gorenstein.  We use the same
algebras, and show that there exists a singular equivalence of Morita
type with level between them.

\begin{ex}
\label{ex:fg-not-preserved}
Let $\Lambda = kQ/\langle\rho\rangle$ and $\Sigma =
kR/\langle\sigma\rangle$ be $k$-algebras given by the following
quivers and relations:
\[
\setlength\arraycolsep{1em}
\begin{array}{ll}
Q \colon
\xymatrix{
1 \ar@(ul,dl)_{\alpha} \ar[r]^{\beta} &
2
}
&
\rho = \{ \alpha^2, \beta\alpha \} \\[1.5em]
R \colon
\xymatrix{
3 \ar@(ul,dl)_{\gamma}
}
&
\sigma = \{ \gamma^2 \}
\end{array}
\]
The tensor algebra $\Lambda \tk \opposite{\Sigma}$ has the following
quiver and relations:
\[
Q \times \opposite{R} \colon
\vcenter{
\xymatrix{
1 \times \opposite{3}
\ar@(ul,ur)^{\alpha \times \opposite{3}}
\ar@(ur,dr)^{1 \times \opposite\gamma}
\ar[d]^{\beta \times \opposite{3}} \\
2 \times \opposite{3}
\ar@(ur,dr)^{2 \times \opposite\gamma}
}}
\qquad
\mbox{\scriptsize
$\begin{Bmatrix}
(\alpha \times \opposite{3})^2,
(\beta \times \opposite{3})(\alpha \times \opposite{3}), \\
(1 \times \opposite{\gamma})^2,
(2 \times \opposite{\gamma})^2, \\
(\alpha \times \opposite{3})(1 \times \opposite{\gamma})
- (1 \times \opposite{\gamma})(\alpha \times \opposite{3}), \\
(\beta \times \opposite{3})(1 \times \opposite{\gamma})
- (2 \times \opposite{\gamma})(\beta \times \opposite{3})
\end{Bmatrix}$
}
\]
The tensor algebra $\Sigma \tk \opposite{\Lambda}$ has the following
quiver and relations:
\[
R \times \opposite{Q} \colon
\xymatrix{
3 \times \opposite{1}
\ar@(ul,dl)_{3 \times \opposite{\alpha}}
\ar@(dl,dr)_{\gamma \times \opposite{1}} &
3 \times \opposite{2}
\ar[l]_{3 \times \opposite{\beta}}
\ar@(dl,dr)_{\gamma \times \opposite{2}}
}
\qquad
\mbox{\scriptsize
$\begin{Bmatrix}
(3 \times \opposite{\alpha})^2,
(3 \times \opposite\alpha)(3 \times \opposite\beta), \\
(\gamma \times \opposite{1})^2,
(\gamma \times \opposite{2})^2, \\
(3 \times \opposite{\alpha})(\gamma \times \opposite{1})
- (\gamma \times \opposite{1})(3 \times \opposite{\alpha}), \\
(3 \times \opposite{\beta})(\gamma \times \opposite{2})
- (\gamma \times \opposite{1})(3 \times \opposite{\beta})
\end{Bmatrix}$
}
\]
Let $\bimod{\Lambda}{M}{\Sigma}$ and~$\bimod{\Sigma}{N}{\Lambda}$ be
bimodules given by the following representations over $Q \times
\opposite{R}$ and $R \times \opposite{Q}$, respectively:
\[
M \colon
\vcenter{
\xymatrix{
k^2
\ar@(ul,ur)^{\left(\begin{smallmatrix} 0 & 0 \\ 1 & 0 \end{smallmatrix}\right)}
\ar@(ur,dr)^{\left(\begin{smallmatrix} 0 & 0 \\ 1 & 0 \end{smallmatrix}\right)}
\ar[d]_{\left(\begin{smallmatrix} 0 & 0 \\ 1 & 0 \end{smallmatrix}\right)} \\
k^2
\ar@(ur,dr)^{\left(\begin{smallmatrix} 0 & 0 \\ 1 & 0 \end{smallmatrix}\right)}
}}
\qquad
N \colon
\xymatrix{
k^2
\ar@(ul,dl)_{\left(\begin{smallmatrix} 0 & 0 \\ 1 & 0 \end{smallmatrix}\right)}
\ar@(dl,dr)_{\left(\begin{smallmatrix} 0 & 0 \\ 1 & 0 \end{smallmatrix}\right)} &
0
\ar[l]
\ar@(dl,dr)
}
\]
We show that the bimodules $M$ and~$N$ induce a singular equivalence
of Morita type with level~$1$ between the algebras $\Lambda$
and~$\Sigma$.  We first check that these bimodules satisfy the first
two conditions in the definition.  Considering only the left or right
structure of $M$ and~$N$, we have the following four isomorphisms:
\begin{align*}
\leftmod{\Lambda}{M} &\iso \Lambda &
\rightmod{N}{\Lambda} &\iso e_2 \Lambda \\
\leftmod{\Sigma}{N} &\iso \Sigma^2 &
\rightmod{M}{\Sigma} &\iso \Sigma
\end{align*}
Thus the bimodules $M$ and~$N$ are projective when viewed as one-sided
(left or right) modules.

To check the last two conditions in the definition of singular
equivalence of Morita type with level, we compute the tensor products
$M \ts N$ and $N \tl M$ as representations of quivers, and check that
they are syzygies of $\Lambda$ and~$\Sigma$, respectively.  The
enveloping algebra $\e\Lambda$ has the following quiver and relations:
\[
Q \times \opposite{Q} \colon
\vcenter{
\xymatrix{
1 \times \opposite{1}
\ar@(ul,ur)^{\alpha \times \opposite{1}}
\ar@(ul,dl)_{1 \times \opposite\alpha}
\ar[d]^{\beta \times \opposite{1}}
&
1 \times \opposite{2}
\ar@(ul,ur)^{\alpha \times \opposite{2}}
\ar[l]_{1 \times \opposite\beta}
\ar[d]^{\beta \times \opposite{2}}
\\
2 \times \opposite{1}
\ar@(ul,dl)_{2 \times \opposite\alpha}
&
2 \times \opposite{2}
\ar[l]_{2 \times \opposite\beta}
}}
\qquad
\mbox{\scriptsize
$\begin{Bmatrix}
(\alpha \times \opposite{1})^2,
(\beta \times \opposite{1})(\alpha \times \opposite{1}), \\
(\alpha \times \opposite{2})^2,
(\beta \times \opposite{2})(\alpha \times \opposite{2}), \\
(1 \times \opposite\alpha)^2,
(1 \times \opposite\alpha)(1 \times \opposite\beta), \\
(2 \times \opposite\alpha)^2,
(2 \times \opposite\alpha)(2 \times \opposite\beta), \\
(\alpha \times \opposite{1})(1 \times \opposite\alpha)
- (1 \times \opposite\alpha)(\alpha \times \opposite{1}), \\
(\alpha \times \opposite{1})(1 \times \opposite\beta)
- (1 \times \opposite\beta)(\alpha \times \opposite{2}), \\
(\beta \times \opposite{1})(1 \times \opposite\alpha)
- (2 \times \opposite\alpha)(\beta \times \opposite{1}), \\
(\beta \times \opposite{1})(1 \times \opposite\beta)
- (2 \times \opposite\beta)(\beta \times \opposite{2})
\end{Bmatrix}$
}
\]
The tensor product $M \ts N$ is the $\e\Lambda$-module given by the
following representation over $Q \times \opposite{Q}$:
\[
M \ts N \colon
\vcenter{
\xymatrix{
k^2
\ar@(ul,ur)^{\left(\begin{smallmatrix} 0 & 0 \\ 1 & 0 \end{smallmatrix}\right)}
\ar@(ul,dl)_{\left(\begin{smallmatrix} 0 & 0 \\ 1 & 0 \end{smallmatrix}\right)}
\ar[d]_{\left(\begin{smallmatrix} 0 & 0 \\ 1 & 0 \end{smallmatrix}\right)}
&
0
\ar[l]
\ar[d]
\ar@(ul,ur)
\\
k^2
\ar@(ul,dl)_{\left(\begin{smallmatrix} 0 & 0 \\ 1 & 0 \end{smallmatrix}\right)}
&
0
\ar[l]
}}
\]
The algebra $\Lambda$ considered as a $\e\Lambda$-module has the
following representation over $Q \times \opposite{Q}$:
\[
\Lambda \colon
\vcenter{
\xymatrix{
k^2
\ar@(ul,ur)^{\left(\begin{smallmatrix} 0 & 0 \\ 1 & 0 \end{smallmatrix}\right)}
\ar@(ul,dl)_{\left(\begin{smallmatrix} 0 & 0 \\ 1 & 0 \end{smallmatrix}\right)}
\ar[d]_{\left(\begin{smallmatrix} 1 & 0 \end{smallmatrix}\right)}
&
0
\ar[l]
\ar[d]
\ar@(ul,ur)
\\
k
\ar@(ul,dl)_{0}
&
k
\ar[l]^{1}
}}
\]
There is an exact sequence
\[
0
\to M \ts N
\to \e\Lambda e_{1 \times \opposite{1}} \dsum \e\Lambda e_{2 \times \opposite{2}}
\to \Lambda
\to 0
\]
of $\e\Lambda$-modules, and thus $M \ts N$ is a first syzygy of
$\Lambda$.

The enveloping algebra $\e\Sigma$ has the following quiver and
relations:
\[
R \times \opposite{R} \colon
\vcenter{
\xymatrix{
3 \times \opposite{3}
\ar@(dl,dr)_{\gamma \times \opposite{3}}
\ar@(ul,dl)_{3 \times \opposite\gamma}
}}
\qquad
\mbox{\scriptsize
$\begin{Bmatrix}
(\gamma \times \opposite{3})^2,
(3 \times \opposite\gamma)^2, \\
(\gamma \times \opposite{3})(3 \times \opposite\gamma)
- (3 \times \opposite\gamma)(\gamma \times \opposite{3})
\end{Bmatrix}$
}
\]
The algebra $\Sigma$ considered as a $\e\Sigma$-module has the
following representation over $R \times \opposite{R}$:
\[
\Sigma \colon
\xymatrix{
k^2
\ar@(ul,dl)_{\left(\begin{smallmatrix} 0 & 0 \\ 1 & 0 \end{smallmatrix}\right)}
\ar@(dl,dr)_{\left(\begin{smallmatrix} 0 & 0 \\ 1 & 0 \end{smallmatrix}\right)}
}
\]
Its minimal projective resolution is
\[
\cdots \to \e\Sigma \to \e\Sigma \to \Sigma \to 0,
\]
with $\Sigma$ itself as every syzygy.  The tensor product $N \tl M$ is
isomorphic to $\Sigma$ as $\e\Sigma$-module; in particular, it is a
first syzygy of $\Sigma$.

We have now shown that the bimodules $M$ and~$N$ induce a singular
equivalence of Morita type with level~$1$ between the algebras
$\Lambda$ and~$\Sigma$.  The algebra $\Sigma$ satisfies the
\fgtext{(Fg)} condition, but $\Lambda$ does not, and is not even a
Gorenstein algebra.  This shows that the assumption of both algebras
being Gorenstein can not be removed in Theorem~\ref{thm:main}.
\end{ex}

For stable equivalences of Morita type (which are singular
equivalences of Morita type with level~$0$), we can, under some
conditions, remove the assumption of Gorensteinness.

\begin{cor}
\label{cor:stable-equivalence}
Let $\bimod{\Lambda}{M}{\Sigma}$ and~$\bimod{\Sigma}{N}{\Lambda}$ be
indecomposable bimodules that induce a stable equivalence of Morita
type between two finite-dimensional $k$-algebras $\Lambda$
and~$\Sigma$.  Assume that $\Lambda$ and $\Sigma$ have no semisimple
blocks and that $\Lambda/\rad \Lambda$ and $\Sigma/\rad \Sigma$ are
separable.  Then $\Lambda$ satisfies \fgtext{(Fg)} if and only if
$\Sigma$ satisfies \fgtext{(Fg)}.
\end{cor}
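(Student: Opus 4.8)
The plan is to reduce the statement to Theorem~\ref{thm:main}. The only gap is that Theorem~\ref{thm:main} assumes both algebras to be Gorenstein, whereas that is not among the hypotheses here; so the argument splits into a ``Gorenstein transfer'' step and a straightforward application of the main theorem. First I would record that a stable equivalence of Morita type is in particular a singular equivalence of Morita type: the bimodules $X$ and~$Y$ occurring in the definition are projective, hence of finite projective dimension. By Proposition~\ref{prop:semt=>semtl} it is therefore a singular equivalence of Morita type with level, so once both $\Lambda$ and~$\Sigma$ are known to be Gorenstein, Theorem~\ref{thm:main} applies directly and gives the conclusion.

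It thus suffices to prove that $\Lambda$ is Gorenstein if and only if $\Sigma$ is; and since we will ultimately assume that one of the two algebras satisfies \fgtext{(Fg)}, Theorem~\ref{thm:fg=>gorenstein} will make that algebra Gorenstein, so by symmetry (interchanging the roles of $\Lambda$, $\Sigma$ and of $M$, $N$) it is enough to show that $\Sigma$ Gorenstein implies $\Lambda$ Gorenstein. This is exactly where the extra hypotheses in the corollary are used: for a stable equivalence of Morita type given by indecomposable bimodules between algebras with no semisimple blocks whose semisimple quotients are separable, the associated stable equivalence $\stmod \Lambda \equivalence \stmod \Sigma$ commutes with the syzygy functor up to projective summands and carries simple modules to simple modules up to projective summands, and from this the Gorenstein property is known to pass from one algebra to the other. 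I would invoke this as a known fact from the theory of stable equivalences of Morita type rather than reprove it.

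Putting the pieces together: assume $\Sigma$ satisfies \fgtext{(Fg)}. Then $\Sigma$ is Gorenstein by Theorem~\ref{thm:fg=>gorenstein}, hence so is $\Lambda$ by the transfer step, and hence $\Lambda$ and~$\Sigma$ are finite-dimensional Gorenstein algebras that are singularly equivalent of Morita type with level. Theorem~\ref{thm:main} then yields that $\Lambda$ satisfies \fgtext{(Fg)}. The converse implication follows by running the same argument with $\Lambda$ and~$\Sigma$ (and $M$ and~$N$) interchanged.

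The main obstacle is the Gorenstein transfer step; everything else is either a direct quotation of Theorem~\ref{thm:main} or a routine check that the definition of a singular equivalence of Morita type with level is met. In other words, all the ``singular-equivalence content'' of the corollary already lives in Theorem~\ref{thm:main}, and what remains specific to the stable case --- and the reason the separability and block hypotheses appear, even though they are absent from Theorem~\ref{thm:main} --- is precisely the fact that a stable equivalence of Morita type of this kind preserves Gorensteinness.
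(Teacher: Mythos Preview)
Your approach matches the paper's: reduce to Theorem~\ref{thm:main} by first transferring Gorensteinness across the stable equivalence, then apply the main theorem directly. For the transfer step the paper is more specific than your sketch: it uses a result of Dugas and Mart\'{\i}nez-Villa to show that the hypotheses force $(M \ts -, N \tl -)$ and $(N \tl -, M \ts -)$ to be adjoint pairs, and then invokes a result of Liu and Xi stating that under such adjointness $\Lambda$ is Gorenstein if and only if $\Sigma$ is.
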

\begin{proof}
By~\cite[Corollary~3.1~(2)]{dugas-martinez-villa}, the assumptions in
the statement of the result imply that $(M \ts -, N \tl -)$ and $(N
\tl -, M \ts -)$ are adjoint pairs.  Then,
by~\cite[Corollary~4.6]{liu-xi}, it follows that $\Lambda$ is a
Gorenstein algebra if and only if $\Sigma$ is a Gorenstein algebra.
The result now follows from Theorem~\ref{thm:main}.
\end{proof}

\bibliographystyle{alpha}
\bibliography{bibliography}

\end{document}